\documentclass{amsart}
%\usepackage{amsaddr}
%-------------------------
\usepackage[utf8]{inputenc}
\usepackage[dvipsnames]{xcolor}
\usepackage{amsfonts}
\usepackage{amssymb}
\usepackage{graphicx}
\usepackage{tikz}
\usepackage{tikz-cd}
\usetikzlibrary{positioning}
\usetikzlibrary{calc, positioning, fit, shapes.misc}
\usetikzlibrary{shapes.geometric}
\usepackage{fancyhdr}
\usepackage{bbm}
\usepackage{subcaption}
\usepackage[symbol]{footmisc}
\usepackage[colorlinks=true, allcolors=blue,linkcolor=blue, citecolor=blue]{hyperref}

%%% useful spaces
%\newcommand{\R}{\mathbb{R}}
%\newcommand{\F}{\mathbb{F}}
%\newcommand{\K}{\mathbb{K}}
%\newcommand{\Z}{\mathbb{Z}}
%\newcommand{\Set}{\mathrm{\textbf{Set}}}
%\newcommand{\FinSet}{\mathrm{\textbf{FinSet}}}
%\newcommand{\Simp}{\mathrm{\textbf{Simp}}}
%\newcommand{\Ab}{\mathrm{\textbf{Ab}}}
%\newcommand{\Po}{\mathrm{\textbf{Po}}}
%\newcommand{\Hyp}{\mathrm{\textbf{Hyp}}}
%\newcommand{\Rel}{\mathrm{\textbf{Rel}}}
%
%\newcommand{\Agroup}        {\mm{\sf A}}
%\newcommand{\Bgroup}        {\mm{\sf B}}
%\newcommand{\Cgroup}        {\mm{\sf C}}
%\newcommand{\Dgroup}	    {\mathsf{D}}
%\newcommand{\Hgroup}	     {\mathsf{H}}
%\newcommand{\Zgroup}        {\mm{\sf Z}}
%\newcommand{\Fgroup}        {\mm{\sf F}}
%\newcommand{\Ugroup}        {\mm{\sf U}}
%\newcommand{\PH}	     {\mm{\mathsf{PH}}}
%\newcommand{\MCgroup}	     {\mm{\mathsf{MC}}}
%\newcommand{\MH}	     {\mm{\mathsf{MH}}}
%\newcommand{\CH}	     {\mm{\mathsf{CH}}}
%
%\newcommand{\myemph}[1]{\scalebox{.86}[1.0]{\textbf{#1}}}
%
%\renewcommand{\hom}[2]{\Hgroup_{#2}^{#1}}
%\newcommand{\betti}[2]{\beta_{#2}^{#1}}
%\renewcommand{\H}{\mathcal{H}}
%\newcommand{\bary}{\mathcal{B}}
%\newcommand{\missing}{\mathcal{M}}
%\newcommand{\po}{\mathcal{P}}
%\newcommand{\ECP}{\mathcal{O}}
%\newcommand{\pol}{\Gamma}
%\newcommand{\Ord}{Ord}
%
%\DeclareMathOperator{\im}{im}

%% theorem, etc. environments
\newtheorem{theorem}{Theorem}
\newtheorem{definition}[theorem]{Definition}
\newtheorem{example}[theorem]{Example}

\newtheorem{proposition}[theorem]{Proposition}
\newtheorem{lemma}[theorem]{Lemma}

\newcommand{\R}{\mathbb{R}}

\newcommand{\K}{\mathbb{K}}
\newcommand{\Z}{\mathbb{Z}}
\newcommand{\Set}{\mathrm{\textbf{Set}}}
\newcommand{\FinSet}{\mathrm{\textbf{FinSet}}}
\newcommand{\Simp}{\mathrm{\textbf{Simp}}}
\newcommand{\Ab}{\mathrm{\textbf{Ab}}}
\newcommand{\Po}{\mathrm{\textbf{Po}}}
\newcommand{\Hyp}{\mathrm{\textbf{Hyp}}}
\newcommand{\MHyp}{\mathrm{\textbf{MHyp}}}
\newcommand{\SSys}{\mathrm{\textbf{SSys}}}
\newcommand{\Rel}{\mathrm{\textbf{Rel}}}

\newcommand{\Bgroup}        {\mm{\sf B}}
\newcommand{\Cgroup}        {\mm{\sf C}}
\newcommand{\Dgroup}	    {\mathsf{D}}
\newcommand{\Hgroup}	     {\mathsf{H}}
\newcommand{\Zgroup}        {\mm{\sf Z}}

\newcommand{\PH}	     {\mm{\mathsf{PH}}}

\renewcommand{\hom}[2]{\Hgroup_{#2}^{#1}}
\newcommand{\betti}[2]{\beta_{#2}^{#1}}
\renewcommand{\H}{\mathcal{H}}
\newcommand{\bary}{\mathcal{B}}
\newcommand{\missing}{\mathcal{M}}
\newcommand{\po}{\mathcal{P}}
\newcommand{\ECP}{\mathcal{O}}
\newcommand{\pol}{\Gamma}
\newcommand{\Ord}{Ord}

\DeclareMathOperator{\im}{im}

\newcommand{\myker}{\mm{\mathrm{ker\,}}}
\newcommand{\myim}{\mm{\mathrm{im\,}}}

\newcommand{\bdr}{\mm{\mathrm{\partial}}}

\newcommand{\Nrv}{\mm{\mathrm{Nrv}}}

\newcommand {\mm}[1]        {\ifmmode{#1}\else{\mbox{\(#1\)}}\fi}

\newcommand{\Kspace}        {\mm{{\mathbb K}}}

\newcommand{\Acal}        {\mm{\mathcal A}}
\newcommand{\Rcal}        {\mm{\mathcal R}}
\newcommand{\Ical}        {\mm{\mathcal I}}

\newcommand{\ie}            {{i.e.}}
\newcommand{\etal}            {{et al.}}

\newcommand{\infgroup}{\mm{\mathsf{inf}}}

\title[A Survey of Homology Theories for Hypergraphs]{A Survey of Simplicial, Relative, and Chain Complex Homology Theories for Hypergraphs}

\author[E. Gasparovic, et al.]{Ellen Gasparovic}
\address{Union College, 807 Union Street, Schenectady, NY 12309}
\email{gasparoe@union.edu}
\author[]{Emilie Purvine}
\address{Pacific Northwest National Laboratory, 1100 Dexter Ave N., Seattle, WA 98109}
\email{emilie.purvine@pnnl.gov}
\author[]{Radmila Sazdanovic}
\address{NC State University, PO Box 8205, Raleigh NC 27695}
\email{rsazdan@ncsu.edu}
\author[]{Bei Wang}
\address{University of Utah, 72 South Central Campus Drive,
Salt Lake City, UT 84112}
\email{beiwang@sci.utah.edu}
\author[]{Yusu Wang}
\address{Halicioglu Data Science Institute at University of California, HDSI 446, San Diego, CA  92093}
\email{yusuwang@ucsd.edu}
\author[]{Lori Ziegelmeier}
\address{Macalester College, 1600 Grand Avenue, Saint Paul, MN 55104}
\email{lziegel1@macalester.edu}

\begin{document}

\begin{abstract}
Hypergraphs have seen widespread application in network and data science communities in recent years. 
We present a survey of recent work to construct auxiliary structures from hypergraphs---specifically simplicial, relative, and chain complexes---that can be used to build homology theories for hypergraphs. We define and describe nine different constructions and their associated homology theories.
We discuss some interesting properties of each homology theory to
show how various hypergraph properties imply properties of the homology groups. We also include discussion of functoriality for several of the homology theories. Finally, we provide a series of illustrative examples by computing many of these homology theories for small hypergraphs to show the variability of the methods and build intuition. 
\end{abstract}

\maketitle

\section{Introduction}
\label{sec:introduction}

Homology---uncovering the ``shape" of an object as represented by its multidimensional holes, which are preserved under continuous deformations like stretching and twisting---has been studied by theoretical mathematicians since the 1800s with the introduction of the Euler characteristic. 
The study of homology and homological algebra has since grown to be a large area of research within algebraic topology \cite{cartan2016homological,Hatcher}.
Theoretical advances, including persistent \cite{EdelsbrunnerLetscherZomorodian2002,zomorodian2005computing} and zigzag \cite{carlsson2010zigzag} homology, began in the early 2000s and formed the field of topological data analysis (TDA), or more generally computational topology \cite{edelsbrunner2010computational,ghrist2014elementary}.
In recent years computational tools have emerged to allow the application of (persistent) homology to real data sets in which calculation by hand would be nearly impossible \cite{otter2017roadmap}. 
TDA has been applied with great success in a variety of application areas including computational biology \cite{bhaskar2023topological,zebrafish, TZH15, wu2017optimal}, neuroscience \cite{anderson2018topological, bendich2016persistent, chung2021reviews, giusti2016two}, geospatial data \cite{feng2021persistent}, computer graphics \cite{pointcloud-topo, singh2007topological}, machining \cite{KHASAWNEH2018195}, and robotics \cite{pathplanning}. These and numerous other applications, many of which can be found in the DONUT database \cite{DONUT}, show that the shape of data is indeed meaningful.

In order to apply homology (and then persistent homology) to a data set, one must derive a structure from the data from which it is possible to compute homology, e.g., a simplicial complex, chain complex, or topological space.
For simplicity we will refer to these structures as ``topological objects'' in this survey. We recognize that chain complexes are not themselves topological objects, but we use the term to mean something that we can compute homology of.
In many cases, there is a straightforward \emph{canonical} way to perform such a transformation.
From a point cloud or metric space, we derive a Vietoris-Rips (VR) or \v{C}ech complex; from a function, we construct a sub- or super-level set.
Data that can be represented as a graph is natively a 1-dimensional simplicial complex, and you can take its clique complex to form a higher dimensional simplicial complex.
One could also form an infinite metric space from a graph called a metric graph using the shortest path metric, whose one-dimensional persistent homology (using the VR or \v{C}ech complex) was characterized in \cite{gasparovic2018complete}.

But some data are too complex to be unambiguously represented by a point cloud, function, or graph.
Take for example academic collaborations.
While it is true that collaboration graphs have provided tremendous value to understand the way people and research topics interact \cite{batagelj2000some,newman2001structure,newman2004best}, these graphs model multi-way collaborations as groups of pairwise collaborations, i.e., graph edges.
But, given a collaboration graph with edges, e.g., $(a,b),$ $(b,c),$ and $(a,c)$, it is not possible to identify whether these edges are modeling three 2-author papers or one 3-author paper without additional information.
Another example comes from biology where proteins can interact in complex ways requiring sometimes many proteins or other enzymes to be present in order for a reaction to occur.
Network biology studies these systems using protein-protein interaction graphs \cite{stelzl2005human}, again modeling what are truly multi-way relationships with groups of pairwise interactions.

On the surface, it may seem that systems with multi-way interactions are closer to a topological space than a point cloud is, since they are collections of subsets of an overall set (e.g., researchers or proteins), and a topological space is a collection of subsets, albeit with some extra properties.
But these collections of sets of researchers or proteins are more general than a topological space and imposing such extra structure requires the addition of multi-way interactions that don't appear in the data, or removal of any that violate the topological space properties.
%may change the underlying ``shape'' of the data.
Instead, it is more accurate to represent these systems as hypergraphs, a higher dimensional analog of a graph.
Given the success TDA has already found in the data science community, and the promise of using topology to make sense of complex data represented by point clouds, functions, or graphs, it seems natural to extend the theory of homology to complex hypergraph-structured data.
However, as opposed to the case of point clouds, functions, or graphs, there is not one canonical way to derive an appropriate topological object if we wish to apply homology to hypergraphs.

The network and data science communities have been moving in the direction of using hypergraphs as data models in recent years \cite{aksoy_hypernetwork_2020,juul2024hypergraph,sharma2020thesis,traversa2023robustness}.
Similarly, many in the TDA community have recognized that hypergraphs can be studied from a topological perspective and have defined several homology theories for hypergraphs (which we describe in detail and cite in Section \ref{sec:homology}). 
However, it is apparent that no canonical solution exists.
Or rather, the straightforward approach of building a simplicial complex by adding all subsets of every hyperedge captures only one of the many notions of ``shape'' or structure found in a hypergraph.

In this paper, we present a survey of recent work to define topological objects from hypergraphs---specifically simplicial, relative, and chain complexes---that can be used to build homology theories for hypergraphs.
We provide a running example showing how to construct each topological object and the resulting homology.
We also discuss some interesting properties of each theory to show the types of hypergraph properties that are captured (or not).
Finally, we compute these homology theories for several small examples to show the variability of the theories and build intuition.
Six of the homology theories we survey have appeared in previous publications and are well-studied; three of them (Sections \ref{sec:ResBS}, \ref{sec:RelBS}, and \ref{sec:weighted}) are new and introduced here.

This paper is organized as follows: In Section~\ref{sec:preliminaries}, we provide background and definitions for simplicial complexes and homology as well as hypergraphs. We define nine different homology theories for hypergraphs in Section~\ref{sec:homology} by describing the transformation of a hypergraph into a topological object of a simplicial, relative, or chain complex on which to compute homology. We also discuss a variety of properties, where known, for these homology theories including functoriality, connections to duality, and recoverability of the hypergraph from the topological structure, highlighting related work and open problems along the way. In Section~\ref{sec:intuition}, we consider several examples to build intuition for each homology method, discussing what happens to the homology when a sub- or super-hyperedge is added to a hypergraph. We conclude with a discussion of interesting research directions in Section~\ref{sec:discussion}.

\paragraph{A note on directed hypergraphs} Just as there is a rich theory around directed graphs and homology defined for directed graphs (see, e.g., \cite{chowdhury_persistent_2018, DOCHTERMANN2023103704, GrigoryanLinMuranov2020}), there is also an active research area for directed hypergraphs including homological notions (e.g., \cite{bubenik2024homotopy,diestel2020homological}).
However, our survey will focus only on undirected hypergraphs as there is much to cover in the undirected case alone.
\section{Preliminaries}
\label{sec:preliminaries}

In this section, we begin by briefly reviewing the essentials needed to study the homology of simplicial complexes and chain complexes. We then shift our focus to several fundamental concepts related to hypergraphs, including a discussion of category-theoretic formulations.

\subsection{Topology and homology}
Topology is the study of invariants of spaces under continuous deformation.
Algebraic topology uses an algebraic language to define the invariants.
Here, we describe homology, a notion from algebraic topology that is concerned with finding ``holes'' in spaces.
We review simplicial homology first, followed by relative homology and chain complex homology.
For more details see \cite{Hatcher}.

\subsubsection{Simplicial complexes}
\label{sec:simplicial}
Simplicial homology captures the homology groups of a simplicial complex.
In this survey, we work with abstract simplicial complexes (though we will continue to simply use the term ``simplicial complex'').
Given a base set $X$, an \emph{(abstract) simplicial complex}, $K = \{\sigma\}$, on $X$ is a collection of nonempty subsets of $X$ with the property that if $\sigma \in K$ and $\tau \subset \sigma$, then $\tau \in K$.
In other words, $K$ is closed under the subset relation.
Each subset of $X$ in $K$ is a \emph{simplex}.
The \emph{dimension} of a simplex is its size minus $1$ and we will say a simplex of dimension $p$ is a $p$-simplex.
Any maximal simplex, not contained in any other simplices, is called a \emph{facet}.
The \emph{dimension} of a simplicial complex is the dimension of its maximum dimensional facet.
Any abstract simplicial complex can be associated with a geometric realization, whose visualization may help the reader understand conceptually what the homology is capturing:
$0$-simplices are vertices, $1$-simplices are edges, $2$-simplices are (filled) triangles, $3$-simplices are (solid) tetrahedra, and so on.

Simplicial complexes form a category $\Simp$ where the objects are simplicial complexes and the morphisms are simplicial maps. Given two simplicial complexes $K_1$ and $K_2$ on base sets $X_1$ and $X_2$ respectively, a map $f: X_1 \rightarrow X_2$ is a \emph{simplicial map}, if for every $\sigma \in K_1$, the set $\{f(x) : x \in \sigma\}$ is a simplex in $K_2$.
As we define various transformations from hypergraphs to simplicial complexes later, we will show that many of them are functors from a category of hypergraphs $\Hyp$ or $\MHyp$ (defined in Section~\ref{sec:categories}) to $\Simp$.

A \emph{weighted simplicial complex} $(K,w)$ is a simplicial complex $K$ together with a weight function $w: K \rightarrow \R$.
Two weighted simplicial complexes, $(K_1, w_1)$ on $X_1$ and $(K_2, w_2)$ on $X_2$, are isomorphic if their underlying structures are isomorphic via a simplicial map $\phi : X_1 \rightarrow X_2$ such that for every $\sigma \in K_1$, $w_2(\{\phi(v) : v \in \sigma\}) = w_1(\sigma)$. In other words, there is an isomorphism of the simplicial complex structures such that the weights on corresponding simplices are equal.

\subsubsection{Simplicial homology}\label{sec:simp_homology}
For the purposes of homology computation, an \emph{orientation} for each simplex needs to be chosen.
The orientation of a $p$-simplex is given by an ordering of its vertices $[x_{0}, \ldots, x_{p}]$.
Transposing two elements in the orientation causes a sign flip (e.g., $[x_1, x_0] = -[x_0, x_1]$).
This choice of orientation can be arbitrary and the final homology calculation is invariant to the orientation (up to isomorphism).

Given a simplicial complex, $K$, a \emph{$p$-chain}, $c$, is a formal sum of oriented $p$-simplices, $\sigma_i$, in $K$ with coefficients, $a_i$, in some group or field $\K$, that is, $c = \sum a_i \sigma_i$ with $a_i \in \K$.
Unless otherwise stated in the remainder of this survey we use $\K=\Z/2\Z$ (called \emph{modulo 2 coefficients}).
The \emph{$p$-chain group}, $\Cgroup_p$, is the group of all $p$-chains where addition is defined component-wise, i.e.,
for $c = \sum a_i \sigma_i$ and $c' = \sum a'_i \sigma_i$, then $c+c'=\sum(a_i+a'_i)\sigma_i$.
When $\K$ is a field $\Cgroup_p$ has the additional structure of a vector space, but it is still commonly referred to as a chain group.
The \emph{boundary} of an oriented $p$-simplex, $\sigma = [x_{0}, \ldots, x_{p}]$, is the alternating\footnote{When $\K=\Z/2Z$ the $(-1)^j$ can be removed since $1=-1$, but we keep it here for the sake of generality of the definition.} sum of its $(p-1)$-dimensional faces.
That is,
\[\bdr_p \sigma = \sum_{j=0}^p (-1)^j[x_{0}, \ldots, \widehat{x}_{j}, \ldots, x_{p}],\]
where $\widehat{x}_{j}$ indicates that the vertex $x_{j}$ is removed, yielding a $(p-1)$-simplex.
The boundary of a $p$-chain can be computed by extending this linearly and thus is a $(p-1)$-chain.
It is straightforward to show that the composition $\bdr_{p-1}\circ \bdr_p = 0$, allowing us to create a sequence of spaces and linear maps,
\[ \cdots \Cgroup_p \stackrel{\bdr_p}{\longrightarrow} \Cgroup_{p-1} \cdots \stackrel{\bdr_2}{\longrightarrow} \Cgroup_1 \stackrel{\bdr_1}{\longrightarrow} \Cgroup_0 \stackrel{\bdr_0}{\longrightarrow} 0, \]
such that $\ker\bdr_{p-1} \subseteq \im\bdr_{p}$.
A sequence with this property is called a \emph{chain complex}.

The kernel of $\bdr_{p}$ contains all $p$-chains whose boundary is zero.
To gain intuition, consider the simplicial complex $K=\{a,b,c,ab,ac,bc\}$ (using shorthand where $ab$ means the 1-simplex $[a,b]$).
The boundary of the 1-chain $[a,b]+[b,c]+[c,a]$ is
\[ (b-a) + (c-b) + (a-c) = 0.\]
This 1-chain represents the boundary of a triangle which has no 0-dimensional ``endpoints,'' aligning with the fact that the boundary is 0.
We refer to $\Zgroup_{p} := \ker\bdr_{p}$ as the \emph{group of $p$-cycles}.
Then, the image of $\bdr_{p+1}$, by definition, are all of the boundaries of $(p+1)$-chains.
For example, the boundary of the 2-chain $[a, b, c]$ is $[b,c] - [a,c] + [a,b]$.
We refer to $\Bgroup_p := \im\bdr_{p+1}$ as the \emph{group of $p$-boundaries}.
The \emph{$p$-th simplicial homology group} is the quotient $\Hgroup_p = \Zgroup_{p}/\Bgroup_p$.
Intuitively, these are the cycles in dimension $p$ that are not the boundary of a collection of $(p+1)$-dimensional simplices in $K$.
The dimension\footnote{``Dimension'' refers to the rank or dimension of $\Hgroup_p$, not the dimension $p$ of the simplices in $\Cgroup_p$.} of the $p$-th homology group is called the \emph{$p$-th Betti number}, denoted $\betti{}{p}$.
To denote homology and Betti number of an arbitrary dimension, we use $\hom{}{\bullet}$ and $\betti{}{\bullet}$, respectively.
Two $p$-chains, $\sigma, \kappa \in \Cgroup_p$, are said to be \emph{homologous} if their difference is in $\Bgroup_p$, i.e., $\sigma - \kappa$ is the boundary of a $(p+1)$-chain.

\subsubsection{Relative simplicial homology}\label{sec:rel_hom}
Simplicial homology, sometimes referred to as \emph{absolute} simplicial homology, can be extended analogously to relative (simplicial) homology, which computes the homology of a simplicial complex $K$ relative to a subcomplex $K_0 \subseteq K$. Intuitively, relative homology computes the reduced homology in the complement $K \setminus K_0$ when the subcomplex is identified to a point.
The \emph{relative $p$-chain group} $\Cgroup_p(K,K_0):=\Cgroup_p(K)/\Cgroup_p(K_0)$ is a quotient of the chain groups.
The boundary map $\bdr_p$ induces a quotient boundary map $\bdr_p:\Cgroup_p(K,K_0)\to \Cgroup_{p-1}(K,K_0)$ since $\bdr_p$ takes the $p$-chains of the subcomplex $K_0$ to $(p-1)$-chains of the subcomplex.
We can thus form \emph{relative $p$-cycle groups} $\Zgroup_p(K,K_0):=\ker \bdr_p$, \emph{relative $p$-boundary groups} $\Bgroup_p(K,K_0):=\im \bdr_{p+1}$, and a \emph{relative chain complex} with $\bdr_{p-1}\circ\bdr_p = 0$, analogously.
The \emph{relative homology group} is defined to be  $\Hgroup_p(K,K_0):= \Zgroup_p(K,K_0)/\Bgroup_p(K,K_0)$.

As shown in~\cite[pp. 124-125]{Hatcher}, relative homology can be expressed as reduced absolute homology by considering the space $K \cup CK_0$, where $CK_0$ is the cone $(K_0 \times I)/(K_0 \times \{0\})$ whose base $K_0 \times \{1\}$ we identify with $K_0 \subseteq K$.
That is, $\Hgroup_p(K, K_0)$ is isomorphic to $\tilde{\Hgroup}_p(K \cup CK_0)$.%; that is, $\Hgroup_p(K, K_0) \cong \tilde{\Hgroup}_p(K \cup CK_0) \cong \tilde{\Hgroup}_p(K/K_0)$.

\subsubsection{Homology given a chain complex}
\label{sec:chain-homology}
The notion of a chain complex defined above for simplicial homology can be made much more general.
Any sequence of vector spaces, $\{\Cgroup_p\}$, and linear maps, $\bdr_p : \Cgroup_p \rightarrow \Cgroup_{p-1}$, with the property that $\bdr_{p-1}\circ\bdr_{p} = 0$ for all $p$ is a chain complex.
The subsequent definitions of $\Zgroup_p$, $\Bgroup_p$, and $\Hgroup_p = \Zgroup_{p}/\Bgroup_p$ all follow identically.
Because there may not be a simplicial complex underlying the chain complex,  we may lose intuition of homology as cycles that are not boundaries, but the computation is valid. We call $\Hgroup_p$ the $p$-th homology of the chain complex, with $\Hgroup_{\bullet}$ denoting the homology across all dimensions.

\subsection{Hypergraphs}
In this section, we define the concept of a hypergraph as well as properties and related structures that are relevant to our topological exploration of hypergraphs. Many of these definitions are standard as found in references such as \cite{berge1984hypergraphs,bretto2013hypergraph,aksoy_hypernetwork_2020}.

\subsubsection{Hypergraph basics}
In Section \ref{sec:categories} we will give a category theoretical definition of hypergraphs and hypergraph morphisms. Here we give a standard definition of hypergraphs that we will use throughout the survey unless we are making a category theory argument. A \emph{hypergraph}, $\H=(V, E, \epsilon)$, is a finite set of \emph{vertices}, $V$, together with a finite set of \emph{hyperedges}, $E=\{e_1, \ldots, e_m\}$, and a function $\epsilon : E\rightarrow 2^V$ that identifies the vertices in hyperedge $e_i$ as $\epsilon(e_i)$.
We think of $E$ as the names of the hyperedges and $\epsilon$ as identifying which vertices are in each hyperedge.
%A \emph{hypergraph}, $\H=(V, E)$, is a finite set of \emph{vertices}, $V$, together with a finite indexed family of \emph{hyperedges}, $E$.
%To be precise, this indexed family of hyperedges, $E = \{e_1, e_2, \ldots, e_m\}$, has a function $\epsilon : E \rightarrow 2^V$ that identifies the vertices in hyperedge $e_i$ as $\epsilon(e_i)$.
A hypergraph in which all hyperedges have size $k$ is called a \emph{$k$-uniform} hypergraph.
Hypergraphs generalize graphs. Indeed, every graph is a 2-uniform hypergraph.
When it is clear from context, we use \emph{edges} and \emph{hyperedges} interchangeably.

We note that this hypergraph definition with a mapping from names of hyperedges to sets of vertices does not forbid multi-edges, where $\epsilon(e) = \epsilon(e')$ for some $e \neq e'$, nor does it forbid the empty set as a hyperedge. Multi-edges are quite common in real hypergraph data, and empty edges are also possible. However, in some of the homology theories below (e.g., closure homology, embedded homology), multi-edges and empty edges are ignored. In those cases we will assume that multi-edges have been ``collapsed'' into single edges and empty edges have been removed.
We will point out when this is required and when multi- and empty edges are not ignored or problematic.

For ease of exposition, in the remainder of the paper we will often use simplified notation $\H=(V, E)$ and write that for each $e \in E$ we have $e \subset V$. We may also write $v \in e$ to mean that $v \in \epsilon(e)$.

\subsubsection{Hyperblock}
A simplicial complex is a special type of hypergraph.
Given an arbitrary hypergraph, $\H$, we can form its associated simplicial complex or its \emph{upper closure}, by first collapsing multi-edges, removing empty edges, and then adding all nonempty subsets of each remaining hyperedge, thus forming a simplicial complex. We denote this by $\Delta(\H)$.
Similarly, we can create a hypergraph with no containment among hyperedges by removing any hyperedge that is contained in a larger hyperedge.
In general, a hypergraph with no edge containment relations is called a \emph{simple} hypergraph\footnote{In graphs the term ``simple'' is used to mean no loops and no multi-edges. However, in the hypergraph literature the term is used for the stronger property of no edge containment relations \cite{berge1984hypergraphs,joslyn2020hypernetwork}. This implies that there are no multi-edges and that the only ``loops'' (singleton edges) must be isolated, i.e., not contained within any larger edges.}.
Any hyperedge that is not contained in any other hyperedge is a ``top level'' edge, or a \emph{toplex}.
In a simple hypergraph, all edges are toplexes.
Finally, there is a collection of hypergraphs that all have the same upper closure and simple hypergraph, with varying levels of inclusions among hyperedges.
Such a collection of hypergraphs is called a \emph{hyperblock} \cite[page 5]{joslyn2020hypernetwork}, which will come up in our survey of homology methods for hypergraphs (Section~\ref{sec:homology}).
We will observe that, for some homology theories, the homology may be the same across all hypergraphs in a hyperblock, and in other cases, it may vary across a hyperblock.
Structurally, every hypergraph can be considered as being nested between two hypergraphs in a hyperblock, its upper closure and its simple hypergraph.

Another way of describing $\Delta(\H)$ is that it is the smallest simplicial complex that contains $\H$ as a subset (after removing multi-edges and empty edges from $\H$). We can flip this around and introduce $\delta(\H)$ as the largest simplicial complex contained within $\H$. Unless $\H = \Delta(\H)$, this \emph{lower closure} is not in the same hyperblock as $\H$. In practice in hypergraphs observed from real-world datasets, $\delta(\H)$ tends to be much smaller than $\H$ (possibly empty) and may not be informative of the hypergraph structure. However, it is a valid simplicial complex and provides insight into what portion of the hypergraph is a simplicial complex. Intuitively, $\Delta(\H)$ is the simplicial complex obtained from $\H$ via ``additions," whereas $\delta(\H)$ is obtained from $\H$ via ``peeling."

\subsubsection{Incidence matrix and duality}
A hypergraph can be represented by its \emph{incidence matrix}, $S$, a binary matrix with $|V|$ rows and $|E|$ columns in which there is a $1$ in row $i$, column $j$ if and only if vertex $v_i$ is in edge $e_j$ and all other entries are $0$.
If data is provided in the form of a $0$-$1$ matrix, a hypergraph can be unambiguously constructed by considering the matrix as its incidence matrix.
The transpose, $S^T$, of a given incidence matrix, $S$, is also a 0-1 matrix that can be represented by a hypergraph.
The two hypergraphs, created from $S$ and $S^T$, are \emph{dual} hypergraphs formed from the same incidence relation.
The dual of a hypergraph can be formed without passing through the incidence matrix by swapping the roles of vertices and hyperedges.
Formally, the dual $\H^*=(E^*, V^*)$ of hypergraph $\H=(V, E)$ has vertices $E^* = \{e_1^*, \ldots, e_m^*\}$ and edges $V^* = \{v_1^*, \ldots, v_n^*\}$ such that $v_i^* = \{e_j^* : v_i \in e_j \text{ in }\H\}$.
If $\H$ has multi-edges, all copies of the edges are in $E^*$. Any empty edges in $E$ will be isolated vertices in $\H^*$.

\subsubsection{Line graphs and nerves}
A line graph is a graph associated to a hypergraph that captures the pairwise intersection relations among the hyperedges. Hyperedges of $\H$ correspond to vertices of $L(\H)$, and edges in $L(\H)$ reflect nonempty intersections  among pairs of hyperedges in $\H$.

\begin{definition}
The \emph{line graph} $L(\H)$ of a hypergraph $\H$  consists of a vertex set $\{e^*_1, \cdots, e^*_m\}$, and an edge set $\{(e^*_i, e^*_j) \mid e_i \cap e_j \neq \emptyset, i \neq j\}$.
\end{definition}

\begin{definition}\label{def:nerve}
Let $\H=(V, E)$ be a hypergraph. The \emph{nerve} of $\H$, denoted $\Nrv(\H)$, is a simplicial complex on the base set $E$ such that $\sigma \in \Nrv(\H)$ whenever $\cap_{e \in \sigma} e \neq \emptyset$. In other words, there is a simplex in $\Nrv(\H)$ for every set of hyperedges with nonempty intersection.
\end{definition}
%The line graph is the 1-dimensional skeleton of the nerve.

Intuitively, hyperedges form a cover of the vertices, and a line graph can be considered as the $1$-dimensional nerve of the cover.
Just as in the dual, all multi and empty edges in $\H$ are vertices in $L(\H)$ and $\Nrv(\H)$.

\subsubsection{Walks, cycles, and components}
We now describe the concepts of hypergraph walks and components, as introduced in \cite[Def. 5]{aksoy_hypernetwork_2020}.
Two hyperedges $e, f \in E$ are \emph{$s$-adjacent} if $|e \cap f| \geq s$, i.e., they intersect in at least $s$ vertices.
Then, we say that $e=e_{0}, e_{1}, \ldots, e_{k}=f$ is an \emph{$s$-walk} of length $k$ from $e$ to $f$, if $e_{j}$ and $e_{j+1}$ are $s$-adjacent for $0 \leq j\leq k-1$.
If $e=f$ then the $s$-walk is \emph{closed}.
Related notions of $s$-trace, $s$-meander, $s$-path, and $s$-cycle are defined in \cite{aksoy_hypernetwork_2020} to generalize paths and cycles in graphs.
Other notions of acyclicity appear in \cite{berge1984hypergraphs} and \cite{fagin1983degrees}.
A set of hyperedges $F \subseteq E$ is $s$-connected if there is an $s$-path between any pair $f_i, f_j \in F$.
$F$ is an \emph{$s$-component} (or \emph{$s$-connected component}) if it is $s$-connected and there is no $s$-connected $F' \subset E$ such that $F \subsetneq F'$.

\subsubsection{Categories of hypergraphs}
\label{sec:categories}

The main focus of this paper is to survey homology theories for hypergraphs, not to provide a deep category theoretical treatment of hypergraphs and homology.
However, homology (particularly homological algebra) has been closely related to category theory since the inception of category theory. For example, it is well-known that simplicial homology is a functor from the category $\Simp$ to the category of abelian groups (or vector spaces if using field coefficients), denoted as $\Ab$. The functoriality of simplicial homology enables the theory of persistent homology \cite{EdelsbrunnerLetscherZomorodian2002}, as simplicial maps induce homomorphisms on the homology groups through the homology functor.
Persistent (simplicial) homology applied to point clouds and functions has shown incredible value to data science (e.g.,~\cite{CarlssonVejdemo-Johansson2021,YanMasoodSridharamurthy2021}) and we expect that the notions of homology and persistent homology for hypergraphs would have similar value.
Indeed, we have already seen the practical application of the barycentric homologies (defined in Sections~\ref{sec:ResBS} and \ref{sec:RelBS}) in classification~\cite{aktas2023hypergraph}.
Therefore, to enable proofs and discussions of functoriality throughout Section \ref{sec:homology} we will introduce and discuss some hypergraph categories here.

%As we introduce various homology theories for hypergraphs below, we will discuss which ones are proven to be functors from a category of hypergraphs to $\Ab$, similarly enabling persistent homology for hypergraphs.
We define two categories of hypergraphs following
% dorfler for origination of hypergraph category
% incidence for origination of \mathfrak{H} as comma category
% 2020simplification for origination of SSys
% green for better notation
\cite{dorfler1980category,grilliette2023incidence,grilliette2020simplification,green2025topological}: $\MHyp$ is the category of multi-hypergraphs and $\Hyp$ is the category of hypergraphs without multi-edges.
To best mirror the definition of a hypergraph above we will use slightly different notation than those papers. We will use the category names from \cite{green2025topological}.
We assume the reader is familiar with the category of sets and set maps, denoted as $\Set$, and the full subcategory of finite sets, denoted as $\FinSet$.
Although it is possible that many of the results in this paper would apply if hypergraphs have infinite vertex sets, infinite hyperedges, or infinitely many hyperedges, in this paper we will restrict our hypergraph categories to finite hypergraphs for simplicity and to align with real world hypergraphs.

We first define $\MHyp$, a category that dates back to \cite{dorfler1980category}. In \cite{grilliette2023incidence}, the authors remove D\"{o}rfler and Waller's requirement that hyperedges be nonempty and then formulate it as a comma category, denoting it $\mathfrak{H}=(id_\Set \downarrow \po)$, relying on the covariant powerset functor.

\begin{definition}[{\cite[page 13]{mac2013categories}}]
    The \emph{covariant powerset functor}, $\po: \Set \rightarrow \Set$ is defined as follows:
    \begin{itemize}
        \item For an object $X \in \Set$, $\po(X)$ is the powerset of $X$.
        \item For a morphism $\phi : X \rightarrow Y$, $\po(\phi)(A) := \{\phi(x) : x \in A\}$, the image of $A$ under $\phi$, where $A \subseteq X$.
    \end{itemize}
\end{definition}

For readers not familiar with the notion of a comma category, the objects and morphisms are as follows:
\begin{definition}[{\cite[page 6]{grilliette2023incidence}}]
    The category of (finite) hypergraphs, $\MHyp$, is given by the following objects and morphisms:
    \begin{itemize}
        \item $Ob(\MHyp) = \{ \H = (V, E, \epsilon) : V, E \in \FinSet, \epsilon : E \rightarrow \po(V)\}$, i.e., an object $\H$ of $\MHyp$ consists of a finite set of vertices, $V$, a finite set of hyperedges, $E$, and a function, $\epsilon$, that maps each hyperedge to its corresponding set of vertices.
        \item A morphism $(V, E, \epsilon) \rightarrow (V', E', \epsilon')$ is a pair $(f_V, f_E)$ where $f_V : V\rightarrow V'$, $f_E : E\rightarrow E'$ are morphisms in $\FinSet$ such that the following diagram commutes:
            \[ \begin{tikzcd}
            E \arrow{r}{f_E} \arrow[swap]{d}{\epsilon} & E' \arrow{d}{\epsilon'} \\%
            \po(V) \arrow{r}{\po(f_V)}& \po(V')
            \end{tikzcd}
            \]
        \item Composition of morphisms is component-wise: \[(f_V, f_E) \circ (f_V', f_E') = (f_V\circ f_V', f_E\circ f_E')\]
    \end{itemize}
\end{definition}
In words, a morphism from hypergraph $\H$ to hypergraph $\H'$ in $\MHyp$ requires a map between vertices ($f_V$) and a map between hyperedges ($f_E$) such that the set of vertices of the hyperedge that $e\in E$ maps to ($\epsilon'(f_E(e))$) is equal to the set of vertices that are mapped to by the vertices in $e$ ($\po(f_V)(\epsilon(e))$).
We reiterate that unless we are making a category theoretical argument, we will refer to $\epsilon(e)$ as simply $e$. For instance, instead of saying $v \in \epsilon(e)$, we will simply say $v \in e$.

While $\MHyp$ aligns best with real hypergraph data that frequently includes multi-edges, we remark that many of the homology theories we will survey ignore multi-edges.
Therefore, we also introduce the category $\Hyp$ of hypergraphs without multi-edges and will restrict many of the category theoretical proofs in Section \ref{sec:homology} to $\Hyp$.

\begin{definition}[\cite{green2025topological}, page 7 with Def. 14; \cite{grilliette2020simplification}, page 9 as $\SSys$]
The category of (finite) hypergraphs without multi-edges, $\Hyp$, is given by the following:
\begin{itemize}
    \item $Ob(\Hyp) = \{\H = (V, E) : V \in \FinSet, E \subseteq \po(V)\}$
    \item A morphism $f$ from $\H=(V, E)$ to $\H'=(V', E')$ is a function $f : V\rightarrow V'$ such that for all $e \in E$ the set $\{f(v)\}_{v \in e}$ is an edge in $E'$.
\end{itemize}
The fact that composition of morphisms is a morphism is proven in \cite[Lemma 1]{green2025topological}. Associativity and the identity map are inherited from $\FinSet$.
\end{definition}

In \cite[Sec. 2.1.2]{grilliette2024simplification} and restated in \cite[Lemma 3]{green2025topological}, the authors remark that there is an embedding functor from $\Hyp$ into $\MHyp$ whose left adjoint is the collapse functor from $\MHyp$ to $\Hyp$ \cite[page 9]{grilliette2020simplification}.
On objects, the embedding functor ($Emb$) takes a hypergraph $(V, E)$ with $E \subseteq \po(V)$ to the same vertex and edge set, with incidence function $\epsilon$ as the identity since every $e \in E$ is already in $\po(V)$.
The collapse functor ($Col$) takes a multi-hypergraph $(V, E, \epsilon)$ to a hypergraph without multi-edges, $(V, E_c)$, where $E_c \subseteq \po(V)$ and $\hat{e}$ is in $E_c$ if there is some $e \in E$ with $\epsilon(e) = \hat{e}$.
For sake of completeness we formally restate the functors, including the morphism maps, here in our notation.
\begin{align*}
    Emb : \Hyp& \rightarrow \MHyp\\
        (V, E)& \mapsto (V, E, \epsilon(e) = e) \hspace{2em} \textrm{(object map)}\\
        f& \mapsto (f, \po(f)) \hspace{4em} \textrm{(morphism map)}\\
        \\
    Col : \MHyp& \rightarrow \Hyp\\
        (V, E, \epsilon)& \mapsto (V, E_c) \hspace{2em} \textrm{(object map)}\\
        (f_V, f_E)& \mapsto f_V \hspace{3em} \textrm{(morphism map)}
\end{align*}

In fact, the $Emb$ map makes $\Hyp$ a \emph{full} subcategory of $\MHyp$. This is not proven in any of these references but the proof is straightforward. Given two hypergraphs in $\Hyp$ brought over to $\MHyp$, $\H = Emb(V, E)$ and $\H' = Emb(V',E')$ one can show that if $(f_V, f_E)$ and $(f_V, f_E')$ are two morphisms from $\H$ to $\H'$ with identical vertex maps the fact that there are no multi-edges in $\H$ or $\H'$ implies that $f_E = f_E'$. Therefore, the only morphism between them with $f_V$ as the vertex map must be $(f_V, \po(f_V))$.

We also remark that $\Simp$ is a full subcategory of $\Hyp$ by definition. Objects in $\Simp$ are simplicial complexes $K$ on base sets $X$ (analogous to $E$ and $V$ respectively).
Objects in $\Simp$ are more restrictive since the simplices (edges) have the downward closure requirement.
A simplicial map in $\Simp$ is a map on vertices which takes simplices to simplices, just like morphisms in $\Hyp$ are vertex maps that must take hyperedges to hyperedges. In Figure \ref{fig:hyp_cat_diagram}, we summarize how these three categories, $\Simp$, $\Hyp$, and $\MHyp$ are all related.
\begin{figure}[h]
    \centering
    \includegraphics[width=0.5\linewidth]{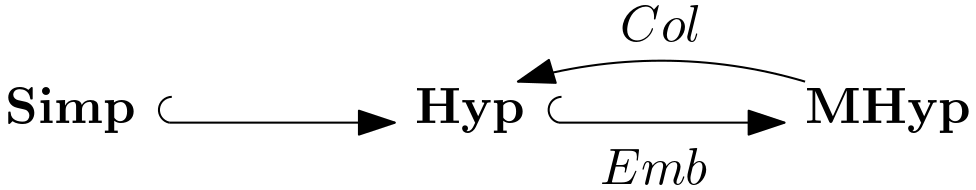}
    \caption{A diagram showing relationship between categories $\Simp$, $\Hyp$, and $\MHyp$ with $\hookrightarrow$ indicating full subcategory.}
    \label{fig:hyp_cat_diagram}
\end{figure}

Although the duality operation takes an object from $\MHyp$ to another object in the same category the reader may notice that duality does not appear in Figure~\ref{fig:hyp_cat_diagram}. This is because it was shown in \cite[Prop. 2.15]{grilliette2024simplification} that duality is not a functor on $\Hyp$ or $\MHyp$.
Indeed, one can create a counterexample of a pair of hypergraphs with a morphism between them in $\MHyp$ for which their duals have no morphims between them.

While both $\MHyp$ and $\Hyp$ allow the empty set to be a hyperedge, objects in $\Simp$ are not allowed to contain the empty set as a simplex.
Because the empty edge is a special case we say a little more here about its role in $\Hyp$ and $\MHyp$.
Let $f$ be a morphism in $\Hyp$ from $\H_1$ to $\H_2$.
Since $f$ is a morphism in $\Hyp$ it must take the vertices of hyperedges in $\H_1$ to the vertices of hyperedges in $\H_2$. Therefore, if $\emptyset$ is an edge in $\H_1$ then $\emptyset$ must be a hyperedge in $\H_2$.
The same is true for $\MHyp$, though you must go through the commutative diagram to show it.
In both $\Hyp$ and $\MHyp$, hypergraphs without empty edges can map to hypergraphs with empty edges since not every edge has to be mapped to.
Some of the homology theories in Section \ref{sec:homology} remove empty edges in the process and we will point out where this becomes an issue with functoriality.

Finally, we note that there are other categories of hypergraphs, for instance in the setting of metric measure spaces~\cite{ChowdhuryNeedhamSemrad2023} and incidence hypergraphs \cite[Definition 3.11]{grilliette2023incidence}. However, since our interest in including a category theoretical definition of hypergraphs is to prove that various notions of homology for hypergraphs are functors to $\Ab$, the fact that $\Simp$ is a full subcategory of $\Hyp$ draws clear parallels to the case of simplicial homology and motivates our use of $\Hyp$ as the main category of hypergraphs in this paper. In some cases, we may remark about how a homology theory applies to $\MHyp$ but we will mostly restrict our proofs to the simpler case of $\Hyp$.

%\footnote{Clearly $Ob(\Simp) \subset Ob(\Hyp)$ as every simplicial complex is a hypergraph where the simplices are the hyperedges. Every simplicial map in $\Simp$ implies a unique hypergraph morphism in $\Hyp$ by defining the map on simplices based on where its vertices map. The fact that simplicial complexes don't have multi-edges implies that there are no pairs of hypergraph morphisms between simplicial complexes that have the same vertex map but different hyperedge maps.}

% The following paragraph was replaced with the above paragraph based on reviewer feedback. The duality talk was more confusing than necessary.
%Grilliette, in \cite[Proposition 2.15]{grilliette2024simplification}, observed that the operation of hypergraph duality is not a functor from $\Hyp$ to itself. His category of incidence hypergraphs \cite[Definition 3.1.1]{grilliette2023incidence} does have the property that duality is a functor. But, while interesting, this definition is more than we need for the current paper. Our interest in duality is not whether duality is a functor but rather what is the relationship between $H(\H)$ and $H(\H^*)$ for each homology notion $H$.
%While we are interested in the interplay between duality and homology it is not in a functorial way. Moreover, the notion of a hypergraph homomorphism in $\Hyp$ is a generalization of a simplicial map. Since our interest in functoriality is to show that homology is a functor from a hypergraph category to $\Ab$, using a category that generalizes $\Simp$ draws clear parallels to the fact that simplicial homology is a functor.

\section{Homology Theories for Hypergraphs}
\label{sec:homology}
As a combinatorial object, a hypergraph is not a topological space or a chain complex. It is also not necessarily a simplicial complex (except in special cases). However, we may transform a hypergraph into a topological space, a simplicial complex, or a chain complex in a variety of ways. Each of these transformations yields a homology theory for hypergraphs. In this survey, we focus on the simplicial and chain complex homology rather than the singular homology of a hypergraph, by transforming a hypergraph into a simplicial complex or a chain complex rather than an arbitrary topological space. 
To that end, we describe and discuss the following homology theories for hypergraphs in detail including results and examples:
\begin{itemize}
\item {Closure homology} (Section~\ref{sec:closure-homology});
\item {Restricted barycentric subdivision homology} (Section~\ref{sec:ResBS});
\item {Relative barycentric subdivision homology} (Section~\ref{sec:RelBS});
\item {Polar complex homology} (Section~\ref{sec:polar}); 
\item {Embedded homology} (Section~\ref{sec:embedded});
\item {Path homology} (Section~\ref{sec:path-homology});
%\item {Magnitude homology} (Section~\ref{sec:magnitude}); 
%\item {Chromatic homology} (Section~\ref{sec:chromatic});   
\item {Weighted nerve complex \emph{persistent} homology} (Section~\ref{sec:weighted}).
\end{itemize} 
We also include a short section on chromatic and magnitude homology (Section \ref{sec:mag_chrom}), two homology theories that are well-studied for graphs but more nascent for hypergraphs, and a short section with references to other topological treatments of hypergraphs that are not simplicial, relative, or chain complex based.
Our survey includes known (with citations) and new (without citations) results; theorems with citations are occasionally proved using our notations or different methods for completeness.  
%Among the above homology theories, persistent versions of the  embedded homology and path homology have been studied in the literature by constructing a filtration based on a function defined on the hypergraph, but we do not discuss these in detail. 
%We do discuss the persistent homology of a weighted nerve complex, whose weights are derived from the structure of a hypergraph.  

\subsection{Simplicial homology of hypergraph upper closure}
\label{sec:closure-homology}

We begin our survey with the homology of what we feel is the simplest transformation from a hypergraph to a simplicial complex, the upper closure.
%If one were to choose a canonical notion of homology for a hypergraph, it would probably be the homology of its upper closure.
Following the preliminaries in Section~\ref{sec:preliminaries}, we can give this definition immediately.
\begin{definition}
Let $\H=(V, E)$ be a hypergraph. 
The \emph{closure homology} of $\H$, denoted $\hom{\Delta}{\bullet}(\H)$, is the homology of its upper closure $\Delta(\H)$, i.e.,  $\hom{\Delta}{\bullet}(\H) := \hom{}{\bullet}(\Delta(\H))$.
\end{definition}
Parks and Lipscomb~\cite{upscomb1991homology} considered the closure homology of hypergraphs and showed its relation to different notions of hypergraph acyclicity that appear in \cite{fagin1983degrees}. 
There are a few properties of $\hom{\Delta}{\bullet}$ that we point out.

\begin{proposition}
If $\H$ and $\H'$ are in the same hyperblock, then their closure homologies are equal. 
\end{proposition}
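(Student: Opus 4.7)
The plan is to proceed directly from the definitions, since the statement is essentially a tautology once the terms are unpacked. A hyperblock was introduced in Section~\ref{sec:preliminaries} as a collection of hypergraphs that share the same upper closure (and, equivalently for our purposes, the same simple hypergraph). Thus, if $\H$ and $\H'$ lie in the same hyperblock, the defining property gives immediately that $\Delta(\H) = \Delta(\H')$ as simplicial complexes.

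The second and only other step is to invoke the definition of closure homology: $\hom{\Delta}{\bullet}(\H) := \hom{}{\bullet}(\Delta(\H))$. Since closure homology is computed as the simplicial homology of the upper closure, and the upper closures agree, we conclude $\hom{\Delta}{\bullet}(\H) = \hom{}{\bullet}(\Delta(\H)) = \hom{}{\bullet}(\Delta(\H')) = \hom{\Delta}{\bullet}(\H')$.

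There is no real obstacle here; the only subtlety worth flagging in the write-up is that the equality $\Delta(\H) = \Delta(\H')$ is an equality of simplicial complexes (not merely an isomorphism), so that the resulting chain complexes are literally identical and no naturality argument is needed to pass to homology. If one wishes to be slightly more leisurely, one can recall why membership in a common hyperblock forces equal upper closures: every hyperedge of $\H$ is a subset of some toplex, the toplexes are common to both hypergraphs (they coincide with the simple hypergraph of the hyperblock), and $\Delta$ is determined entirely by the toplexes via downward closure. This remark is optional, but it makes transparent why the hyperblock structure is the natural level at which closure homology is invariant.
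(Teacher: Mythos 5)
Your argument is correct and is essentially identical to the paper's own proof: both simply unpack the definition of a hyperblock to get $\Delta(\H) = \Delta(\H')$ and then apply the definition of closure homology. The optional remark about toplexes is fine but not needed.
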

\begin{proof}
This is immediate from the definitions of hyperblock and closure homology, since $\H$ and $\H'$ are in the same hyperblock if and only if their upper closures $\Delta(\H)$ and $\Delta(\H')$ are equal, which in turn implies $\hom{\Delta}{\bullet}(\H) = \hom{\Delta}{\bullet}(\H')$.
\end{proof}

\begin{proposition}
For $\H \in \MHyp$ the closure homology of $\H$ is isomorphic to the closure homology of its dual $\H^*$, i.e., $\hom{\Delta}{\bullet}(\H) \cong \hom{\Delta}{\bullet}(\H^*)$. 
\end{proposition}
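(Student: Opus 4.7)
The plan is to pass through the nerve complex $\Nrv(\H)$ of Definition~\ref{def:nerve}, using the key observation that the upper closure of the dual coincides with the nerve:
\[
\Delta(\H^*) = \Nrv(\H).
\]
Unwinding definitions, a simplex of $\Delta(\H^*)$ is a subset of some dual hyperedge $v_i^* = \{e_j^* : v_i \in e_j\}$. A subset $\{e_{j_1}^*, \ldots, e_{j_k}^*\}$ of $E^*$ lies in some $v_i^*$ precisely when $v_i \in e_{j_1} \cap \cdots \cap e_{j_k}$, so $\{e_{j_1}^*, \ldots, e_{j_k}^*\} \in \Delta(\H^*)$ iff $e_{j_1} \cap \cdots \cap e_{j_k} \neq \emptyset$, which is exactly the defining condition for a simplex of $\Nrv(\H)$.

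With that identification in hand, it remains to show $\hom{}{\bullet}(\Delta(\H)) \cong \hom{}{\bullet}(\Nrv(\H))$, which follows from the Nerve Theorem. For each $e \in E$, let $\sigma_e := 2^e$ denote the full simplex on the vertices of $e$; these subcomplexes cover $\Delta(\H)$ by the very definition of upper closure. Each $\sigma_e$ is contractible, and any finite intersection satisfies
\[
\sigma_{e_{i_1}} \cap \cdots \cap \sigma_{e_{i_k}} \;=\; 2^{e_{i_1} \cap \cdots \cap e_{i_k}},
\]
which is either empty (when the hyperedges share no common vertex) or itself a full simplex, hence contractible. The nerve of this cover therefore has a simplex on $\{e_{i_1}, \ldots, e_{i_k}\}$ exactly when the corresponding intersection is nonempty, so the nerve of the cover coincides with $\Nrv(\H)$, and the Nerve Theorem yields a homotopy equivalence $\Delta(\H) \simeq \Nrv(\H)$.

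Chaining the two steps gives
\[
\hom{\Delta}{\bullet}(\H) \;=\; \hom{}{\bullet}(\Delta(\H)) \;\cong\; \hom{}{\bullet}(\Nrv(\H)) \;=\; \hom{}{\bullet}(\Delta(\H^*)) \;=\; \hom{\Delta}{\bullet}(\H^*).
\]
The main subtlety is invoking the Nerve Theorem in the right form: for a cover of the CW complex $\Delta(\H)$ by subcomplexes $\{\sigma_e\}_{e\in E}$, the relevant statement (sometimes attributed to Björner) requires that every nonempty finite intersection be contractible, which is exactly what the displayed equation guarantees. A purely algebraic substitute, should one prefer to avoid topological machinery, is to run the Mayer--Vietoris spectral sequence for this cover: since each nonempty intersection is acyclic, the $E^2$ page collapses to the simplicial chain complex of $\Nrv(\H)$, yielding the same isomorphism of homology groups.
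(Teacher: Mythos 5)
Your proof is correct, and it takes a genuinely different route from the paper's. The paper proves this proposition by passing through the Dowker duality theorem: it reads the incidence matrix as a relation $S \subseteq V \times E$, identifies the two Dowker complexes as $V_S = \Delta(\H)$ and $E_S = \Delta(\H^*)$, and then cites Dowker's result that $\hom{}{\bullet}(V_S) \cong \hom{}{\bullet}(E_S)$. You instead establish $\Delta(\H^*) = \Nrv(\H)$ up front---an identification the paper only proves in the \emph{following} proposition, where it is combined with the present one to deduce $\hom{\Delta}{\bullet}(\H) \cong \hom{}{\bullet}(\Nrv(\H))$---and then obtain $\Delta(\H) \simeq \Nrv(\H)$ from Bj\"{o}rner's nerve lemma applied to the cover of $\Delta(\H)$ by the full simplices $2^e$; your check that every nonempty intersection $2^{e_{i_1} \cap \cdots \cap e_{i_k}}$ is again a full simplex, hence contractible, is exactly the required hypothesis. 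In effect you have given a self-contained proof of Dowker duality in this special case. What your route buys: a stronger conclusion (a homotopy equivalence rather than merely an isomorphism on homology), no black-box citation, and the nerve identification for free, so the paper's next proposition becomes immediate. What the paper's route buys: brevity, and a formulation in the language of relations that the authors reuse elsewhere. The one degenerate case worth a sentence in your version is a hyperedge with $\epsilon(e) = \emptyset$, which the category $\Hyp$ permits: such an $e$ contributes an empty cover element and is not a vertex of $\Nrv(\H)$, so it should be discarded before invoking the nerve lemma (the paper's proof makes the analogous implicit convention that such an $e$ is not a vertex of $E_S$).
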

\begin{proof}
To show this we turn to the Dowker duality theorem \cite[Theorem 1a]{dowker1952homology}, \cite[Corollary 3]{chowdhury2018functorial}.
This duality theorem is typically stated in terms of binary relations rather than hypergraphs, so we will provide the statement and its connection to hypergraphs.
Let $V$ and $E$ be two totally ordered sets and $S \subseteq V \times E$ be a nonempty relation. 
Note that we can interpret $S$ as the incidence matrix of a hypergraph $\H$, where $(v,e) \in S$ corresponds to a 1 in row $v$ and column $e$ and all other entries are 0.
We can define two simplicial complexes $V_S$ and $E_S$ from $S$ as follows. 
A simplex $\sigma \subset V$ is in  $V_S$ whenever there is an $e \in E$ such that $(v, e) \in S$ for all $v \in \sigma$. 
From the perspective of the incidence matrix, there is a column corresponding to an element $e$ such that the rows corresponding to elements of $\sigma$ are all 1 in that column (and there could be more 1s in that column).
On the other hand, a simplex $\tau \subset E$ is in $E_S$ whenever there is a $v \in V$ such that $(v,e) \in S$ for all $e \in \tau$.
There is a similar incidence matrix interpretation for $E_S$.
One can show that $V_S = \Delta(\H)$ and $E_S = \Delta(\H^*)$. 
The Dowker theorem states that $\hom{}{\bullet}(V_S) \cong \hom{}{\bullet}(E_S)$. 
It then follows directly that $\hom{\Delta}{\bullet}(\H) \cong \hom{\Delta}{\bullet}(\H^*)$.
\end{proof}

Next, we show that $\hom{\Delta}{\bullet} : \MHyp \rightarrow \Ab$ is a functor. 
As noted in Section \ref{sec:categories}, it is well known that simplicial homology is a functor from $\Simp$ to $\Ab$. 
Therefore, we will prove that $\Delta$ is a functor from $\MHyp$ to $\Simp$ and the two together give us the result that $\hom{\Delta}{\bullet}$ is a functor from $\MHyp$ to $\Ab$.

\begin{proposition}\label{prop:closure_functor}
The hypergraph upper closure operation is a functor $\Delta : \MHyp \rightarrow \Simp$.
\end{proposition}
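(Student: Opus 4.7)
The plan is to define $\Delta$ on morphisms of $\Hyp$ in the natural way, then verify that the result is genuinely a simplicial map and that the assignment respects identities and composition. Since $\Delta$ is already defined on objects (taking $\H = (V, E, \epsilon)$ to its upper closure, a simplicial complex on $V$), the essential content is the morphism assignment plus the two functor axioms.

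First, I would define the action on morphisms as follows: given a hypergraph morphism $(f, g) : (V, E, \epsilon) \to (V', E', \epsilon')$ in $\Hyp$, set $\Delta(f, g) := g : V \to V'$, viewed as a map between the base sets of $\Delta(\H)$ and $\Delta(\H')$. The hyperedge component $f$ is not needed for the simplicial-complex output, but the commuting square $\epsilon' \circ f = \po(g) \circ \epsilon$ from the definition of a $\Hyp$-morphism will be used to justify that $g$ is simplicial.

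Next, I would verify the simplicial-map condition. Take any $\sigma \in \Delta(\H)$. By definition of the upper closure, there exists a hyperedge $e \in E$ with $\sigma \subseteq \epsilon(e)$. Then
\[ \{g(v) : v \in \sigma\} \subseteq \{g(v) : v \in \epsilon(e)\} = \po(g)(\epsilon(e)) = \epsilon'(f(e)), \]
where the last equality uses the commuting diagram in the definition of $(f,g)$. Since $\epsilon'(f(e))$ is a hyperedge of $\H'$, every subset of it lies in $\Delta(\H')$, so $\{g(v) : v \in \sigma\}$ is a simplex of $\Delta(\H')$. This is exactly the definition of simplicial map given in Section~\ref{sec:simplicial}.

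Finally, I would check the two functor axioms. For identities, the identity morphism in $\Hyp$ on $\H$ is $(\id_E, \id_V)$, so $\Delta(\id_E, \id_V) = \id_V$, which is the identity simplicial map on $\Delta(\H)$. For composition, given $(f, g) : \H \to \H'$ and $(f', g') : \H' \to \H''$, the composite in $\Hyp$ is $(f' \circ f, g' \circ g)$, and $\Delta(f' \circ f, g' \circ g) = g' \circ g = \Delta(f', g') \circ \Delta(f, g)$, matching composition of simplicial maps. None of these steps poses a real obstacle; the only subtle point is recognizing that the commuting square in the definition of a $\Hyp$-morphism is precisely what is needed to promote the vertex map $g$ to a simplicial map, while the hyperedge map $f$ plays no further role once we have passed to the closure.
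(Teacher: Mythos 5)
Your proposal is correct and follows essentially the same route as the paper's proof: define $\Delta(f,g) := g$, use the commuting square $\epsilon'\circ f = \po(g)\circ\epsilon$ to show that the image of a simplex $\sigma \subseteq \epsilon(e)$ lands inside the simplex $\epsilon'(f(e))$, and note that identities and composition are inherited componentwise. Your explicit verification of the two functor axioms is slightly more detailed than the paper's, which simply states they are inherited from $\Hyp$, but the argument is the same.
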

\begin{proof}
The first observation is that $\Delta$ is a composition of the collapse functor, $Col:\MHyp\rightarrow\Hyp$, and the process of adding in sub-edges, which we have not yet formally defined as an object map and morphism map. We do so now for $\Hyp$ to $\Simp$ and prove that it is a functor. Then its composition with $Col$ will also be a functor.
\begin{samepage}
\begin{align*}
    \Delta : \Hyp& \rightarrow \Simp\\
        (V, E)& \mapsto (V, \{\emptyset \neq \sigma \subseteq V : \exists e \in E \text{ with }\sigma \subseteq e\}) \hspace{2em} \textrm{(object map)}\\
        f& \mapsto f \hspace{17em} \textrm{(morphism map)}
\end{align*}
\end{samepage}

The fact that $\Delta(Id_\Hyp) = Id_\Simp$ and $\Delta(f_1\circ f_2) = \Delta(f_1)\circ\Delta(f_2)$ are true by definition since the morphism map is the identity. To show $\Delta$ is a functor it is left to show that if $f$ is a morphism in $\Hyp$ from $\H_1=(V_1, E_1)$ to $\H_2=(V_2, E_2)$ then $\Delta(f)$ is a simplicial map from $\Delta(\H_1)$ to $\Delta(\H_2)$. 

Let $\sigma \in \Delta(\H_1)$, then there is an $e \in E_1$ such that $\sigma \subseteq e$. Since $f$ is a morphism in $\Hyp$ it must be that $\{f(v)\}_{v \in e}$ is an edge in $\H_2$. And of course $\{f(v)\}_{v \in \sigma}$ is a subset of this edge.
Note that since $\{f(v)\}_{v \in e}$ is an edge in $\H_2$ it is also a simplex in $\Delta(\H_2)$. Finally since any subset of a simplex is also a simplex, namely $\{f(v)\}_{v \in \sigma}$, $f$ is a simplicial map and we have the desired result.

    %\cEP{old}
    %The object map, $\Delta(\H)$, was defined earlier but we have not yet defined the map on morphisms. Let $(f, g)$ be a morphism in $\Hyp$ from $\H_1=(V_1, E_1, \epsilon_1)$ to $\H_2 = (V_2, E_2, \epsilon_2)$. Recall from Section \ref{sec:categories} that $f$ is a map on edges and $g$ is a map on vertices. Then we define $\Delta((f, g)) := g$. The fact that $\Delta(Id_\Hyp) = Id_\Simp$ and $\Delta((f_1, g_1)\circ (f_2, g_2)) = \Delta((f_1, g_1))\circ\Delta((f_2, g_2))$ are inherited from $\Hyp$. To show $\Delta$ is a functor it is left to show that $g$ is a simplicial map from $\Delta(\H_1)$ to $\Delta(\H_2)$. 
    %
    %Let $\sigma \in \Delta(\H_1)$, then there is an $e \in E_1$ such that $\sigma \subseteq \epsilon_1(e)$, and $f(e) \in E_2$. By the definition of morphisms in $\Hyp$ it must be that $\po(g)(\epsilon_1(e)) = \epsilon_2(f(e))$. The left side can be rewritten as 
    %$\po(g)(\{v : v \in \epsilon_1(e)\}) = \{g(v) : v \in \epsilon_1(e)\}$.
    %Since $\sigma \subseteq \epsilon_1(e)$ we can put this all together and see that
    %\[ \{g(v) : v \in \sigma\} \subseteq \epsilon_2(f(e)). \]
    %Note that since $f(e)$ is an edge in $\H_2$, $\epsilon_2(f(e))$ is a simplex in $\Delta(\H_2)$. Finally since any subset of a simplex is also a simplex, $g$ is a simplicial map and we have the desired result.
\end{proof}

We note that in \cite[Theorem 3]{robinson2022cosheaf} Robinson proved a related result replacing $\MHyp$ with the category of binary relations, $\Rel$. It is not difficult to show that $\MHyp$ is a subcategory of $\Rel$ (e.g., see \cite{green2025topological}). Therefore, Robinson's result is a stronger statement with Proposition \ref{prop:closure_functor} as a corollary.

Finally, we provide a connection between $\hom{\Delta}{\bullet}$ and the homology of the nerve of the hypergraph. 

\begin{proposition}
For $\H \in \MHyp$ the closure homology of $\H$ is isomorphic to the homology of its nerve, i.e., 
$\hom{\Delta}{\bullet}(\H) \cong \hom{}{\bullet}(\Nrv(\H))$.
\end{proposition}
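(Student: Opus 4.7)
The plan is to reduce this statement to the previous proposition by identifying $\Nrv(\H)$ with $\Delta(\H^*)$. Observe that by the definition of the dual, $\H^* = (E^*, V^*)$ has vertex set $E^*$ and hyperedges $v_i^* = \{e_j^* : v_i \in e_j\}$. A simplex $\sigma \subseteq E^*$ lies in $\Delta(\H^*)$ precisely when $\sigma \subseteq v_i^*$ for some $i$, which unpacks to the condition that there exists a vertex $v_i \in V$ with $v_i \in e_j$ for every $e_j^* \in \sigma$. This is equivalent to $\bigcap_{e_j^* \in \sigma} e_j \neq \emptyset$, which is exactly the defining condition for $\sigma \in \Nrv(\H)$ from Definition~\ref{def:nerve}.

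Thus as simplicial complexes on the base set $E$, we have the equality
\[ \Delta(\H^*) = \Nrv(\H). \]
Taking simplicial homology of both sides and chaining with the isomorphism $\hom{\Delta}{\bullet}(\H) \cong \hom{\Delta}{\bullet}(\H^*)$ from the previous proposition (which itself invokes Dowker duality) yields
\[ \hom{\Delta}{\bullet}(\H) \;\cong\; \hom{\Delta}{\bullet}(\H^*) \;=\; \hom{}{\bullet}(\Nrv(\H)), \]
as required.

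There is essentially no obstacle to overcome here: the key content is packaged in Dowker's theorem, which was already deployed for the dual proposition. The only step requiring care is the set-theoretic unfolding that identifies $\Delta(\H^*)$ with $\Nrv(\H)$, and this is a direct verification of defining conditions. One could alternatively prove the statement from scratch by appealing to Dowker's theorem directly, noting that the two simplicial complexes $V_S$ and $E_S$ associated with the incidence relation $S$ of $\H$ are precisely $\Delta(\H)$ and $\Nrv(\H)$; but routing through the previous proposition keeps the argument short and emphasizes the conceptual fact that the nerve construction is dual to the closure construction.
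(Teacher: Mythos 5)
Your proposal is correct and follows essentially the same route as the paper: both establish the identification $\Nrv(\H) = \Delta(\H^*)$ by unwinding the definitions (the paper via mutual containment, you via a chain of equivalences, which is the same content) and then invoke the Dowker-duality isomorphism $\hom{\Delta}{\bullet}(\H) \cong \hom{\Delta}{\bullet}(\H^*)$ from the preceding proposition. No gaps.
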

\begin{proof}
The proof of this proposition follows immediately from the claim that $\Nrv(\H) = \Delta(\H^*)$ and Dowker duality. 
Indeed if this claim is true then
\[ \hom{}{\bullet}(\Nrv(\H)) \stackrel{\text{(claim)}}{=} \hom{}{\bullet}(\Delta(\H^*)) \stackrel{\text{(def)}}{=} \hom{\Delta}{\bullet}(\H^*) \stackrel{\text{(Dowker)}}{\cong} \hom{\Delta}{\bullet}(\H). \]
To show $\Nrv(\H) = \Delta(\H^*)$, we argue mutual containment.
Let $\sigma \in \Nrv(\H)$, then by definition $\cap_{e \in \sigma} e \neq \emptyset$.
Let $v \in \cap_{e \in \sigma} e$, then there is a hyperedge in $\H^*=(E^*, V^*)$ corresponding to $v$ that contains all $e \in \sigma$ (possibly more).
Therefore $\sigma \in \Delta(\H^*)$, since $\sigma$ is a subset of a hyperedge in $\H^*$, and so $\Nrv(\H) \subseteq \Delta(\H^*)$.
Then, let $\tau \in \Delta(\H^*)$. 
By definition of the hypergraph upper closure, $\tau$ is a subset of a hyperedge in $\H^*$. 
Therefore, $\tau = \{e^*\} \subseteq v^*$ represents a set of hyperedges in $\H$ that all contain a vertex $v$.
This means that $\cap_{e^* \in \tau} e \neq \emptyset$ and so $\tau \in \Nrv(\H)$. 
This proves the reverse inclusion and therefore $\Nrv(\H) = \Delta(\H^*)$.
Note that this argument holds even if there are multi-edges in $\H$ or $\H^*$.
\end{proof}

A summary of the properties of $\hom{\Delta}{\bullet}$ is given in Figure \ref{fig:closure-homology}. 
Note that this is not a functorial diagram, it is on the level of the specific objects $\H$, $\H^*$, and their nerves, upper closures, and homology groups.
\begin{figure}[htb]
    \centering
    \includegraphics[width=0.7\textwidth]{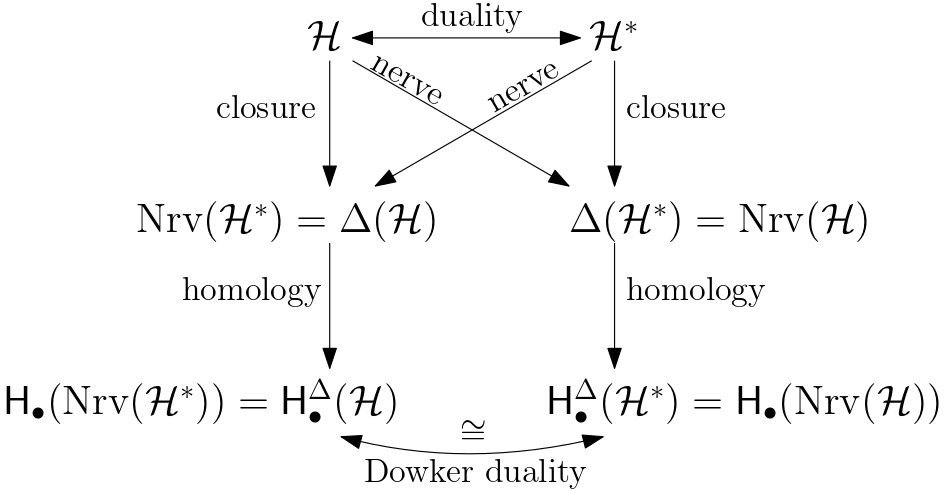}
    \caption{Summary of relationships between a hypergraph and its dual, their upper closures, nerves, and closure homology.}
    \label{fig:closure-homology}
\end{figure}
Similarly, we could also define and study the homology of the lower closure as $\hom{\delta}{\bullet}(\H) = \hom{}{\bullet}(\delta(\H))$, whose details are omitted here.

\subsection{Restricted barycentric subdivision homology}
\label{sec:ResBS}

In this subsection and the next, we introduce two new hypergraph homology theories originally defined by the authors. We first build a construction called the restricted barycentric subdivision (ResBS) of a hypergraph. 

\begin{definition}
\label{def:rbs}
Let $K=\{\sigma_i\}$ be a simplicial complex. 
Its \emph{barycentric subdivision}, $\bary(K)$, is a simplicial complex constructed with the set of simplices of $K$, $\{\sigma_i\}$, as the vertex set. 
There is a $k$-simplex $\{\sigma_{i_0}, \sigma_{i_1}, \ldots, \sigma_{i_k}\}$ in $\bary(K)$ whenever $\sigma_{i_0}\subset\sigma_{i_1}\subset \cdots \subset \sigma_{i_k}$.
The \emph{barycentric subdivision of a hypergraph} $\H$ is the barycentric subdivision of its upper closure, $\bary(\Delta(\H))$.
\end{definition}

An equivalent way to construct $\bary(K)$ is to build a graph using the simplices of $K$ as the vertex set, with an edge $(\sigma_i, \sigma_j)$ whenever $\sigma_i \subset \sigma_j$, and then form the \emph{clique complex} of this graph by replacing every $k$-clique with a $(k-1)$-simplex.

\begin{definition}
The \emph{restricted barycentric subdivision (ResBS)}, $\bary_{res}(\H)$, of a hypergraph $\H$ is the subcomplex of $\bary(\Delta(\H))$ induced by the vertices corresponding to hyperedges of $\H$. 
\end{definition}
\noindent We show examples of these objects for a single hypergraph in Figure~\ref{fig:rest_bary_ex}.
This hypergraph will serve as a running example to illustrate the various homology theories in subsequent subsections.

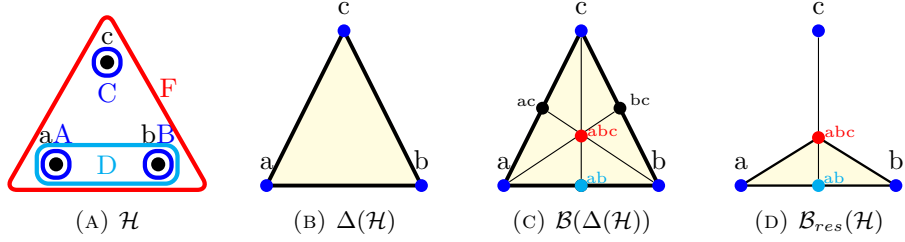
\begin{figure}[ht]
\begin{subfigure}{.24\textwidth}
  \centering
  \resizebox{1.00\textwidth}{!}{
  \begin{tikzpicture}[scale=0.6, ve/.style={fill=black,circle,scale=0.5},  he1/.style={draw = blue, line width=0.5mm, rounded corners,inner sep=2pt}]
    \node[ve, label={\small{c}}] (v1) at (1, 0) {};
    \node[color=blue] at (1, -0.6) {\small{C}};
    \node[ve, label={}] (v2) at (0, -2) {};
    \node[ve, label={}] (v3) at (2, -2) {};
      \node at ([shift={(90:0.6)}]v2){\small{a\textcolor{blue}{A}}};
    \node at ([shift={(90:0.6)}]v3){\small{b\textcolor{blue}{B}}};
    \node [he1,  fit=(v1)] {};
    \node [he1,  fit=(v2)] {};
    \node [he1,  fit=(v3)] {};
 \node [he1, draw=cyan,inner sep=4pt, fit=(v2) (v3)]{};
   \node[color=cyan] at (1,-2){\small{D}};
 \draw [rounded corners,line width=0.5mm, draw=red, scale=1] (1, 1) --(-1,-2.5)--(3,-2.5)--cycle;
   \node[color=red] at (2.2,-0.5){\small{F}};
\end{tikzpicture}
    }
  \caption{$\mathcal{H}$}
  \label{fig:g1}
\end{subfigure}
\begin{subfigure}{.24\textwidth}
  \centering
  \resizebox{.9\textwidth}{!}{
  \begin{tikzpicture}[scale=1, ve/.style={fill=black,circle,scale=0.5},   he1/.style={draw = blue, line width=0.5mm, rounded corners=5pt,inner sep=3pt}]
   \draw [line width=0.5mm,fill=yellow!15] (1, 0)--(0,-2)--(2,-2)--cycle;
     \node[ve, color=blue, label={c}] (v3) at (1, 0) {};
    \node[ve, color=blue, label={a}] (v1) at (0, -2) {};
    \node[ve, color=blue, label={b}] (v2) at (2, -2) {};
\end{tikzpicture}
    }
  \caption{$\Delta(\mathcal{H})$}
  \label{fig:upper-closure}
\end{subfigure}
\begin{subfigure}{.24\textwidth}
  \centering
  \resizebox{.9\textwidth}{!}{
  \begin{tikzpicture}[scale=1, ve/.style={fill=black,circle,scale=0.5},   he1/.style={draw = blue, line width=0.5mm, rounded corners=5pt,inner sep=3pt}]
 \draw [line width=0.5mm,fill=yellow!15] (1, 0)--(0,-2)--(2,-2)--cycle;
      \node[ve, color=blue, label={c}] (v3) at (1, 0) {};
    \node[ve, color=blue, label={a}] (v1) at (0, -2) {};
    \node[ve, color=blue, label={b}] (v2) at (2, -2) {};
    \node[ve, label={}] (v12) at (1, -2) {};
    \node[ve,  label={}] (v13) at (0.5, -1) {};
      \node at ([shift={(170:0.3)}]v13.10){\tiny{ac}};
    \node[ve, label={}] (v23) at (1.5, -1) {};   
  \node at ([shift={(30:0.2)}]v23.10){\tiny{bc}};
 \draw[-] (v3)--(v12); 
 \draw (v1)--(v23);
 \draw (v2)--(v13);
\node[ve, color=red, label={}] (v123) at (1, -1.36) {}; 
  \node[ve, color=cyan, label={}] (v12) at (1, -2) {};
     \node[color=cyan] at ([shift={(40:0.15)}]v12.10){\tiny{ab}};
  \node[color=red] at ([shift={(18:0.2)}]v123.10){\tiny{abc}};
\end{tikzpicture}
    }
  \caption{$\mathcal{B}(\Delta(\mathcal{H}))$}
  \label{fig:2-3}
\end{subfigure}
\begin{subfigure}{.24\textwidth}
  \centering
  \resizebox{.9\textwidth}{!}{
  \begin{tikzpicture}[scale=1, ve/.style={fill=black,circle,scale=0.5},   he1/.style={draw = blue, line width=0.5mm, rounded corners=5pt,inner sep=3pt}
];
      \draw[line width=0.3mm,fill=yellow!15] (0, -2)--(2, -2)--(1, -1.38)--cycle;
    \node[ve, color=blue, label={c}] (v3) at (1, 0) {};
    \node[ve, color=blue, label={a}] (v1) at (0, -2) {};
    \node[ve, color=blue, label={b}] (v2) at (2, -2) {};
    \node[ve, color=cyan, label={}] (v12) at (1, -2) {};
     \node[color=cyan] at ([shift={(40:0.15)}]v12.10){\tiny{ab}};
 \draw[-] (v3)--(v12); 
\node[ve, color=red, label={}] (v123) at (1, -1.38) {}; 
  \node[color=red]  at ([shift={(18:0.2)}]v123.10){\tiny{abc}};
\end{tikzpicture}
    }
  \caption{$\mathcal{B}_{res}(\mathcal{H})$}
  \label{fig:res}
\end{subfigure}
\caption{An illustration of (a) a hypergraph $\H$ transformed into (b) its upper closure $\Delta(\H)$, (c) barycentric subdivision $\bary(\Delta(\H))$, and (d) restricted barycentric subdivision $\bary_{res}(\H)$. Lower case letters represent vertices, whereas upper case letters represent hyperedges. Concatenations (e.g.,~ab, abc) in (c) and (d) represent simplices in $\Delta(\H)$ such as edges and triangles.}
  \label{fig:rest_bary_ex}
\end{figure}

Given a hypergraph with maximum hyperedge size $N$, the barycentric subdivision has at least $2^N$ vertices, one for each set in the simplex corresponding to the maximum hyperedge. 
Even when $N$ is moderately sized, creating the barycentric subdivision can be computationally intractable. 
Instead, since we are only interested in vertices that correspond to hyperedges there is an alternative way to construct $\bary_{res}(\H)$ without first constructing $\bary(\Delta(\H))$ which requires the concept of a partial order or poset.
A \emph{poset}, $\mathcal{P} = \langle P, \leq \rangle$, is a set $P$ together with a binary relation $\leq$ that is reflexive ($p\leq p$ for all $p\in P$), transitive ($p\leq q$ and $q\leq r$ implies $p \leq r$), and antisymmetric ($p\leq q$ and $q\leq p$ implies $p=q$).
For $\H=(V, E)$ in $\Hyp$ without empty edges (it will become clear soon why we disallow empty edges) let $\ECP(\H)$ be its partial order of edge containment, i.e., $\ECP(\H) = (E, \subseteq)$.
As in the definition of the barycentric subdivision of a simplicial complex (Definition~\ref{def:rbs}), we construct $\bary_{res}^\ECP(\H)$ from $\ECP(\H)$ by adding a $k$-simplex for each $e_{0} \leq e_{1} \leq \cdots \leq e_{k}$ (not necessarily saturated) chain in $\ECP(\H)$. This is also known as the \emph{order complex} of $\ECP(\H)$.
We write $\bary_{res}^\ECP(\H)$ as a composition $\bary_{res}^\ECP(\H) := \Ord(\ECP(\H))$ where $\Ord$ is the order complex construction.
The vertices of $\bary_{res}^\ECP(\H)$ represent the singleton chains in $\ECP(\H)$, or equivalently the edges of $\H$.

%\cEP{Need to point out here that if there are multi-edges then $\ECP(\H)$ is a preorder not a poset and Prop \ref{prop:bary_res_alt} is not quite true. There are some technicalities here that I'll fix when I fix the category definitions. Will end up saying that in the case of multi-edges the preorder version is what we want but the proofs become more complicated and are left for future work.}

\begin{proposition}
\label{prop:bary_res_alt}
Let $\H \in \Hyp$ be a hypergraph without multi-edges or empty edges. Then $\bary_{res}^\ECP(\H) = \bary_{res}(\H)$. 
\end{proposition}
\begin{proof}
The proof is straightforward.
By construction, and because we are assuming no multi-edges or empty edges, the vertex sets of $\bary_{res}^\ECP(\H)$ and $\bary_{res}(\H)$ are the same. 
Simplices in both represent a containment chain of sets. 
Therefore the structures must contain the same simplices.
\end{proof}

The ResBS is a case in which multi-edges and empty edges can cause complications.
If instead we start with an $\H$ in $\MHyp$ that has multi-edges then $\ECP(\H)$ will be a \emph{preorder} of edge containment\footnote{Consider edges $e\neq e'$ such that $\epsilon(e) = \epsilon(e')$. Then in $\ECP(\H)$ we will have $e \leq e'$ and $e' \leq e$ but $e \neq e'$.}. Additionally if $\H$ contains the empty set as a hyperedge then it will be a bottom element of $\ECP(\H)$ and cone off $\bary_{res}^\ECP$. In these cases it is not true that $\bary_{res}^\ECP(\H) = \bary_{res}(\H)$ since multi-edges are collapsed and empty edges are removed when creating $\Delta(\H)$ but not when creating $\ECP(\H)$. 
There are two possible remedies. First, one could consider $\bary_{res}^\ECP$ as the true definition of the ResBS, taking the order complex of the preorder of edge containment. Or we can first collapse $\H$ and remove empty edges and then proceed with either $\bary_{res}^\ECP$ or $\bary_{res}$ (due to Proposition \ref{prop:bary_res_alt}). 
Computationally it is preferable to use $\bary_{res}^\ECP$ and avoid constructing $\bary(\Delta(\H))$, so the question is whether or not to collapse edges in a multi-hypergraph or remove empty edges. 
Not collapsing may be desirable for real data as it allows for distinguishing between hypergraphs that have differing numbers of the same hyperedge. However, keeping the multi-edges requires working through the preorder category which complicates category theory arguments.
Empty edges also cause difficulty because they cone off $\bary_{res}^\ECP$ thus destroying any homological structure that may be present if the empty edge is ignored.
In the remainder of this section we will proceed assuming hypergraphs are in $\Hyp$ and do not contain empty edges. We leave it as an open question to consider the category theoretical implications of taking $\ECP(\H)$ for hypergraphs in $\MHyp$ without first collapsing edges and removing empty edges.

Since $\bary_{res}^\ECP(\H)$ and $\bary_{res}(\H)$ are equal for collapsed hypergraphs without empty edges, we use the notation $\bary_{res}(\H)$ but construct it via $\ECP(\H)$ to avoid computational blowup. 
Now we are ready to define the ResBS homology of a hypergraph.

\begin{definition}
The \emph{ResBS homology} of a hypergraph $\H \in \Hyp$ without empty edges is the homology of its ResBS, $\bary_{res}(\H)$,  i.e., $\hom{res}{\bullet}(\H) := \hom{}{\bullet}(\bary_{res}(\H))$.
\end{definition}

The ResBS homology was used in \cite{jenne2023stepping} to study hypergraphs constructed from cyber log data. 
The authors referred to the ResBS as the \emph{nesting complex}.
They studied a specific dataset and observed that homological features in dimension 1 often correspond to adversary behavior. The Betti numbers, $\betti{res}{k}$ (for $k=0,1$), observed during the benign time periods are smaller than those observed during the anomalous time periods.

Next we show that the number of connected components of $\bary_{res}(\H)$ is bounded above by the number of toplexes of $\H$. 
In our proof, we will need the notion of a maximal component of a poset.

\begin{definition}
Let $P=(X, \leq)$ be a poset. A set of vertices $C \subseteq X$ is a \emph{component} if, for every pair $x, y \in C$, there is a sequence $x=x_0, x_1, \ldots, x_k=y$ such that either $x_i \leq x_{i+1}$ or $x_i \geq x_{i+1}$ for all $0\leq i\leq k-1$. A component is \emph{maximal} if there is no other component $C'$ such that $C \subsetneq C' \subseteq X$.
\end{definition}

\begin{proposition}\label{prop:res_max}
Let $\H \in \Hyp$ without empty edges, then $\betti{res}{0}(\H) \leq |top(\H)|$ where $top(\H)$ is the set of toplexes of $\H$.
Equality holds if there are no hyperedges contained within a pairwise toplex intersection.
\end{proposition}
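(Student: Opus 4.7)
The plan is to work with the alternative description of $\bary_{res}(\H)$ as the order complex $\Ord(\ECP(\H))$ afforded by Proposition~\ref{prop:bary_res_alt}. Since $\betti{res}{0}(\H)$ counts the connected components of $\bary_{res}(\H)$, and connectivity of a simplicial complex is detected by its $1$-skeleton, this count equals the number of maximal components of the poset $\ECP(\H)$ in the sense of the definition just preceding the proposition: two hyperedges lie in the same component exactly when they are joined by a zigzag sequence of pairwise comparable elements.

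For the inequality, I would first observe that every hyperedge $e \in E$ is contained in some toplex, since $\ECP(\H)$ is a finite poset and every element can be extended to a maximal one. Therefore $e$ lies in the same component as that toplex, and consequently each maximal component of $\ECP(\H)$ contains at least one element of $top(\H)$. This immediately yields $\betti{res}{0}(\H) \leq |top(\H)|$.

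For the equality claim, read the hypothesis as: for every pair of distinct toplexes $t_1, t_2 \in top(\H)$, there is no hyperedge $e \in E$ with $e \subseteq t_1 \cap t_2$. Under this assumption, each hyperedge is contained in a \emph{unique} toplex, because $e \subseteq t_1$ and $e \subseteq t_2$ with $t_1 \neq t_2$ would produce $e \subseteq t_1 \cap t_2$, contradicting the assumption (the case where $e$ itself is a toplex is handled by its maximality). Define $\tau : E \to top(\H)$ sending each $e$ to this unique toplex. The key observation to close the argument is that $\tau$ is constant along every comparability in $\ECP(\H)$: if $e \subseteq e'$ then $e \subseteq e' \subseteq \tau(e')$, so uniqueness forces $\tau(e) = \tau(e')$, with a symmetric argument if $e' \subseteq e$. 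Hence $\tau$ is constant on each poset component, but $\tau(t) = t$ for any toplex $t$, so distinct toplexes lie in distinct components, producing the reverse inequality.

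The main subtlety will be pinning down the precise interpretation of the phrase ``no hyperedges contained within a pairwise toplex intersection''---once it is understood as a statement about \emph{hyperedges} (rather than arbitrary subsets) sitting inside $t_1 \cap t_2$, the remaining reasoning is routine manipulation of the containment poset. A minor sanity check is that $\tau$ behaves correctly when applied to a hyperedge that happens itself to be a toplex, and this is immediate from toplex maximality.
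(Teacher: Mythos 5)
Your proof is correct. For the inequality, your argument is essentially the paper's: both identify $\betti{res}{0}(\H)$ with the number of zigzag components of $\ECP(\H)$, observe that maximal elements of $\ECP(\H)$ are exactly the toplexes, and conclude that each component contains at least one toplex. For the equality claim, however, you take a genuinely different route. The paper argues by contradiction: it assumes a component contains two maximal elements $T_1, T_2$, takes a minimal-length zigzag between them, locates the first ``valley'' $e_{i-1} \geq e_i \leq e_{i+1}$, and shows that either the path can be shortened or $e_i$ sits below two distinct toplexes, contradicting the hypothesis. You instead construct the map $\tau : E \to top(\H)$ sending each hyperedge to its unique containing toplex (uniqueness being exactly what the hypothesis guarantees), verify that $\tau$ is invariant along comparabilities and hence constant on components, and note $\tau(t)=t$ for toplexes. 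Your version buys a cleaner, non-extremal argument---no minimal-path bookkeeping, and the component invariant $\tau$ makes transparent that each component contains \emph{exactly} one toplex, which is the real content of the sharpness claim. The paper's version has the minor virtue of making explicit where the hypothesis is invoked geometrically (at a valley of a zigzag), but your argument uses the same underlying fact---a hyperedge below two distinct toplexes violates the hypothesis---in a more structured way. Both proofs share the same unaddressed edge cases (multi-edges making $\ECP(\H)$ only a preorder), so no penalty there.
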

\begin{proof}
We first prove the bound and then show that it is sharp.
A hyperedge $e$ is a maximal element of $\ECP(\H)$ if and only if it is a toplex in $\H$. 
Each maximal component of $\ECP(\H)$ has at least one maximal element.
There is a bijection between components of $\bary_{res}(\H)$ and components of $\ECP(\H)$.
Therefore, the number of components of $\bary_{res}(\H)$, denoted as $\betti{res}{0}(\H)$, is at most the number of toplexes of $\H$, $|top(\H)|$.

To prove sharpness, we need to show that if there are no edges in the intersection of two toplexes, then every component has exactly one maximal element. 
We prove this by contradiction.
Assume that there is a component with at least two maximal elements, $T_1$ and $T_2$. 
Then by definition of a component there must be a path $T_1 = e_0, e_1, \ldots, e_k = T_2$ such that $e_i \leq e_{i+1}$ or $e_i \geq e_{i+1}$ for all $0\leq i \leq k-1$. 
Assume it is a minimum length path.
Start at $e_0$ and continue until the first $\leq$, and say this happens at $i$, so that $e_{i-1} \geq e_i \leq e_{i+1}$.
Since we are in a component, either $e_{i+1} \leq T_1$, or $e_{i+1} \leq T_2$, or $e_{i+1} \leq T$ for some other maximal element $T$ in the component. 
In the first case we can make a shorter path $e_0, e_{i+1}, \ldots, e_k$ which is a contradiction to the path being minimal.
In the last two cases we have that $e_i$ is less than both $T_1$ (via the descending chain $T_1 = e_0 \geq e_1 \geq \cdots e_{i-1} \geq e_i$) and either $T$ or $T_2$.
This is a contradiction to the assumption that there are no edges in the intersection of two toplexes.
Therefore, every component has exactly one maximal element giving us the desired equality, $\betti{res}{0}(\H) = |top(\H)|$.
\end{proof}

A stronger result can be found in Christopher Potvin's PhD thesis \cite[Theorem 3.2.4]{potvin2023hypergraphs}, namely that $\betti{res}{0}$ is equal to the number of \emph{fence components} of the hypergraph. A \emph{fence} in a hypergraph is a walk $e_{1}, e_{2}, \ldots, e_{k}$ such that for all $e_{j}$, either $e_{j-1} \subset e_{j} \supset e_{j+1}$ or $e_{j-1} \supset e_{j} \subset e_{j+1}$. Then a fence component of a hypergraph $\H$ is a maximal set of hyperedges such that for any pair of hyperedges $e$ and $f$, there is a fence between $e$ and $f$.

We close this section with a proof that the ResBS construction via the order complex of $\ECP(\H)$ is a functor from $\Hyp$ to $\Simp$.
As in the discussion around Proposition~\ref{prop:closure_functor}, since simplicial homology is a functor from $\Simp$ to $\Ab$, we have that $\hom{}{\bullet}\circ\bary_{res}^\ECP$ is a functor from $\Hyp$ to $\Ab$.

%\cEP{I think empty edges don't actually cause problems in the following proof. But in everything else in this section we're restricting to hypergraphs without empty edges and it might be confusing to allow them in this proof. Or maybe we should explicitly say $\bary_{res}^\ECP$ instead of just $\bary_{res}$ in the statement of the proposition and note afterwards that because of the equivalence on $\Hyp$ without empty edges $\bary_{res}$ is a functor on that full subcategory.}

\begin{proposition}
\label{prop:bary_res_functor}
    $\bary_{res}^\ECP$ is a functor from $\Hyp$ to $\Simp$.
\end{proposition}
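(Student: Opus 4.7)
The plan is to proceed exactly as in Proposition \ref{prop:closure_functor}: define the object map as already given ($\H \mapsto \bary_{res}(\H)$), and for a morphism $(f, g) : \H_1 \to \H_2$ in $\Hyp$ (with $\H_i = (V_i, E_i, \epsilon_i)$), set $\bary_{res}((f, g)) := f$, the edge component. Since the vertex set of $\bary_{res}(\H_i)$ is $E_i$ (via Proposition \ref{prop:bary_res_alt}, working with $\bary_{res}^\ECP$), this is type-correct. Functoriality on identities and composition is inherited from the fact that $(f, g)$ composes componentwise in $\Hyp$, so $\bary_{res}(Id) = Id$ and $\bary_{res}((f_1,g_1)\circ(f_2,g_2)) = f_1 \circ f_2 = \bary_{res}((f_1,g_1))\circ\bary_{res}((f_2,g_2))$.

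The main content, then, is to verify that $f$ is a simplicial map from $\bary_{res}(\H_1)$ to $\bary_{res}(\H_2)$. By Proposition \ref{prop:bary_res_alt}, a simplex of $\bary_{res}(\H_1)$ corresponds to a chain $e_0 \subseteq e_1 \subseteq \cdots \subseteq e_k$ of distinct edges in $\ECP(\H_1)$, where containment is interpreted via their vertex sets: $\epsilon_1(e_i) \subseteq \epsilon_1(e_j)$ whenever $i \leq j$. The key step is to show that $\{f(e_0), f(e_1), \ldots, f(e_k)\}$, after collapsing repeated elements, is a chain in $\ECP(\H_2)$, hence a simplex of $\bary_{res}(\H_2)$.

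This uses the commutative square from the definition of a morphism in $\Hyp$, namely $\epsilon_2 \circ f = \po(g) \circ \epsilon_1$. For any $i \leq j$, we have
\[ \epsilon_2(f(e_i)) = \po(g)(\epsilon_1(e_i)) = \{g(v) : v \in \epsilon_1(e_i)\}, \]
and analogously for $e_j$. Because $\epsilon_1(e_i) \subseteq \epsilon_1(e_j)$, taking images under $g$ preserves the inclusion, so $\epsilon_2(f(e_i)) \subseteq \epsilon_2(f(e_j))$, i.e., $f(e_i) \leq f(e_j)$ in $\ECP(\H_2)$. Hence the images form a (possibly degenerate) ascending chain in $\ECP(\H_2)$.

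The one subtlety, which I expect to be the main (albeit minor) obstacle, is that $f$ may identify distinct edges: one can have $f(e_i) = f(e_j)$ for $i \neq j$. In that case the image as a multiset has repetitions, but as a \emph{set} it is a strictly ascending chain in $\ECP(\H_2)$, and thus still a simplex of $\bary_{res}(\H_2)$. This matches the convention used for simplicial maps in Section \ref{sec:simplicial}, which only requires the image \emph{set} $\{f(x) : x \in \sigma\}$ to be a simplex. With this observation the verification is complete, and composing with the standard simplicial homology functor yields that $\hom{res}{\bullet} = \hom{}{\bullet}\circ\bary_{res}$ is a functor $\Hyp \to \Ab$.
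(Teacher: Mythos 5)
Your proposal is correct and follows essentially the same route as the paper: the key step in both is using the commutative square $\epsilon_2 \circ f = \po(g)\circ\epsilon_1$ to show that $f$ preserves edge containment, hence sends chains in $\ECP(\H_1)$ to (possibly degenerate) chains in $\ECP(\H_2)$. The only difference is presentational—the paper factors $\bary_{res} = \Ord\circ\ECP$ through an explicit category $\Po$ of posets and order-preserving maps, whereas you verify the simplicial-map condition directly (and your explicit handling of repeated images under $f$ is a nice touch the paper leaves implicit).
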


% \assn{Bei}{Go over the proof. Might end up citing Robby instead.}
\begin{proof}
Since we can write $\bary_{res}^\ECP$ as the composition $\Ord \circ \ECP$, we can show that $\bary_{res}^\ECP$ is a functor by showing that both $\Ord : \Po \rightarrow \Simp$ and $\ECP : \Hyp \rightarrow \Po$ are functors.
First we must define the category $\Po$. The objects in $\Po$ are all finite posets and the morphisms are order preserving maps. In other words, if $F : P \rightarrow Q$ for $P, Q \in \Po$ then $p \leq_P p'$ implies $F(p) \leq_Q F(p')$. 

We have already defined the object map for $\ECP$, taking a hypergraph to its edge containment poset. Given two hypergraphs $\H_1=(V_1, E_1)$ and $\H_2=(V_2, E_2)$ and a map $f : V_1 \rightarrow V_2$ in $\Hyp$ we define $\ECP(f) := \po(f)$, the induced map on edges. 
We must show that $\po(f) : \ECP(\H_1) \rightarrow \ECP(\H_2)$ is a morphism in $\Po$, i.e., an order preserving map. 
Let $e_1 \leq_{\ECP(\H_1)} e_2$. Then $e_1 \subseteq e_2$ and $\po(f)(e_1) = \{f(v) : v \in e_1\} \subseteq \{f(v) : v \in e_2 \} = \po(f)(e_2)$.
Note that if $e_1 = \emptyset$ this logic still holds as $\po(f)(\emptyset) = \emptyset$.
Therefore, $\po(f)(e_1) \leq_{\ECP(\H_2)} \po(f)(e_2)$ as desired. Since $\po$ is a functor the identity and composition properties are inherited from $\Set$.

The object map for $\Ord$ takes posets to their order complex. Since the elements of a poset are the vertices of its order complex, the morphism map for $\Ord$ is the identity. To prove that $\Ord$ is a functor we must show that an order preserving poset map takes chains to chains. But this is immediate from the definition. Indeed, if $m : P \rightarrow Q$ is an order preserving map and $p_1 \leq_P p_2 \leq_P \cdots \leq_P p_k$, then $m(p_1) \leq_Q m(p_2) \leq_Q \cdots \leq_Q m(p_k)$. Since chains in the poset and simplices in the order complex are in bijection, this completes the proof.

Since both $\ECP$ and $\Ord$ are functors, their composition $\bary_{res}^\ECP$ is a functor from $\Hyp$ to $\Simp$.
\end{proof}

We thank Robby Green for discussions regarding this proof. Notice that for this proof we explicitly allowed hypergraphs to have empty edges as it does not cause complications. Since $\bary_{res}^\ECP$ and $\bary_{res}$ are equivalent on hypergraphs in $\Hyp$ without empty edges a corollary of Proposition \ref{prop:bary_res_functor} would be that $\bary_{res}$ is a functor from a full subcategory of $\Hyp$ where the objects do not contain empty edges to $\Simp$, and therefore $\hom{res}{\bullet}$ is a functor from that subcategory to $\Ab$.

We close this section by making a connection to a well-known relationship between finite topological spaces and finite posets. 
Specifically, there is a one-to-one correspondence between finite $T_0$-spaces (for every pair of points at least one of them has a neighborhood that does not contain the other) and finite posets. See \cite{stong1966finite}, or the more recent \cite[Section 4]{barmak2012strong}, for details on this correspondence. 
Moreover, McCord showed that finite posets (via their corresponding $T_0$-spaces) are weakly homotopy equivalent to their order complexes \cite{mccord_singular_1966}. Thus, they have isomorphic homology groups (singular homology for the finite posets, simplicial homology for the order complex).
The connection to this survey is via $\ECP(\H)$, the finite poset corresponding to $\H$, and its order complex, $Ord(\ECP(\H)) = \bary_{res}^\ECP(\H)$. 
These works on finite topologies then imply that the singular homology of the $T_0$-space associated to $\ECP(\H)$ is isomorphic to $\hom{res}{\bullet}(\H)$. In this survey we are focusing on simplicial, relative, and chain complex homology as those are more computable than singular homology. We invite interested readers to refer to the references in this paragraph for more details related to these connections.

\subsection{Relative barycentric subdivision homology}
\label{sec:RelBS}

The ResBS described in the prior subsection is concerned with only the portion of the barycentric subdivision that corresponds to hyperedges that are present in the hypergraph.
However, this construction loses any information about structure that might be contained in the \emph{missing} hyperedges. 
For example, consider a very simple hypergraph consisting only of the hyperedge $\{a,b,c\}$. 
The ResBS is only a single vertex representing that edge, and this is the same as the ResBS for any other hypergraph with a single hyperedge.
However, there is homological structure in the missing hyperedges, that is, they form an open triangle.
To address that issue, we introduce the relative barycentric subdivision (RelBS) homology of a hypergraph.
First, we need to define the missing subcomplex.
\begin{definition}
Let $\H=(V,E)$ be a hypergraph and $\bary(\Delta(\H))$ its barycentric subdivision. 
The \emph{missing subcomplex}, $\missing(\H)$, is the subcomplex of $\bary(\Delta(\H))$ induced by the vertices that represent sets that are not in $E$ (i.e., those sets that are not hyperedges of $\H$).
\end{definition}
Given this definition for $\missing(\H)$, we can define the RelBS homology using relative homology.
\begin{definition}
The \emph{RelBS homology} of a hypergraph $\H=(V, E)$ is the homology of $\bary(\Delta(\H))$ relative to $\missing(\H)$, i.e., $$\hom{rel}{\bullet}(\H) := \hom{}{\bullet}(\bary(\Delta(\H)), \missing(\H)).$$
\end{definition}

Unlike the ResBS homology, the RelBS homology does not have a version for $\MHyp$ without collapsing multi-edges and removing empty edges. 
It relies on simplicial relative homology and therefore $\Delta(\H)$ and $\missing(\H)$ must be simplicial complexes.

Intuitively, as discussed in Section \ref{sec:rel_hom}, we can consider $\hom{rel}{\bullet}$ as the reduced homology of the cell complex formed by collapsing all of the faces in $\missing(\H)$ down to a single point in $\bary(\Delta(\H))$. 
We still lose the structure contained within $\missing(\H)$ but we gain information about how the existing hyperedges are related via missing hyperedges.
For example, paths between existing subedges that transit through missing subedges manifest, in some cases, as $\hom{rel}{1}$ loops. 

\begin{example}[Figure~\ref{fig:rest_bary_ex} running example]
    To simplify notation, here and in later examples we use the notation $\langle X \rangle$ to be the vector space generated by the set $X$. 
    Consider the hypergraph $\H$ in Figure~\ref{fig:rest_bary_ex}.
    It is not difficult to see that $\betti{rel}{1} = 1$. 
    The missing subcomplex is $\missing(\H) = \{[ac], [bc]\}$ which essentially identifies those two vertices via the quotient operation.
    Since the dimension of $\missing(\H)$ is 0 we have $\Cgroup_2(\bary(\Delta(\H)), \missing(\H)) = \Cgroup_2(\bary(\Delta(\H)))$ and $\Cgroup_1(\bary(\Delta(\H)), \missing(\H)) = \Cgroup_1(\bary(\Delta(\H)))$. But $\Cgroup_0(\bary(\Delta(\H)), \missing(\H)) = \langle a, b, c, ab, ac, bc, abc \rangle / \langle ac, bc \rangle$. 
    To compute $\hom{1}{rel}(\H)$ we need $\ker \bdr_1$ and $\im \bdr_2$ in this relative chain complex. The image of $\bdr_2$ is generated by the boundaries of the six 2-simplices in $\bary(\Delta(\H))$. If we were just in $\bary(\Delta(\H))$ the kernel of $\bdr_1$ would be the same and there would be no homology in dimension 1. But since $\Cgroup_0$ is a quotient, in this case there is more in the kernel. In particular, $\bdr_1([ac, abc] + [abc, bc]) = (ac-abc)+(abc-bc) = ac-bc$ which is equivalent to zero since we quotient by $\langle ac, bc \rangle$.
    Therefore, the 1-chain $[ac, abc] + [abc, bc]$ in $\Cgroup_1(\bary(\Delta(\H)))$ is in the kernel of $\bdr_1$, but it is not in the image of $\bdr_2$. To complete this example one can show that this is the only additional basis element of $\ker\bdr_1$ (anything else would need to be a path between $ac$ and $bc$, which would be homologous by adding linear combinations of boundaries of elements of $\im\bdr_2$) and thus $\betti{rel}{1} = 1$.
\end{example}

\begin{example}
For another case in which structure is discovered via $\hom{rel}{\bullet}$ but not $\hom{res}{\bullet}$, consider the hypergraph, $\H$, with only one 3-edge, $E = \{\{a,b,c\}\}$, as introduced in the opening paragraph of this section. 
This hypergraph has the same $\bary(\Delta(\H))$ as in Figure~\ref{fig:rest_bary_ex}(c).
Informally (we will provide the details next), the quotient of $\bary(\Delta(\H))$ by $\missing(\H)$, which is induced by the vertices $\{a, b, c, ab, ac, bc\}$, now forms a hollow sphere as the entire boundary of $\bary(\Delta(\H))$ is collapsed to a single point.
However, if we were to add just a single hyperedge so that, for example, $E(\H') = \{\{a,b,c\}, \{a\}\}$, we would still have the same $\bary(\Delta(\H))$ and trivial $\hom{res}{\bullet}$ ($\bary_{res}(\H)$ is two vertices connected by an edge). But now $\missing(\H')$ is induced by the vertices $\{b, c, ab, ac, bc\}$ making $\hom{rel}{\bullet}$ also trivial because only part of the boundary of $\bary(\Delta(\H'))$ is identified.

The above provides the intuition, now we give the details. In both $\H$ and $\H'$ the $\Cgroup_2$ relative chain group is equal to the $\Cgroup_2$ chain group for the barycentric subdivision because the missing subcomplexes have no 2-simplices,
\[\Cgroup_2(\bary(\Delta(\H)), \missing(\H)) = \Cgroup_2(\bary(\Delta(\H)))/\Cgroup_2(\missing(\H)) = \Cgroup_2(\bary(\Delta(\H)))/0 = \Cgroup_2(\bary(\Delta(\H))).\]
$\Cgroup_2(\bary(\Delta(\H)))$ is induced by all six of the 2-simplices in $\bary(\Delta(\H))$ (and similarly for $\H'$).
The difference is in $\Cgroup_1$ and $\Cgroup_0$ where the missing subcomplex is different for $\H$ and $\H'$.
One can derive the following:
\begin{align*}
    \Cgroup_1(\bary(\Delta(\H)), \missing(\H)) &= \langle [ac, abc], [bc, abc], [ab, abc], [a, abc], [b, abc],[c,abc]\rangle\\
    \Cgroup_0(\bary(\Delta(\H)), \missing(\H)) &= \langle abc \rangle\\
    \Cgroup_1(\bary(\Delta(\H')), \missing(\H')) &= \langle [ac, abc], [bc, abc], [ab, abc], [a, abc], [b, abc],[c,abc], [a, ab], [a, ac]\rangle\\
    \Cgroup_0(\bary(\Delta(\H')), \missing(\H)') &= \langle abc, a \rangle
\end{align*}

To compute $\hom{rel}{2}$ for $\H$ and $\H'$ we need $\ker\bdr_2$ and $\im\bdr_3$. For both $\H$ and $\H'$, $\im\bdr_3=0$ since there are no 3-simplices. But the kernel of $\bdr_2$ is different. For a 2-chain, $c$, to be in the kernel we need $\bdr_2(c) = 0$. We will show the general form of the image of $\bdr_2$ and then determine what coefficients of $c$ make that image 0. Let $c=\alpha_1 [c, ac, abc] + \alpha_2 [c, bc, abc] + \beta_1 [a, ac, abc] + \beta_2 [a, ab, abc] + \gamma_1 [b, ab, abc] + \gamma_2 [b, bc, abc]$ be a generic 2-chain in $\Cgroup_2$, then
\begin{align*}
    \bdr_2(c) =& \alpha_1 ( [ac, abc]-[c, abc]+[c, ac])+\alpha_2 ( [bc, abc]-[c, abc]+[c, bc]) +\\
     &+\beta_1 ( [ac, abc]-[a, abc]+[a, ac])+\beta_2 ( [ab, abc]-[a, abc]+[a, ab]) +\\
     &+\gamma_1( [ab, abc]-[b, abc]+[b, ab])+\gamma_2( [bc, abc]-[b, abc]+[b, bc])\\
    =& [ac, abc](\alpha_1+\beta_1) + [bc, abc](\alpha_2+\gamma_2) + [ab, abc](\beta_2+\gamma_1)+\\
     &-[c, abc](\alpha_1+\alpha_2) -[a, abc](\beta_1+\beta_2) -[b, abc](\gamma_1+\gamma_2)+\\
     &+\alpha_1[c, ac] + \alpha_2[c, bc] + \beta_1[a, ac] + \beta_2[a, ab] + \gamma_1[b, ab] + \gamma_2[b, bc]
\end{align*}
In both $\Cgroup_1(\bary(\Delta(\H))$ and $\Cgroup_1(\bary(\Delta(\H'))$ some of these summands can be wiped away because of the quotient. In the case of $\H$ we get
\begin{align*}
\bdr_2(c) =& [ac, abc](\alpha_1+\beta_1) + [bc, abc](\alpha_2+\gamma_2) + [ab, abc](\beta_2+\gamma_1) + \\
  &-[c, abc](\alpha_1+\alpha_2) -[a, abc](\beta_1+\beta_2) -[b, abc](\gamma_1+\gamma_2).
\end{align*}
We see that letting $\alpha_i=\beta_i=\gamma_i=1$ for $i=1,2$ yields a $c\neq0$ for which $\bdr_2(c)=0$. Thus, $\ker\bdr_2$ is $\Z/2\Z$ for $\H$ and $\hom{rel}{2}(\H) = \Z/2\Z$.

In the case of $\H'$ we get 
\begin{align*}
\bdr_2(c) =& [ac, abc](\alpha_1+\beta_1) + [bc, abc](\alpha_2+\gamma_2) + [ab, abc](\beta_2+\gamma_1) + \\
  &-[c, abc](\alpha_1+\alpha_2) -[a, abc](\beta_1+\beta_2) -[b, abc](\gamma_1+\gamma_2) + \\
  &+\beta_1[a, ac] + \beta_2[a, ab].
\end{align*}
Here we must have $\beta_1=\beta_2=0$ which forces all other coefficients to be 0. Thus, $\ker\bdr_2=0$ for $\H'$ and $\hom{rel}{2}(\H') = 0$.

Similar computations can be done to show $\hom{rel}{1} = \hom{rel}{0} = 0$ for both $\H$ and $\H'$.
\end{example}

The recent thesis by Potvin also provides many theoretical results on the RelBS homology including a Mayer-Vietoris theorem \cite[Theorems 4.1.1 and 4.1.3]{potvin2023hypergraphs} and computational algorithms that bypass construction of $\Delta(\H)$ \cite[Section 5.2]{potvin2023hypergraphs}. 

Whether RelBS is a functor from $\Hyp$ to $\Ab$ has not yet been studied. However, even in the simple case of vertex identity maps and edge set inclusion, as would be typical in a hypergraph filtration, the barycentric subdivision can stay the same or grow while the missing subcomplex can shrink or grow depending on whether sub-edges or toplexes are added. Intuitively, this level of complexity already in the simple case of hypergraph inclusion leads us to believe functoriality is either not true or complicated, and therefore we leave it as an open question.

%If hypergraph morphisms are restricted to vertex identity maps (implying $E_1 \subseteq E_2$) as would be typical in a hypergraph filtration, the barycentric subdivision remains constant while the missing subcomplex shrinks. The quotient structure would then grow in a straightforward way. But this is merely intuition in the special case of hypergraph inclusion. We leave it as an open question to prove whether or not relative barycentric subdivision homology is a functor.

\subsection{Polar complex}
\label{sec:polar}
%\cEP{Need to address multi-edges and empty edges in this section.}
%\cLZ{I don't have access to this paper, but this the def number in the arxiv version}
The polar complex was originally introduced in \cite[Definition 4]{itskov2020hyperplane} as a simplicial complex to study \emph{combinatorial codes} that arise from feedforward neural networks. The authors define a \emph{codeword}, $\sigma \subseteq \{1, \ldots, n\}=[n]$, and a \emph{combinatorial code} as a set of codewords, $\mathcal{C} \subseteq 2^{[n]}$. But this can be thought of as a hypergraph where $V=[n]$ and $E=\mathcal{C}$. Note that this definition allows the empty codeword (empty edge) but does not allow multi-edges so a combinatorial code is equivalent to a hypergraph in $\Hyp$. While the original paper uses the polar complex of a code to study stable hyperplane codes, we observe that the polar complex is yet another way to transform a hypergraph into a simplicial complex. It focuses not only on which vertices are in each hyperedge but also which vertices are \emph{not} in each hyperedge. We introduce it here using the language of hypergraphs.
\begin{definition}
\label{def:polar}
    Let $\H=(V, E)$ be a hypergraph. Define $\overline{V} := \{\overline{v} : v \in V\}$ to be a second copy of $V$ which can be distinguished from $V$ in notation. For every $e \in E$ define 
    \[ \Sigma(e) = \{v : v \in e\} \sqcup \{ \overline{v} : v \not\in e\} = e \sqcup \overline{V\setminus e}.\]
    Then the \emph{polar complex} of $\H$, $\pol(\H)$, is defined as the upper closure of $\{ \Sigma(e)\}$, that is, 
    \[ \pol(\H) = \Delta(\{ \Sigma(e) : e \in E \}).\]
\end{definition}
\noindent Since $\Gamma(\H)$ is an upper closure, each simplex must be unique; therefore, if $\H$ had multi-edges, the duplicate sets in $\Sigma(\H)$ would be collapsed when forming $\Gamma(\H)$. We may assume, then, that hypergraphs, like combinatorial codes, are in $\Hyp$ for the purposes of the polar complex.

The polar complex is a pure $(n-1)$-dimensional simplicial complex on the set $V\sqcup \overline{V}$, where $n=|V|$. (By \emph{pure}, we mean that each maximal simplex has the same dimension, namely, $n-1$.) Indeed, each $\Sigma(e)$ has size equal to $|V|$ since each vertex is in $\Sigma(e)$ either with or without a ``bar.'' 

\begin{definition}
The \emph{polar complex homology} of $\H$ is the homology of the polar complex of $\H$, i.e., 
\[\hom{pol}{\bullet}(\H) := \hom{}{\bullet}(\pol(\H)).\]
\end{definition}

It is not difficult to show some simple properties of the polar complex. For example, if $E(\H_1) \subseteq E(\H_2)$, then it must be the case that $\Gamma(\H_1) \subseteq \Gamma(\H_2)$. Additionally, $\Gamma(\H) \cong \Gamma(\overline{\H})$ where $\overline{\H}$ is the hypergraph formed by taking the complement of every edge. 
Slightly more advanced, in \cite[p. 348]{itskov2020hyperplane} the authors remark that the polar complex of the hypergraph containing all possible edges on $[n]$ is the boundary of the $n$-dimensional cross-polytope (a regular, convex, polytope that exists in $n$-dimensional space \cite{coxeter1973regular}). In particular, $\pol(2^{[2]}) = \pol(\{\emptyset, \{1\}, \{2\}, \{1, 2\}\})$ is the square and $\pol(2^{[3]})$ is the octahedron. It follows then that $\hom{pol}{p}(2^{[n]}) = 1$ if and only if $p=n-1$ or $p=0$, and 0 otherwise.

\begin{example}[Figure \ref{fig:rest_bary_ex} running example]
In Figure \ref{fig:polar_ex}, we show the polar complex for the hypergraph $\H$ in Figure \ref{fig:rest_bary_ex}(a). There are three missing faces from the octahedron: $a\overline{b}c$, $\overline{a}bc$, and $\overline{a}\overline{b}\overline{c}$. The polar complex homology of this example is $\hom{pol}{0}(\H) = \Z/2\Z$, $\hom{pol}{1}(\H) = (\Z/2\Z)^2$, $\hom{pol}{p}(\H) = 0$ for all $p > 1$.
Representatives of the two generators of $\hom{pol}{1}(\H)$ are highlighted in Figure \ref{fig:polar_ex}: 
\[G_1=\bar{a}c+\bar{a}\bar{c}+\bar{b}\bar{c}+
\bar{b}c \textrm{ (pink)}, \qquad G_2=bc+b\bar{c}+\bar{b}\bar{c}+\bar{b}c \textrm{ (teal)}.\]
Visually we see three 1-cycles, but they are homologous to $G_1$, $G_2$, and their sum:
\begin{description}
    \item[$\bar{a}\bar{b}+\bar{a}\bar{c}+\bar{b}\bar{c}$:] Homologous to $G_1$ via adding the boundary of the face $\bar{a}\bar{b}c$
    \item[$ac+a\bar{b}+\bar{b}c$:] Homologous to $G_2$ via adding the boundaries of the faces $a\bar{b}\bar{c}$, $ab\bar{c}$, and $abc$
    \item[$\bar{a}b+\bar{a}c+bc$:] Homologous to $G_1+G_2$ by adding the boundary of the face $\bar{a}b\bar{c}$
\end{description}

\begin{figure}[!ht]
\centering
\includegraphics[width=0.4\textwidth]{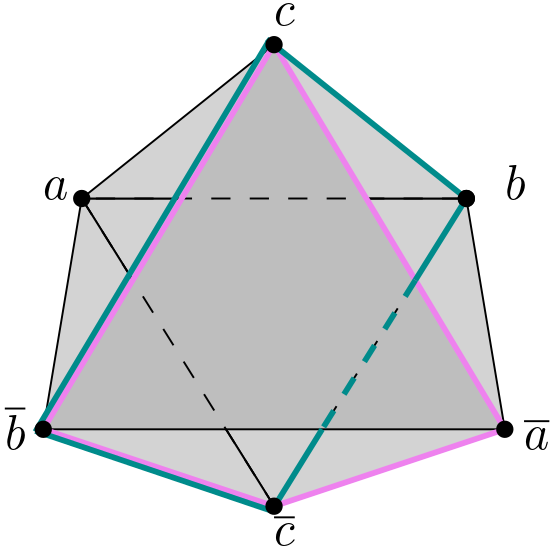}
\caption{The polar complex for the example hypergraph in Figure \ref{fig:rest_bary_ex}(a) with representatives of the two $\hom{pol}{1}(\H)$ generators shown in pink and teal.}
\label{fig:polar_ex}
\end{figure}
\end{example}

To answer the question of whether $\Gamma$ is a functor from $\Hyp$ to $\Simp$ we must first define the map on morphisms. For completeness we show both maps:
\begin{align*}
    \Gamma : \Hyp& \rightarrow \Simp\\
        (V, E)& \mapsto \Delta(\{\Sigma(e) : e \in E\}) \hspace{5em} \textrm{(object map)}\\
        f& \mapsto \hat{f}(v) = \left\{\begin{tabular}{ll}
                      $f(v)$ & $v \in V$\\
                      $\overline{f(\overline{v})}$ & $v \in \overline{V}$
                      \end{tabular}\right. \hspace{1em} \textrm{(morphism map)}
\end{align*}
In this definition, $\overline{\overline{v}} := v$. To make this concrete, if $f(a)=x$, then $\hat{f}(a) = x$ and $\hat{f}(\overline{a}) = \overline{x}$. 
However, this map is not a functor from $\Hyp$ to $\Simp$. A problem arises when $f$ is not one-to-one on $V$. Indeed consider the following simple case of hypergraphs and a morphism between them\footnote{We thank one of our anonymous reviewers for this example and for pointing out an error in an earlier version of Proposition \ref{prop:polar_functor}.}:
\[ \H_1 = (\{a,b\}, \{\{a\}, \{b\}\}), \quad \H_2 = (\{x\}, \{\{x\}\}), \quad f(a)=f(b)=x.\]
This is a valid morphism as it takes both edges of $\H_1$ to the edge $\{x\}$ in $\H_2$. However, the simplex $a\bar{b} \in \Gamma(\H_1)$ (corresponding to edge $\{a\}$) would be mapped to $x\bar{x}$, which cannot be in any polar complex. The ``barred'' vertices must be disjoint from the ``unbarred''. 
This shows concretely that this particular map, $\Gamma$, is not a functor from $\Hyp$ to $\Simp$. 
However, it can be shown to be a functor from a subcategory of $\Hyp$ that restricts to morphisms that are one-to-one. Since one-to-one morphisms may be of particular interest in the case of hypergraph filtrations (likely the most common way to apply persistence) we provide the proof here.

\begin{proposition}\label{prop:polar_functor}
    The map $\pol$ is a functor from a subcategory of $\Hyp$ where morphisms are required to be one-to-one to $\Simp$.
\end{proposition}

%\cEP{Since this proof is an edited version of the (incorrect) one in the submitted paper, restricting to one-to-one morphisms, I need somebody to look it over carefully!!}
\begin{proof}
Let $\H_1=(V_1, E_1)$ and $\H_2=(V_2, E_2)$ be hypergraphs in $\Hyp$ and $f : V_1 \rightarrow V_2$ be a one-to-one morphism in $\Hyp$. To show that $\pol$ is a functor we must show that $\hat{f}$ is a simplicial map from $\Gamma(\H_1)$ to $\Gamma(\H_2)$.

Let $\sigma \in \pol(\H_1)$ be a facet. Then $\sigma$ corresponds to a hyperedge $e \in E_1$ and we can write $\sigma = e \sqcup \overline{V_1\setminus e}$.
Since $f$ is a morphism in $\Hyp$ the image of the vertices in $e$ must be a hyperedge in $\H_2$. Therefore, there is a facet in $\pol(\H_2)$ equal to $\{f(v) : v \in e\} \sqcup \overline{V_2 \setminus \{f(v) : v \in e\}}$. 
The vertices of the simplex $\sigma$ get mapped by $\hat{f}$ to $\{f(v) : v \in e\} \sqcup \{\overline{f(v)} : v \in V_1 \setminus e\}$. 
We need to show that $\{\overline{f(v)} : v \in V_1 \setminus e\} \subseteq  \overline{V_2 \setminus \{f(v) : v \in e\}}$.
But this is true since $f$ is one-to-one. Every vertex $v$ that is not in $e$ must be mapped into $V_2 \setminus \{f(v) : v \in e\}$.
Therefore the image of $\sigma$ under $\hat{f}$ is a subset of a facet in $\pol(\H_2)$ and therefore it is a simplex in $\pol(\H_2)$.
%The first half of this set is equal to $\epsilon_2(f(e))$ by the definition of a hypergraph morphism. The second half of the set may not be equal to $\overline{V_2 \setminus \epsilon_2(f(e))}$ but it will be a subset. This is because it could be that $|V_2| > |V_1|$ and some vertices are not mapped to. Regardless, the image of $\sigma$ under $\hat{g}$ is a subset of a facet in $\pol(\H_2)$ and therefore it is a simplex in $\pol(\H_2)$.

To show that a non-facet $\tau \in \pol(\H_1)$ maps to a simplex is then straightforward. $\tau$ must be a subset of a facet $\sigma$. Since we just showed that a facet maps to a simplex in $\pol(\H_2)$, all of its subsets must map to subsets of that simplex.
So $\hat{f}$ is a simplicial map. 

It is clear that $\Gamma(\textrm{id}_\H) = \textrm{id}_{\Gamma(\H)}$ by definition. And the fact that $\Gamma(f\circ g) = \Gamma(f)\circ\Gamma(g)$ is inherited from composition in $\Hyp$. 
\end{proof}

What we have not yet shown is that there is no morphism map that makes $\Gamma$ a functor from $\Hyp$ to $\Ab$. However, a bit more exploration into simple cases such as the one above, or including one more vertex, $c$, and edge, $\{c\}$, in $\H_1$ one can quickly see that defining a morphism map that works will be difficult or impossible.

\subsection{Embedded homology}
\label{sec:embedded}

The idea of embedded homology, introduced by Bressan et al.~\cite{bressan_embedded_2019}, differs from the closure, barycentric, and polar complex homologies discussed previously in that it is not a simplicial complex that is constructed from the hypergraph but rather a chain complex.
The construction starts with the chain complex from the upper closure and finds a particular sub-chain complex using only elements corresponding to the hyperedges such that the boundary maps are still valid. 
It is the chain complex homology (as described in Section~\ref{sec:chain-homology}) of this ``infimum chain complex'' that is called the \emph{embedded homology} of the hypergraph.
We give the formal details of this definition next.

Let $\Cgroup_p$ denote the $p$-dimensional chain group associated with $\Delta(\H)$. 
That is, each element $c \in \Cgroup_p$ is a linear combination of the $p$-simplices in $\Delta(\H)$ (with field coefficients).
Together with boundary maps $\bdr_p: \Cgroup_p \to \Cgroup_{p-1}$, we have the following chain complex, denoted as $\Cgroup_\bullet$:
\begin{align*}
\cdots \to \Cgroup_p \xrightarrow{\bdr_p} \Cgroup_{p-1} \to \cdots.
\end{align*}
For $p \geq 0$ let $\Dgroup_p$ denote the collection of all linear combinations of $(p+1)$-hyperedges in $Col(\H)$ (with field coefficients).
Then $\Dgroup_p$ is a subgroup of $\Cgroup_p$ for each $p$. 
The sequence of such subgroups is denoted  $\Dgroup_\bullet$. 
We use $Col(\H)$ in creating $\Dgroup_\bullet$ to remove multi-edges since $\Cgroup_\bullet$ is built on $\Delta(\H)$ which removes multi-edges in its creation. Without removing multi-edges we wouldn't be guaranteed to have $\Dgroup_p$ a subgroup of $\Cgroup_p$. As in the polar complex section we can simply assume $\H$ is in $\Hyp$ instead. If $\H$ contains an empty edge we ignore the edge and let $\Dgroup_{-1}$ be the zero vector space, as is typical for the purposes of homology, so that $\Dgroup_{-1} = \Cgroup_{-1}$.

Given a chain complex $\Cgroup_\bullet$ and a sequence of subgroups $\Dgroup_\bullet$, the \emph{infimum chain group} is defined as $\infgroup_p := \infgroup_p(\Dgroup_p, \Cgroup_p) = \Dgroup_p \cap \bdr^{-1}_{p}(\Dgroup_{p-1})$. 
We have the following commutative diagram:
\[ \begin{tikzcd}
   \Cgroup_p \arrow{r}{\bdr_p} & \Cgroup_{p-1} \\%
   \infgroup_p \arrow{r}{\delta_p} \arrow[swap]{u}{j_p} & \infgroup_{p-1} \arrow[swap]{u}{j_{p-1}} 
   \end{tikzcd}
\]
where $j_p$ are chain maps induced by inclusions $\infgroup_p \to \Dgroup_p \to \Cgroup_p$, and
$\delta_p = \bdr_p \mid_{\infgroup_p}$, i.e., $\delta_p$ is the restriction of $\bdr_p$ to $\infgroup_p$.
The chain map $j_p$ sends cycles to cycles, boundaries to boundaries, and thus induces a map on homology between $\Hgroup_p(\Delta(\H)):=\Hgroup_p(C_\bullet, \bdr_\bullet) = \myker \bdr_p / \myim \bdr_{p+1} $ and $\hom{emb}{p}(\H) := \Hgroup_p(\infgroup_\bullet, \delta_\bullet) = \myker \delta_p / \myim \delta_{p+1}$.  
As defined in~\cite[page 485]{bressan_embedded_2019}, $\hom{emb}{p}(\H)$ is the $p$-th \emph{embedded homology} of the hypergraph $\H$.

We refer the reader to \cite{bressan_embedded_2019} for many theoretical results on embedded homology including a Mayer-Vietoris sequence and a persistence formulation. In \cite[Proposition 3.7]{bressan_embedded_2019} Bressan, et al. essentially prove that embedded homology is a functor from $\Hyp$ to $\Ab$. The authors also provide some special cases of $\hom{emb}{\bullet}$ when $\H$ is acyclic (using the notion of acyclicity defined in \cite{berge1984hypergraphs}) and when $\H$ has one hyperedge that all other hyperedges are subsets of (i.e., if $\Delta(\H)$ is a single simplex and all of its subsimplices). 
Bressan et al. also introduced multiple measures for hypernetworks (from which hypergraph models are derived) that rely on embedded homology. These measures are aimed at analyzing the structure and features of hypergraphs derived from real-world data. They enable one to quantify such features as the interconnectedness of the vertices, the degree of differentiation or stratification within the hypernetwork, and the correlation between two functions defined on the vertices.

\begin{example}[Figure \ref{fig:rest_bary_ex} running example]
We return to our running example hypergraph from Figure \ref{fig:rest_bary_ex}(a) to show an example of embedded homology. In this example we have
\begin{center}
\begin{tabular}{lll}
    $\Cgroup_2 = \langle abc \rangle$        & $\Dgroup_2 = \langle abc \rangle$     & $\infgroup_2(\Dgroup_2, \Cgroup_2) = \langle abc \rangle \cap \bdr_2^{-1}(\langle ab \rangle)      = 0$ \\
    $\Cgroup_1 = \langle ab, ac, bc \rangle$ & $\Dgroup_1 = \langle ab \rangle$      & $\infgroup_1(\Dgroup_1, \Cgroup_1) = \langle  ab \rangle \cap \bdr_1^{-1}(\langle a, b, c \rangle) = \langle ab \rangle$ \\
    $\Cgroup_0 = \langle a, b, c \rangle$    & $\Dgroup_0 = \langle a, b, c \rangle$ & $\infgroup_0(\Dgroup_0, \Cgroup_0) = \langle a, b, c \rangle \cap \bdr_0^{-1}(\langle 0\rangle)                   = \langle a, b, c \rangle$
\end{tabular}
\end{center}
Given the infimum complex $\infgroup_2 \xrightarrow{\delta_2} \infgroup_1 \xrightarrow{\delta_1} \infgroup_0$, we see that $\hom{emb}{2}(\H) = \hom{emb}{1}(\H) = 0$ and $\hom{emb}{0}(\H) = (\Z/2\Z)^2$.
\end{example}

\begin{example}\label{ex:emb_intuition}
Another illustrative example to help build some intuition for the embedded homology is shown in Figure \ref{fig:emb_hom_uniform_cycles}.
Both hypergraphs have a cycle of edges (in the hypergraph sense), but it can be shown that $\betti{emb}{1}(\H_1) = 1$ whereas $\betti{emb}{1}(\H_2) = 0$.
To be explicit, for $\H_1$ it's easy to see that $\infgroup_2 = \langle 0 \rangle$ and $\infgroup_0 = \langle 0 \rangle$ (the latter because none of the singleton edges are present in $\H_1$). However, $\infgroup_1$ is more subtle. Since all the edges in the hypergraph cycle are size 2 their alternating sum is in the preimage of $\Dgroup_0=\langle 0 \rangle$.
\begin{align*}
    \Cgroup_1 &= \langle ab, bd, cd, ac \rangle \\
    \Dgroup_1 &= \langle ab, bd, cd, ac \rangle \\
    \Dgroup_0 &= \langle 0 \rangle \\
    \infgroup_1(\Dgroup_1, \Cgroup_1) &= \langle  ab, bd, cd, ac \rangle \cap \bdr_1^{-1}(\langle 0 \rangle) = \langle ab + bd - cd - ac \rangle
\end{align*}
Therefore, $\hom{emb}{1}(\H_1) = \myker \delta_1 / \myim \delta_{2} = \infgroup_1 / \langle 0 \rangle = \Z/2\Z$ and $\betti{emb}{1}(\H_1) = 1$. 

For $\H_2$ the story is different. 
Even though $\Dgroup_2 = \langle aeb \rangle$, we still have $\infgroup_2 = \langle 0 \rangle$ since the three edges of the $aeb$ triangle are not in $\Dgroup_1$. Then,
\begin{align*}
    \Cgroup_1 &= \langle ae, ab, eb, bd, cd, ac \rangle \\
    \Dgroup_1 &= \langle bd, cd, ac \rangle \\
    \Dgroup_0 &= \langle 0 \rangle \\
    \infgroup_1(\Dgroup_1, \Cgroup_1) &= \langle  bd, cd, ac \rangle \cap \bdr_1^{-1}(\langle 0 \rangle) = \langle 0 \rangle.
\end{align*}
In this case $\bdr_1^{-1}(\langle 0 \rangle)$ includes $ac+cd-bd-ab$ and $ac+cd-bd-ae+be$ but neither of these are in $\Dgroup_1$. With $\inf_2 = \inf_1 = \inf_0 = \langle 0 \rangle$ we have $\betti{emb}{1}(\H_2) = 0$.
\end{example}

Example \ref{ex:emb_intuition} boils down to the fact that the hypergraph cycle in $\H_1$ is made up of all edges of size 2 while in $\H_2$ one of the edges is of size 3. We can generalize this intuition by saying that embedded homology in dimension $p$ captures (among other things) uniform $p$-dimensional cycles of edges\footnote{That is, sets of hyperedges of size $p+1$ whose upper closure is a $p$-dimensional homological cycle. An example is the set of edges $abc, abd, acd, bcd$ that form a tetrahedron in the upper closure, even if none of the sub-edges are present in the hypergraph.} (as long as the enclosing $(p+2)$-edge is not present), even if not all (or even none!) of the sub-edges are present. This is in contrast to $\hom{\Delta}{\bullet}$ which would have $\betti{\Delta}{1}=1$ in both of these examples because the simplex $\{a, b\}$ would be present as a sub-edge of $\{a, b, e\}$ in the hypergraph on the right. 

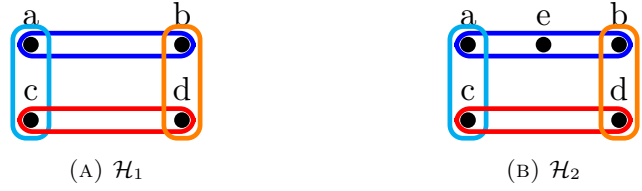
\begin{figure}
\centering
\begin{subfigure}{.45\textwidth}
  \centering
  \resizebox{.5\textwidth}{!}{
  \begin{tikzpicture}[scale=0.8, ve/.style={fill=black,circle,scale=0.5},  he1/.style={draw = blue, line width=0.5mm, rounded corners,inner sep=1pt}]
    %
    %vertices
    \node[ve, label={a}] (v1) at (0, 0) {};
   % \node[ve, label={}] (v2) at (1, 0) {};
    \node[ve, label={b}] (v3) at (2, 0) {};
    \node[ve, label={c}] (v4) at (0, -1) {};
    \node[ve, label={d}] (v6) at (2, -1) {};
    %
    %hyperedges
    \node [he1,  fit=(v1) (v3)] {};
    \node [he1,draw = cyan,inner sep=3pt, fit=(v1) (v4)]{};
 \node [he1,draw = red, fit=(v4) (v6)]{};
 \node [he1,draw = orange,inner sep=3pt, fit=(v3) (v6)]{};
\end{tikzpicture}
    }
  \caption{$\mathcal{H}_1$}
  \label{hyperg1}
\end{subfigure}
\begin{subfigure}{.45\textwidth}
  \centering
  \resizebox{.5\textwidth}{!}{
  \begin{tikzpicture}[scale=0.8, ve/.style={fill=black,circle,scale=0.5},  he1/.style={draw = blue, line width=0.5mm, rounded corners,inner sep=1pt}]
    %
    %vertices
    \node[ve, label={a}] (v1) at (0, 0) {};
    \node[ve, label={e}] (v2) at (1, 0) {};
    \node[ve, label={b}] (v3) at (2, 0) {};
    \node[ve, label={c}] (v4) at (0, -1) {};
    \node[ve, label={d}] (v6) at (2, -1) {};
    %
    %hyperedges
   \node [he1,  fit=(v1) (v2) (v3)] {};
    \node [he1,draw = cyan,inner sep=3pt, fit=(v1) (v4)]{};
 \node [he1,draw = red, fit=(v4) (v6)]{};
 \node [he1,draw = orange,inner sep=3pt, fit=(v3) (v6)]{};
\end{tikzpicture}
    }
  \caption{$\mathcal{H}_2$}
  \label{hyperg2}
\end{subfigure}
    \caption{Two hypergraphs to illustrate the fact that embedded homology captures ``uniform cycles.''}
    \label{fig:emb_hom_uniform_cycles}
\end{figure}

Research in embedded homology is quite active. In particular, previous papers have explored computational aspects of embedded homology \cite{liu2024computing}, developed relative embedded homology \cite{ren2022embedded} and a persistence theory of hypergraph embedded homology \cite{liu2021HPC}, and explored applications in the context of biology \cite{liu2021HPC}, face-to-face interactions \cite{liu2021HPC}, and politics \cite{volic2024hypergraphs}. 

% \assn{Lori}{Some more citations to include in this section:
% \begin{itemize}
%     \item \cite{liu2024computing} - computational aspects of embedded homology
%     \item \cite{gao2023persistent} - experiments with a persistent embedded homology
%     \item \cite{ren2022embedded} - relative embedded homology
%     \item \cite{liu2021HPC} - application of persistent embedded homology to biology
%     \item \cite{volic2024hypergraphs} - application to politics
% \end{itemize}
% }

\subsection{Hypergraph path homology}
\label{sec:path-homology}

The path homology for hypergraphs was introduced by Grigor'yan \etal~\cite{GrigoryanJimenezMuranov2019}. 
It builds upon the notion of path complexes and their homologies~\cite{GrigoryanLinMuranov2020}.
Here we reproduce the necessary definitions from both references to define hypergraph path homology.
A \emph{sequence} on a finite nonempty set $X$ is a string of elements of $X$ in which repetitions are allowed and order matters.  

\begin{definition}[{\cite[page 566]{GrigoryanLinMuranov2020}}]
For any nonnegative integer $p$, an \emph{elementary $p$-path} over $X$ is a sequence of elements in $X$, 
$[x_0, \dots, x_p]$, where $x_0, \dots, x_p$ do not have to be distinct. Brackets are used in the notation for a $p$-path because the sequence will be playing the role of a simplex in a chain complex.
\end{definition}

Let $\Lambda_p = \Lambda_p(X, \Kspace)$ denote the free vector space consisting of all formal linear combinations of elementary $p$-paths over $X$ with coefficients in a field $\Kspace$; for our simple examples, we work with $\Kspace = \Z/2\Z$. 
The elements of $\Lambda_p$ are called \emph{$p$-paths} on $X$.
Define $\Lambda_{-1} := \Kspace$ and $\Lambda_{-2} := \{0\}$.
For any $p$, we define a boundary operator $\bdr_p: \Lambda_{p} \to \Lambda_{p-1}$ that is a linear operator acting on elementary paths, 
\begin{align}
\bdr_p ([x_0, \dots, x_p]) := \sum_{i=0}^{p}(-1)^i[x_0, \dots, \hat{x}_i, \dots, x_p].
\label{eq:bdr}
\end{align}
Again, $\hat{x}_i$ means the omission of $x_i$.
It can be shown that $\bdr_p \circ \bdr_{p+1} = 0$, and $\Lambda_p$ and $\bdr_p$ give rise to a standard chain complex, and an augmented chain complex, respectively:   
\[
\cdots  \to \Lambda_p \to \Lambda_{p-1} \to \cdots \to \Lambda_0 \to 0, 
\]
\[
\cdots  \to \Lambda_p \to \Lambda_{p-1} \to \cdots \to \Lambda_0 \to \Kspace \to 0.  
\]

Regular paths are special types of elementary paths where a certain kind of repetition is not allowed. 
\begin{definition}[{\cite[page 567]{GrigoryanLinMuranov2020}}]
An elementary $p$-path $[x_0, \dots, x_p]$ is \emph{regular} if adjacent elements are distinct, that is, $x_i \neq x_{i+1}$ for $0 \leq i \leq p-1$; otherwise it is \emph{irregular (or nonregular)}. 
\end{definition} 
Let $\Rcal_p:=\Rcal_p(X, \Kspace)$ denote the span of the regular $p$-paths, \ie, the set of all finite linear combinations of regular $p$-paths. 
Let $\Ical_p:=\Ical_p(X, \Kspace)$ denote the span of the irregular $p$-paths. 
It has been shown that $\bdr_\bullet$ defined in \eqref{eq:bdr} is not invariant on the family $\{\Rcal_p\}$, but is invariant on $\{\Ical_p\}$.
It has also been shown that $\Rcal_p \cong \Lambda_p / \Ical_p$ via a natural linear isomorphism; we can define (with an abuse of notation) $\bdr_p: \Rcal_p \to \Rcal_{p-1}$ as the pullback of the boundary map $\bdr_p: \Lambda_p \to \Lambda_{p-1}$ via this isomorphism. 
In simpler terms, the formula for $\bdr_p$ defined on $\Rcal_p$ is the same as above with any irregular terms that appear on the right-hand side treated as zero.

Whereas $\Lambda_p$ (resp. $\Rcal_p$, $\Ical_p$) are generated by all (resp. regular, irregular) elementary $p$-paths, path homology is concerned with special sets of elementary $p$-paths, those belonging to a path complex, which we define next.
The notion of a path complex over a finite nonempty set $X$ generalizes the notions of a simplicial complex and a digraph. 
\begin{definition}
[{\cite[Definition 3.1]{GrigoryanLinMuranov2020}}]
A \emph{path complex} over a set $X$ is a nonempty collection $P:=P(X)$ of elementary paths on $X$ such that if $[x_0, \dots, x_p] \in P$, then $[x_1, \dots, x_p] \in P$ and $[x_0, \dots, x_{p-1}] \in P$. 
\label{def:path-complex} 
\end{definition}
When a path complex $P$ is fixed, all the paths from $P$ are called \emph{allowed elementary paths}. 
Definition~\ref{def:path-complex} means that if we remove the first or the last element of an allowed $p$-path, then the resulting $(p-1)$-path is also allowed.
Denote by $P_p$ the set of all $p$-paths from $P$.
The set $P_{-1}$ contains a single empty path.  
Following  \cite{GrigoryanLinMuranov2020}, the elements of $P_0$ are called the \emph{vertices} of $P$, and $P_0 \subseteq X$. 
For simplicity, we will remove from the set $X$ all nonvertices so that $P_0 = X$. 
A path complex contains elementary paths of varying lengths, $P = \{P_p\}_{p=0}^{\infty}$.  
 
Given a path complex $P$ over a nonempty finite set $X$, we consider the linear space $\Acal_p: = \Acal_p(P, \K)$ that is spanned by all the elementary $p$-paths in $P_p$ (for any integer $p \geq -1$) with coefficients from $\K$. 
The elements of $\Acal_p$ are called \emph{allowed $p$-paths}. 
$\Acal_p$ is a subspace of $\Lambda_p$ by construction. 
We will restrict the operator $\bdr_p$ defined on spaces $\Lambda_p$ to the subspaces $\Acal_p$. 
In general,  $\bdr_p (\Acal_p)$ does not have to be a subspace of $\Acal_{p-1}$.
Therefore, we define a ``well-behaved'' subspace\footnote{Note the similarity of this definition to the infimum complex of embedded homology. Indeed the construction of the chain complex from a sequence of groups is similar though the groups are quite different.}, 
$\Omega_p = \Omega_p(P) = \Acal_p \cap \bdr_p^{-1}(\Acal_{p-1})$.
The elements of $\Omega_p$ are called \emph{$\bdr$-invariant (boundary invariant) $p$-paths}.
We thus obtain a standard chain complex of $\bdr$-invariant paths, and an augmented one, respectively:  
\[
\cdots  \to \Omega_{p} \to \Omega_{p-1} \to \cdots \to \Omega_{0} \to 0,   
\]
\[
\cdots  \to \Omega_{p} \to \Omega_{p-1} \to \cdots \to \Omega_{0} \to \Kspace \to 0.  
\]
The homology groups of the above chain complexes are referred to as the \emph{path homology} and  \emph{reduced path homology} of the path complex $P$, denoted by $\Hgroup_p(P)$ ($p \geq 0$) and  $\tilde{\Hgroup}_p(P)$ ($p \geq -1$) respectively, for $p \geq 0$~\cite{GrigoryanLinMuranov2020}.

\begin{definition}[{\cite[Definition 3.4]{GrigoryanLinMuranov2020}}]
A path complex $P$ is \emph{regular} if all the paths $[x_0, \dots, x_p]$ in $P$ are regular. 
\end{definition}

%If all of the elementary paths in $P$ are regular then $P$ is a \emph{regular path complex}.
In this case $\Acal_p$ is a subspace of $\Rcal_p$, so we can replace the irregular boundary operator $\bdr_p$ (defined on $\Lambda_p$) in $\Omega_p = \Omega_p(P) = \Acal_p \cap \bdr_p^{-1}(\Acal_{p-1})$ by the regular boundary operator $\bdr_p$ (defined on $\Rcal_p$).  
The resulting space $\Omega_p^{reg}$ is referred to as a \emph{regular space of $\bdr$-invariant paths}. 
This gives rise to another standard chain complex, together with an augmented one:
\[
\cdots  \to \Omega_p^{reg} \to \Omega_{p-1}^{reg} \to \cdots \to \Omega_0^{reg} \to 0,   
\] 
\[
\cdots  \to \Omega_p^{reg} \to \Omega_{p-1}^{reg} \to \cdots \to \Omega_0^{reg} \to \Kspace \to 0.  
\]
Homology groups of the above chain complexes are called the \emph{regular path homology} and \emph{reduced regular path homology} of $P$, denoted as $\Hgroup^{reg}_p(P)$ ($p \geq 0$) and $\tilde{\Hgroup}^{reg}_p(P)$ ($p \geq -1$), respectively.

To define the path homology of a hypergraph $\H=(V, E)$, we define a path complex from $\H$ and then study its homology. 
\begin{definition}[{\cite[Definition 5.5]{GrigoryanJimenezMuranov2019}}]
For a hypergraph $\H=(V, E)$, define a path complex of density $q \geq 1$ on the set $V$ of vertices, denoted as $P^q(\H)$, in the following way: 
a path of length $p \geq 0$ and density $q$ is defined as a sequence of $p+1$ vertices such that any $q$ consecutive vertices lie in some hyperedge of $\H$.
\end{definition}

\begin{definition}[{\cite[Definition 5.12]{GrigoryanJimenezMuranov2019}}]
\label{def:hypergraph-path-homology}
For a hypergraph $\H$ and any $q \geq 1$, we have a standard chain complex 
\[
\cdots  \to \Omega_{p}(P^q(\H)) \to \Omega_{p-1}(P^q(\H)) \to \cdots \to \Omega_{0}(P^q(\H)) \to 0.     
\]
The homology groups of the above chain complex are called the \emph{path homology of a hypergraph}, denoted $\Hgroup_p^{path}(\H, q)$. 
The \emph{regular path homology of a hypergraph}, denoted $\Hgroup^{reg}_p(\H, q)$, together with their reduced versions, are defined similarly. 
\end{definition}

A path complex of a hypergraph, $P^q(\H)$, is unaffected by the presence of both multi-edges and empty edges. If a sequence of $p+1$ vertices has all $q$ consecutive vertices in some hyperedge of $\H$ the fact that they may be in multiple copies of the same hyperedge is immaterial. Additionally an empty edge cannot contribute anything to the path complex because no vertices can be in an empty edge together.
Therefore, path homology is defined for any $\H \in \MHyp$, though multi and empty edges are ignored.
We formalize this observation with a proposition that the maximal edges determine the path homology of a hypergraph. This implies that path homology, like closure homology, is constant on hyperblocks. 
\begin{proposition}[{\cite[Proposition 5.6]{GrigoryanJimenezMuranov2019}}]\label{prop:path_constant_hyperblock}
    The path homologies of a hypergraph, $\Hgroup_p^{path}(\H, q)$ and $\Hgroup^{reg}_p(\H, q)$, depend only on the set of its maximal edges.
\end{proposition}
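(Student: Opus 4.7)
The plan is to show directly that the path complex $P^q(\H)$ depends only on the set of maximal edges; since the chain complexes $\Omega_\bullet(P^q(\H))$ and $\Rcal_\bullet(P^q(\H))$ are constructed functorially from $P^q(\H)$, the homologies in question will then automatically inherit this property.

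First, I would introduce some notation. Let $\H = (V,E)$ be a hypergraph, let $\mathrm{top}(\H) \subseteq E$ denote its set of maximal (toplex) hyperedges, and let $\H_{\max} := (V, \mathrm{top}(\H))$. Let $\H'$ be any other hypergraph with $\mathrm{top}(\H') = \mathrm{top}(\H)$. The heart of the argument is the following elementary observation: for any finite subset $S \subseteq V$, there exists $e \in E$ with $S \subseteq e$ if and only if there exists $e^* \in \mathrm{top}(\H)$ with $S \subseteq e^*$. The ``only if'' direction holds because every hyperedge $e$ of $\H$ is contained in at least one toplex $e^* \supseteq e$, so $S \subseteq e \subseteq e^*$. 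The ``if'' direction is immediate since $\mathrm{top}(\H) \subseteq E$.

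Next I would apply this observation to the defining condition for $P^q(\H)$. A sequence $[x_0, \dots, x_p]$ is a path of density $q$ in $\H$ precisely when, for each $0 \le i \le p - q + 1$, the $q$-element set $\{x_i, x_{i+1}, \dots, x_{i+q-1}\}$ is contained in some hyperedge of $\H$. By the observation above, this condition is equivalent to the same set being contained in some maximal hyperedge of $\H$, i.e.\ some hyperedge of $\H_{\max}$. Hence $P^q(\H) = P^q(\H_{\max})$ as path complexes, and likewise $P^q(\H') = P^q(\H_{\max})$, so $P^q(\H) = P^q(\H')$.

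Finally, since $P^q(\H)$ determines the allowed $p$-paths $\Acal_p$, the $\partial$-invariant subspaces $\Omega_p = \Acal_p \cap \bdr^{-1}(\Acal_{p-1})$, and the regular subspaces $\Rcal_p$, together with the boundary map $\bdr$ restricted to each of these, equality of the path complexes forces equality of the entire standard (and augmented) chain complexes. Taking homology then gives $\Hgroup_p(\H, q) = \Hgroup_p(\H', q)$ and $\Hgroup^{reg}_p(\H, q) = \Hgroup^{reg}_p(\H', q)$ for all $p$, as claimed. I do not anticipate any real obstacle here; the only point requiring a moment of care is the bookkeeping to confirm that the density-$q$ condition is quantified only over the $q$-element consecutive subsequences (so that the set-containment observation applies verbatim), and that the regular/reduced variants are built from $P^q(\H)$ by the same recipe and hence inherit the equality automatically.
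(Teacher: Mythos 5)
Your argument is correct and is essentially the standard proof: the density-$q$ condition is phrased as ``contained in some hyperedge,'' which (for a finite hypergraph, where every edge lies in a maximal one) is equivalent to ``contained in some maximal hyperedge,'' so $P^q(\H)$ and hence all the derived chain complexes and homologies depend only on $\mathrm{top}(\H)$. The survey itself defers the proof to \cite{GrigoryanJimenezMuranov2019}, but your reasoning matches the argument given there, so there is nothing to flag.
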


We continue by computing the path homology of our running example.

\begin{example}[Figure \ref{fig:rest_bary_ex} running example]
We return again to the hypergraph in Figure~\ref{fig:rest_bary_ex}(a) and show its regular path homology for density $q=2$. First we observe that every pair of vertices is in an edge together because edge $F$ contains all vertices. Therefore, the hypergraph regular path complex $P^2_{reg}(\H)$ consists of all regular paths on $\{a, b, c\}$:
\begin{align*}
    P^2_{reg}(\H) : \quad p &= 0 \quad \{a, b, c\}\\
                    p &= 1 \quad \{ab, ac, ba, bc, ca, cb\}\\
                    p &= 2 \quad \{aba, abc, bab, bac, aca, acb,\\
                    \phantom{p}&\phantom{= 2 \quad \{,,}  bca, bcb, cab, cac, cba, cbc\}
\end{align*}  
Then it's not difficult to show that $\Omega_\bullet$ is generated by all of $P^2_{reg}(\H)$ at each length $p$. For $p=0$ and $p=1$ it's immediate, and for $p=2$ it's easy to show that the regular boundary map applied to an arbitrary linear combination of all elementary 2-paths (those in $P^2_{reg}(\H)$ for p=2) is a linear combination of elementary 1-paths (those in $P^2_{reg}(\H)$ for p=1) so the preimage of the group generated by all the elementary 1-paths is generated by all of the elementary 2-paths.
From here it's a linear algebra exercise to show that $H_0^{reg}(\H, 2) = \Z/2\Z$ and $H_1^{reg}(\H, 2) = 0$.
\end{example}

We close this subsection by citing some results on functoriality of path homology from \cite{GrigoryanJimenezMuranov2019}. In particular Grigor'yan, et al. show that $P^q$ is a functor from a category of hypergraphs nearly equivalent to $\MHyp$\footnote{They use the category from \cite{dorfler1980category} which disallows empty edges.} to a category of path complexes (Proposition 5.7). They also show that $\Omega_\bullet$ and $\Omega^{reg}_\bullet$ are functors from a category of path complexes to the category of chain complexes (Corollary 2.5), and that $\Omega^{reg}_\bullet$ is additionally a functor from a category of path complexes with ``weak morphisms'' (which they define) to the category of chain complexes (Corollary 2.10). Therefore, the transformation from a hypergraph to the regular and irregular chain complexes of a hypergraph path complex is a functor from the category of hypergraphs to the category of chain complexes (Corollary 5.9).

\subsection{Weighted nerve and persistence} 
\label{sec:weighted}

Thus far in this survey, we have presented hypergraph homology theories that consist of one computation of homology on a single topological object: a simplicial complex, chain complex, or relative chain complex. But a commonly used tool in TDA is \emph{persistent homology} (PH), which considers a filtration or a nested sequence of objects, e.g., simplicial complexes, with inclusion maps from one to the next. If one computes homology of each of the objects in the sequence, the inclusion maps induce maps on the homology groups (since homology is a functor). This allows one to track the appearance and disappearance of topological features along indices of the filtration. We refer the reader to some standard introductions to persistent homology for details \cite{Carlsson2009TopologyAD,dey2022computational, edelsbrunner2010computational, EdelsbrunnerLetscherZomorodian2002,ghrist2014elementary, zomorodian2005computing}. 

The last hypergraph homology theory that we present in detail in this survey is the persistent homology of a filtration of a weighted simplicial complex. This weighted simplicial complex has the property that the hypergraph can be fully (and uniquely up to isomorphism) reconstructed from it.
We begin by defining the weighted nerve of a hypergraph and then prove the reconstruction property.
\begin{definition}\label{def:wtd_nrv}
The \emph{weighted nerve complex}, or simply \emph{weighted nerve}, of a hypergraph $\H$, denoted $\Nrv_w(\H)$, is the nerve of $\H$ (recall from Definition~\ref{def:nerve}) weighted by $w(\sigma) = |\cap_{e \in \sigma} e|$.
\end{definition}

If multi-edges are present in $\H$ then they will be represented by distinct vertices in $\Nrv_w(\H)$, therefore $\Nrv_w(\H)$ applies to $\MHyp$ without collapsing multi-edges.
Empty edges are also allowed and will be represented as isolated vertices with weight 0 in $\Nrv_w(\H)$.
However, if isolated vertices are present they are not captured in $\Nrv_w(\H)$. 
An example weighted nerve for the hypergraph in Figure \ref{fig:wtd_nerve_ex}(a) is shown in Figure \ref{fig:wtd_nerve_ex}(b) and will be discussed in detail in Example \ref{ex:wtd_nerve_ex}. This is very close to the running example in Figure \ref{fig:rest_bary_ex}(a) now with multi-edges ($C_1$ and $C_2$) and duplicate vertices ($c_1$ and $c_2$).

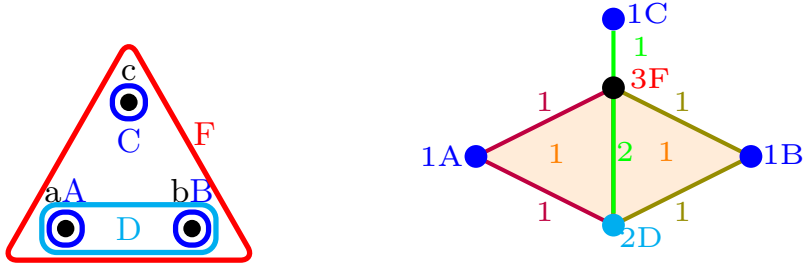
\begin{figure}[h!]
  \centering
  \begin{subfigure}{.35\textwidth}
  \centering
  \resizebox{0.9\textwidth}{!}{
  \begin{tikzpicture}[scale=0.6, ve/.style={fill=black,circle,scale=0.5},  he1/.style={draw = blue, line width=0.5mm, rounded corners,inner sep=2pt}]
    \node[ve] (v1) at (0.5, 0) {};
    \node[ve] (v4) at (1.5, 0) {};
    \node[color=black] at (0.5, 0.7) {\small{$c_1$}};
    \node[color=black] at (1.5, 0.7) {\small{$c_2$}};
    \node[color=orange] at (0.5, -0.8) {\small{$C_1$}};
    \node[color=purple] at (1.5, -0.8) {\small{$C_2$}};
    \node[ve, label={}] (v2) at (0, -2) {};
    \node[ve, label={}] (v3) at (2, -2) {};
      \node at ([shift={(90:0.6)}]v2){\small{a\textcolor{blue}{A}}};
    \node at ([shift={(90:0.6)}]v3){\small{b\textcolor{blue}{B}}};
    \node [he1, draw=orange, fit=(v1) (v4)] {};
    \node [he1, draw=purple, inner sep=5pt, fit=(v1) (v4)] {};
    \node [he1,  fit=(v2)] {};
    \node [he1,  fit=(v3)] {};
 \node [he1, draw=cyan,inner sep=4pt, fit=(v2) (v3)]{};
   \node[color=cyan] at (1,-2){\small{D}};
 \draw [rounded corners,line width=0.5mm, draw=red, scale=1] (2,1)--(0,1)--(-1,-2.6)--(3,-2.6)--cycle;
   \node[color=red] at (2.7,-0.5){\small{F}};
\end{tikzpicture}
    }
    \caption{}
  \label{fig:6g1}
\end{subfigure}
 \begin{subfigure}{.6\textwidth}
  \centering  \resizebox{0.7\textwidth}{!}{
  \begin{tikzpicture}[ve/.style={fill=black,circle,scale=0.5},   he1/.style={draw = blue, line width=0.5mm, rounded corners=5pt,inner sep=3pt}
];
      \draw[line width=0.3mm, color=olive,fill=yellow!15] (0, -1)--(1, -1.5)--(1, -0.5)--cycle;% AFD triangle
      \node[color=olive]  at (0.5,-0.6) {\tiny{1}};% AF label
      \node[color=olive]  at (0.5,-1.4) {\tiny{1}};% AD label
      \draw[line width=0.3mm,color=olive,fill=yellow!15] (2, -1)--(1, -1.5)--(1, -0.5)--cycle; %BFD triangle
      \draw[line width=0.3mm,color=olive,fill=yellow!15] (0.5, 0)--(1.5, 0)--(1, -0.5)--cycle; %C1C2F triangle
        \node[color=olive]  at (1.5,-0.6) {\tiny{1}};% BF label
       \node[color=olive]  at (1.5,-1.4) {\tiny{1}};% BD label
    \node[ve, color=orange] (v3) at (0.5, 0) {};% C1 node
    \node[ve, color=purple] (v4) at (1.5, 0) {};% C2 node
    \node[ve, color=blue] (v1) at (0, -1) {};% A node
   \node[ve, color=blue] (v2) at (2, -1) {};% B node
   \node[color=blue]  at (-0.25,-1){\tiny{1A}}; % A label
  \node[color=blue]  at ([shift={(1:0.15)}]v2.10){\tiny{1B}}; % B label
    \node[color=orange] at (0.5,0.2) {\tiny{2$C_1$}};% C1 label
    \node[color=purple] at (1.5,0.2) {\tiny{2$C_2$}};% C2 label
    \node[ve, color=cyan] (v12) at (1, -1.5) {};% D node
     \node[color=cyan, below] at (1.1, -1.5){\tiny{2D}}; % D label
   \node[ve, color=red] (v123) at (1, -0.5) {}; % F node
 %\draw[line width=0.3mm, color=green] (v3)--(v123); % CF edge
 \draw[line width=0.3mm, color=olive] (v12)--(v123); % FD edge
\node[color=red]  at ([shift={(18:0.19)}]v123.10){\tiny{4F}}; % F label
\node[color=violet]  at ([shift={(1:0.5)}]v1.10){\tiny{1}}; % AFD label
\node[color=violet]  at ([shift={(1:1.3)}]v1.10){\tiny{1}}; % BFD label
\node[color=olive]  at ([shift={(1:1)}]v1.10){\tiny{2}}; %DF label
\node[color=olive]  at (1.4,-0.25) {\tiny{2}};% C2F label
\node[color=olive]  at (0.6,-0.25) {\tiny{2}};% C1F label
\node[color=olive]  at (1,0.1) {\tiny{2}};% C1C2 label
\node[color=violet]  at (1,-0.25) {\tiny{2}};% C1C2F label
\end{tikzpicture}}
  \caption{}
  \label{fig:6nerve}
\end{subfigure}
 \caption{The weighted nerve (b) of the hypergraph in (a). In (b) vertices are labeled by their weight followed by the name of the hyperedge they represent.}
    \label{fig:wtd_nerve_ex}
\end{figure}

Before we define a filtration of the weighted nerve to apply persistence, we prove that if two hypergraphs (without isolated vertices) have the same weighted nerve, they must be isomorphic. 

\begin{proposition}\label{prop:unique_wtd_nerve}
    If $\H_1$ and $\H_2$ are hypergraphs with no isolated vertices such that $\Nrv_w(\H_1) \cong \Nrv_w(\H_2)$, then $\H_1 \cong \H_2$.
\end{proposition}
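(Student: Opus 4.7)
The strategy is to show that $\Nrv_w(\H)$ determines $\H$ up to relabeling of its vertex set, via a Möbius inversion argument on the Boolean lattice of subsets of $E$. First, observe that the vertex set of $\Nrv_w(\H)$ is canonically the hyperedge set $E$ of $\H$, and that the weight of a $0$-simplex $\{e\}$ is $|e|$. So the equality $\Nrv_w(\H_1) = \Nrv_w(\H_2)$ canonically identifies $E_1$ with $E_2$, preserves which hyperedges share a vertex, and, for every nonempty $\sigma \subseteq E_1 = E_2$, gives $|\bigcap_{e \in \sigma} e|$ as computed in $\H_1$ equal to that in $\H_2$ (with the convention that $\sigma \notin \Nrv_w(\H)$ records $|\bigcap_{e \in \sigma} e| = 0$).

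Next, for each vertex $v$ of $\H$ I would define its \emph{type} $P(v) := \{e \in E : v \in e\}$, and for each nonempty $\sigma \subseteq E$ let $N(\sigma) := |\{v \in V : P(v) = \sigma\}|$. The key identity is
\[
w(\sigma) \;=\; \Big|\bigcap_{e \in \sigma} e\Big| \;=\; \sum_{\tau \supseteq \sigma} N(\tau),
\]
since a vertex lies in every hyperedge of $\sigma$ iff its type contains $\sigma$. Möbius inversion on the Boolean lattice of subsets of $E$ then yields
\[
N(\sigma) \;=\; \sum_{\tau \supseteq \sigma} (-1)^{|\tau \setminus \sigma|}\, w(\tau),
\]
with only finitely many nonzero terms. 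Thus the multiset $\{N(\sigma)\}_{\emptyset \neq \sigma \subseteq E}$ is recovered purely from the weighted nerve data.

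Finally I would assemble the isomorphism. From the collection $\{N(\sigma)\}$ one reconstructs a canonical hypergraph by taking, for each nonempty $\sigma \subseteq E$, a set $V_\sigma$ of $N(\sigma)$ fresh vertices and declaring a vertex in $V_\sigma$ to lie in precisely the hyperedges belonging to $\sigma$. Applied to $\H_1$ and to $\H_2$, this reconstruction uses the \emph{same} counts $N(\sigma)$ (by the previous step) and the \emph{same} hyperedge set (by the canonical identification), so it produces identical hypergraphs. Since in each case the reconstruction agrees with the original $\H_i$ up to a bijection on vertices within each type class, we obtain $\H_1 \cong \H_2$.

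The main subtlety is the treatment of isolated vertices (those with $P(v) = \emptyset$), which contribute to no weight and are therefore invisible to $\Nrv_w(\H)$. Unless the definition tacitly forbids them, the argument only delivers $\H_1 \cong \H_2$ modulo isolated vertices; I would flag this and either add a no-isolated-vertex hypothesis or adopt that convention. Note that no separate nonnegativity check for the inverted counts is needed: they are automatically nonnegative integers because the weights came from a genuine hypergraph to begin with.
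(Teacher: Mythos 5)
Your proof is correct and follows essentially the same route as the paper: both define the count of vertices whose exact edge-membership set equals $\sigma$ (your $N(\sigma)$ is the paper's $f(\sigma)$), recover it from the weights via M\"{o}bius inversion on the containment order, and reconstruct the hypergraph by allotting that many fresh vertices to each $\sigma$. Your caveat about isolated vertices being invisible to the weighted nerve is a fair point that the paper's proof glosses over, but it does not change the approach.
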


%\cEP{I did some reorganization of the proofs in this section and really need more eyes on them. It's still complicated (though improved from before) and fresh eyes would help confidence that it's correct!}

The proof uses a M\"{o}bius inversion argument and is constructive, which implies that we can uniquely reconstruct a hypergraph from a weighted nerve up to relabeling of the vertices.
The M\"obius inversion theorem states that, given a weighted poset such that $w(x) = \sum_{y\leq x} f(y)$, the function $f$ is uniquely defined in terms of $w$ and the M\"obius function on the poset. See \cite[Sec. 3.7]{stanleyEC1} for more details. In our setting, we define a poset on the simplices of $\Nrv(\H)$ for a hypergraph $\H=(V, E)$ such that $\tau \leq \sigma$ if and only if $\tau \supseteq \sigma$. Note that this means $\cap_{e \in \tau} e \subseteq \cap_{e \in \sigma} e$. We first prove a helpful lemma and then return to the proof of Proposition~\ref{prop:unique_wtd_nerve}.

\begin{lemma}\label{lem:unique_f}
    Let $\H=(V, E)$ without isolated vertices and $\Nrv_w(\H)$ as defined above, then there is a unique function $f : \Nrv(\H) \rightarrow \R$ such that $w(\sigma) = \sum_{\tau \leq \sigma} f(\tau)$.
\end{lemma}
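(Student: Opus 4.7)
The plan is to prove the lemma by a standard M\"obius inversion argument on the finite poset $(\Nrv(\H),\le)$, where $\tau \le \sigma$ iff $\tau \supseteq \sigma$ as subsets of $E$. Since $\H$ is finite, $\Nrv(\H)$ is a finite poset, so its M\"obius function $\mu$ is well-defined. M\"obius inversion on a finite poset states that the linear map sending $f : \Nrv(\H) \to \R$ to the function $\sigma \mapsto \sum_{\tau \le \sigma} f(\tau)$ is a bijection, with inverse given by $f(\sigma) = \sum_{\tau \le \sigma} \mu(\tau,\sigma)\, w(\tau)$. Applied to our $w$, this yields the unique $f$ satisfying the claimed identity.

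If one prefers a self-contained argument that does not invoke the general M\"obius inversion formula, the same conclusion follows by induction on the number of simplices $\tau \in \Nrv(\H)$ with $\tau \supseteq \sigma$. The base case is when $\sigma$ is a maximal simplex of $\Nrv(\H)$, equivalently a minimal element of the poset. Then the only $\tau \le \sigma$ is $\sigma$ itself, so the equation $w(\sigma) = \sum_{\tau \le \sigma} f(\tau)$ forces $f(\sigma) = w(\sigma)$. For the inductive step, if $\sigma$ is not maximal and $f(\tau)$ has already been uniquely determined for every $\tau$ with $\tau \supsetneq \sigma$, then the defining equation rearranges to
\[
f(\sigma) \;=\; w(\sigma) \;-\; \sum_{\tau \supsetneq \sigma} f(\tau),
\]
which pins down $f(\sigma)$ and shows existence simultaneously. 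Because the poset is finite, this top-down recursion terminates and defines $f$ on all of $\Nrv(\H)$.

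I do not expect any real obstacle here, since both approaches reduce to the fact that a strictly triangular linear system over a finite index set has a unique solution. The only points worth checking carefully in the full proof are that the chosen poset order is the correct one (so that the sum $\sum_{\tau \le \sigma} f(\tau)$ really ranges over all simplices $\tau$ containing $\sigma$ in $\Nrv(\H)$), and that finiteness of $\Nrv(\H)$ is what makes both the M\"obius inversion and the downward induction legitimate.
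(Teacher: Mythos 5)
Your proof is correct, and for the lemma as literally stated it is arguably the cleanest possible argument: the map $f \mapsto \bigl(\sigma \mapsto \sum_{\tau \leq \sigma} f(\tau)\bigr)$ is an invertible (triangular) linear transformation on the finite poset $(\Nrv(\H), \leq)$, so existence and uniqueness both follow at once, whether you phrase this via the M\"obius function or via your downward recursion $f(\sigma) = w(\sigma) - \sum_{\tau \supsetneq \sigma} f(\tau)$. The paper takes a different, more labor-intensive route: it invokes M\"obius inversion only for \emph{uniqueness}, and then establishes \emph{existence} by exhibiting a specific candidate $f(\sigma) = \sum_{v \in \cap_{e \in \sigma} e} \mathbbm{1}_{\{\sigma(v) = \sigma\}}$ (the number of vertices lying in exactly the edges of $\sigma$) and verifying directly that it satisfies $w(\sigma) = \sum_{\tau \leq \sigma} f(\tau)$ via a disjoint-union decomposition of $\cap_{e\in\sigma} e$. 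The reason for the extra work becomes clear in the proof of Proposition~\ref{prop:unique_wtd_nerve}: the reconstruction of the hypergraph from its weighted nerve hinges on the combinatorial \emph{interpretation} of $f(\sigma)$ as a count of vertices with prescribed edge membership, not merely on its existence. Your argument buys brevity and generality (it works for any $w$, with no reference to the hypergraph), but if you were to continue on to the proposition you would still need to identify the abstract $f$ produced by inversion with the vertex-counting function, which is essentially the computation the paper's proof performs. So the two proofs are complementary: yours is the minimal proof of the lemma, the paper's front-loads the identification needed downstream.
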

\begin{proof}
    By the M\"obius inversion theorem, if such an $f$ exists, then it must be unique. Therefore, it is enough to construct an $f$ that satisfies the desired equality. For every $v \in V$, let $\sigma(v) := \{e \in E : v \in e\}$, that is, the set of all edges that $v$ is contained in. Note that $\sigma(v) \in \Nrv(\H)$ because the intersection of the edges in this set must contain at least $v$. And $\sigma(v)$ will never be empty because $\H$ has no isolated vertices. 
    
    Define a function $f$ as follows:
    \[ f(\sigma) := \sum_{v \in \cap_{e \in \sigma} e} \mathbbm{1}_{\{ \sigma(v) = \sigma \}}. \]
    In words, $f(\sigma)$ is the number of vertices $v \in \cap_{e \in \sigma} e$ such that $v$ is in exactly the set of edges defined by $\sigma$ ($v$ is in no additional edges).
    To simplify this definition, we remark that if $v$ is not in $\cap_{e \in \sigma} e$ then it is definitely not the case that $\sigma(v)=\sigma$ (there is at least one edge in $\sigma$ that $v$ is not in). Therefore, we can write
    \[ f(\sigma) := \sum_{v \in V} \mathbbm{1}_{\{ \sigma(v) = \sigma \}}. \]
    
    Consider the sum of $f$ over all simplices below and including $\sigma$:
    \begin{align*}
        \sum_{\tau \leq \sigma} f(\sigma) &= \sum_{\tau \leq \sigma} \sum_{v \in V} \mathbbm{1}_{\{\sigma(v) = \tau\}}\\
        &= \sum_{\tau \leq \sigma} \left| \left\{ v \in V: \sigma(v) = \tau \right\} \right|\\
        &= \left| \bigcup_{\tau \leq \sigma} \left\{ v \in V : \sigma(v) = \tau \right\} \right|.
    \end{align*}
    The final step in the chain of equalities above is true because the sets in the sum are disjoint for all $\tau$. Indeed, each $v$ has a unique $\sigma(v)$ and so is contained in only one such set. To show that $w(\sigma) := |\cap_{e \in \sigma} e| = \sum_{\tau \leq \sigma} f(\sigma)$ we need only show that 
    \[\bigcap_{e \in \sigma} e = \bigcup_{\tau \leq \sigma} \left\{ v \in V : \sigma(v) = \tau \right\}.\]

    For the forward containment, let $v \in \cap_{e \in \sigma} e$. Then $\sigma(v) \leq \sigma$ so there exists some $\tau \leq \sigma$ such that $v$ is an element of the set on the right.  On the other hand, let $v \in \cup_{\tau \leq \sigma} \left\{ v \in V : \sigma(v) = \tau \right\}$. Then there is $\tau \leq \sigma$ such that $\sigma(v) = \tau$, that is, $v \in \cap_{e \in \tau} e$. Since $\tau \leq \sigma$, $\sigma$ has fewer edges and so it must be that $v \in \cap_{e \in \sigma} e$.
\end{proof}

In the preceding proof, we defined $f(\sigma)$ to be the \emph{number} of vertices in $V$ that are in exactly the set of edges defined by $\sigma$. Of course, we can also define the \emph{set} $F(\sigma) := \{v \in V : \sigma(v) = \sigma\}$ for each $\sigma \subseteq E$ excluding $\emptyset$ (any $\tau \not\in \Nrv(\H)$ has empty intersection and thus $f(\tau)=0$ and $F(\tau)=\emptyset$). 
Since each vertex, $v$, is only in $F(\sigma(v))$, all of the $F(\sigma)$ sets are disjoint.
This is true even if there are multi-edges or ``duplicate'' vertices (sets of vertices in the exact same set of edges). 

Thus far we have constructed $\sigma(v)$, $f$, and then $F$ starting from a hypergraph, $\H$. 
But notice that if we just had a map $\sigma \mapsto F(\sigma)$ such that all of the $F(\sigma)$ are disjoint then this uniquely determines the incidence structure of a hypergraph that would give rise to the same $F$. Indeed, the $\sigma$ for which $v \in F(\sigma)$ is $\sigma(v)$, which tells us exactly the set of hyperedges that $v$ is in.
We will now use Lemma~\ref{lem:unique_f} and this discussion to prove Proposition \ref{prop:unique_wtd_nerve}.

\begin{proof}[Proof of Proposition \ref{prop:unique_wtd_nerve}]
    %We have two hypergraphs, $\H_1$ and $\H_2$ such that $\Nrv_w(\H_1) \cong \Nrv_w(\H_2)$.
    For simplicity, since the two nerves are isomorphic, WLOG we relabel the vertices of $\Nrv_w(\H_2)$ using the isomorphism so that $\Nrv_w(\H_1) = \Nrv_w(\H_2) =:\Nrv_w$. This implies a relabeling of the edges of $\H_2$.
    %We proceed by contradiction, assuming $\H_1 \not\cong \H_2$. 
    
    Using the construction in the proof of Lemma \ref{lem:unique_f} we can define an $f_1$ from $\H_1$ and an $f_2$ from $\H_2$ so that for all $\sigma \in \Nrv_w$,
    \[ \sum_{\tau \leq \sigma} f_1(\tau) = w(\sigma) = \sum_{\tau \leq \sigma} f_2(\tau).\]
    By Lemma \ref{lem:unique_f} it must be that $f_1 = f_2 =: f$. 
    Since these $f$ functions are equal, and because of how they are defined, it must be the case that for each intersection of hyperedges (i.e., each simplex, $\sigma$) the number of vertices that are in exactly that set of hyperedges ($f(\sigma)$) is the same for both $\H_1$ and $\H_2$. 
    What is left to show is that given such an $f$ on $\Nrv_w$, there is a unique hypergraph (up to relabeling of vertices) that can give rise to it.
    By the discussion above, between the proof of Lemma \ref{lem:unique_f} and this proof, we merely need to construct a map $F : \Nrv_w \rightarrow \po(V)$ such that $|F(\sigma)| = f(\sigma)$ and all of the $F(\sigma)$ are disjoint. This uniquely defines a hypergraph that would have this $f$ function.

    %In Lemma~\ref{lem:F_incidence} we constructed $F(\sigma)$ for all $\sigma \subseteq E$ using the hypergraph structure and showed that there is only one incidence matrix (up to permutation of rows and columns) consistent with these sets, namely the incidence matrix of the hypergraph we started with.
    %In this proof, since we have two hypergraphs that we assumed are not isomorphic, we will rely only on the values $f(\sigma)$ for all $\sigma \subseteq E$ to construct $F(\sigma)$ sets up to permutation of the vertices. 
    %We know that the $F(\sigma)$ sets must be all disjoint and that $f(\sigma)=|F(\sigma)|$ so we can construct sets $F(\sigma)$ uniquely up to relabeling of the vertices.
    The total number of vertices in $V$ is equal to $n := \sum_{\sigma \in Nrv_w} f(\sigma)$ since each $f(\sigma)$ counts a set of vertices that are all disjoint, and since we are not allowing isolated vertices (which would not be counted in any nonempty $f(\sigma)$).
    WLOG, let $V := \{v_1, \ldots, v_n\}$. 
    Consider the set $\{\sigma \in \Nrv_w : f(\sigma) > 0\}$ and arbitrarily order those simplices $\sigma_0, \ldots, \sigma_N$.
    For $0\leq j \leq N$ define $s_j := \sum_{k < j} f(\sigma_k)$ and $s_{N+1} := \sum_{k=0}^N f(\sigma_k) = n$.
    Then assign vertices $v_{s_j+1}, \ldots, v_{s_{j+1}}$ to $F(\sigma_j)$. This assigns $s_{j+1} - (s_{j}+1)+1 = s_{j+1}-s_j = f(\sigma_j)$ unique vertices to $F(\sigma_j)$ as required. For any $\sigma \in \Nrv_w$ with $f(\sigma)=0$ we define $F(\sigma)=\emptyset$. 
    Since we have created disjoint $F(\sigma)$ sets from the $f(\sigma)$ values up to relabeling of vertices the discussion before this proof implies that we have uniquely determined the structure of a hypergraph.
    
    In summary, since $\H_1$ and $\H_2$ gave rise to the same weighted nerve they must give rise to the same $f$ function which uniquely defines a hypergraph up to relabeling of vertices. Therefore $\H_1 \cong \H_2$ as desired.
\end{proof}

%\cEP{Should we put a brief example showing this construction/correspondence? I have it done for the running example hypergraph with a little extra complication to show that duplicate vertices and multi-edges don't mess things up.}

\begin{example}\label{ex:wtd_nerve_ex}
In this example we will walk through the construction of $f$ and $F$, as well as taking $F$ to the hypergraph structure via the procedure in the proof of Proposition \ref{prop:unique_wtd_nerve}, for the hypergraph in Figure \ref{fig:wtd_nerve_ex}(a) and its weighted nerve in \ref{fig:wtd_nerve_ex}(b). First we give $\sigma(v)$ for all $v \in V$ and then the $f(\sigma)$ values and $F(\sigma)$ sets.
\begin{align*}
    \sigma(a) &= \{A, D, F\}\\
    \sigma(b) &= \{B, D, F\}\\
    \sigma(c_1) &= \{C_1, C_2, F\}\\
    \sigma(c_2) &= \{C_1, C_2, F\}
\end{align*}
We will not show the $f(\sigma)$ values (resp. $F(\sigma)$ sets) for all $\sigma$ as most of them are 0 (resp. $\emptyset$), but we will show all the simplicies with nonzero $f$ values and a couple of the ones with 0 values to understand the procedure. We will use the sum over $v \in \cap_{e \in \sigma} e$, rather than all $v\in V$, to simplify the computation.
\vspace{1em}
\begin{center}
\begin{tabular}{ccccc}
$\sigma$ & $\cap_{e \in \sigma} e$ & Sum pieces & $f(\sigma)$ & $F(\sigma)$\\\hline\hline
$ADF$ & $a$    & $\sigma(a) \stackrel{?}{=} ADF$ Yes & 1 & $\{a\}$\\\hline
$DF$  & $a, b$ & $\sigma(a) \stackrel{?}{=} DF$ No & 0 & $\emptyset$\\
      &        & $\sigma(b) \stackrel{?}{=} DF$ No &   & \\\hline
$BDF$ & $b$    & $\sigma(b) \stackrel{?}{=} BDF$ Yes & 1 & $\{b\}$\\\hline
$C_1C_2F$ & $c_1, c_2$ & $\sigma(c_1) \stackrel{?}{=} C_1C_2F$ Yes & 2 & $\{c_1, c_2\}$\\
          &            & $\sigma(c_2) \stackrel{?}{=} C_1C_2F$ Yes &  &  \\\hline
$C_1F$ & $c_1, c_2$ & $\sigma(c_1) \stackrel{?}{=} C_1F$ No & 0 & $\emptyset$\\
          &         & $\sigma(c_2) \stackrel{?}{=} C_1F$ No &   & \\\hline
\end{tabular}
\end{center}
\vspace{1em}

We can verify that $w(\sigma) = \sum_{\tau \leq \sigma} f(\tau)$ (again, we will not show this for all simplices):
\begin{center}
\begin{tabular}{cccc}
$\sigma$ & $\tau \supseteq \sigma$ & $\sum f(\tau)$ & $w(\sigma)$\\\hline\hline
$ADF$    & $ADF$                   & 1              & 1 \\\hline
$DF$     & $ADF, BDF, DF$          & 1+1+0          & 2 \\\hline
$D$      & $ADF, BDF, AD, BD, FD, D$ & 1+1+0+0+0+0  & 2 \\\hline
$F$      & $ADF, BDF, AF, DF, BF, $  & 1+1+0+0+0+   & 4 \\
         & $C_1F, C_2F, C_1C_2F, F$  & 0+0+2+0      &   \\\hline
\end{tabular}
\end{center}
\vspace{1em}

Lastly, we will go through the procedure of creating $F(\sigma)$ sets from the $f(\sigma)$ values as in the proof of Proposition \ref{prop:unique_wtd_nerve}. The sum of all $f(\sigma)$ values is 4 so we let $V:=\{v_1, v_2, v_3, v_4\}$. The set $\{\sigma \in \Nrv(\H) : f(\sigma) > 0\}$ is arbitrarily ordered as $\{ADF, BDF, C_1C_2F\} =: \{\sigma_0, \sigma_1, \sigma_2\}$. We compute 
\[s_0 := 0, \quad s_1 := 1, \quad s_2 := 1+1=2, \quad s_3 := 1+1+2 = 4.\]
From here we assign $F(\sigma_j) = \{v_{s_j+1}, \ldots, v_{s_{j+1}}\}$:
\begin{align*}
F(\sigma_0=ADF) &= \{v_{0+1}, \ldots, v_{1}\} = \{v_1\}\\
F(\sigma_1=BDF) &= \{v_{1+1}, \ldots, v_{2}\} = \{v_2\}\\
F(\sigma_2=C_1C_2F) &= \{v_{2+1}, \ldots, v_{4}\} = \{v_3, v_4\}.
\end{align*}
Finally, this gives us the following hypergraph structure following the discussion between the proofs of Lemma \ref{lem:unique_f} and Proposition \ref{prop:unique_wtd_nerve}:
\begin{align*}
    \sigma(v_1) &= \{A, D, F\}\\
    \sigma(v_2) &= \{B, D, F\}\\
    \sigma(v_3) = \sigma(v_4) &= \{C_1, C_2, F\}
\end{align*}
%\[
%\begin{blockarray}{ccccccc}
% & A & B & C_1 & C_2 & D & F \\
%\begin{block}{c(cccccc)}
% v_1 & 1 & 0 & 0 & 0 & 1 & 1  \\
% v_2 & 0 & 1 & 0 & 0 & 1 & 1  \\
% v_3 & 0 & 0 & 1 & 1 & 0 & 1  \\
% v_4 & 0 & 0 & 1 & 1 & 0 & 1  \\
%\end{block}
%\end{blockarray}
% \]
This is the same as the structure for the original hypergraph with $v_1 \mapsto a$, $v_2\mapsto b$, $v_3\mapsto c_1$, and $v_4\mapsto c_2$.
\end{example}

This property of being able to recover the hypergraph from its weighted nerve is in contrast to all of the other constructions we have explored in this survey. Indeed, there are many hypergraphs (those in the same hyperblock) that have the same upper closure. We see in the following examples that the same is true for the ResBS, RelBS, and polar complex if you consider only the structure of the resulting simplicial complex (not vertex or simplex labels), although it is not the hyperblocks that provide the equivalences for these constructions.

\begin{example}
    For the ResBS, observe that any hypergraph that only has toplexes has no edge containment relations and therefore its ResBS is a set of vertices, one for each toplex, with no edges or higher simplices. So, any two non-isomorphic hypergraphs that have the same number of edges, and all edges are toplexes, have the same ResBS.
    Other examples of groups of non-isomorphic hypergraphs with isomorphic ResBS can be constructed with more complex ResBS.
\end{example}

\begin{example}
    For the RelBS, hypergraphs with only toplexes also provide an example, but we will be more specific. Consider any two non-isomorphic hypergraphs each with $N$ toplexes of size 3 and no sub-edges. Independent of how the toplexes intersect their barycentric subdivisions with the missing subcomplexes identified to a point in each will be isomorphic to a wedge of 2-spheres, $\bigvee_{i=1}^N S_2$.
\end{example}

\begin{example}
The following two hypergraphs have same polar complex structure:
\[
    \H_1 = \big(\{a, b, c, d\}, \{ab, bc, bd\}\big), \qquad
    \H_2 = \big(\{\bar{a}, b, \bar{c}, \bar{d}), \{ \bar{a}b\bar{d}, b\bar{c}\bar{d}, \bar{a}b\bar{c}\}\big).\]
Namely, 
\[
\Gamma(\H_1) = \Gamma(\H_2) = \Delta\big(\{ ab\bar{c}\bar{d}, \bar{a}bc\bar{d}, \bar{a}b\bar{c}d\}\big).
\]
\end{example}

Likewise, it is also true for the chain complexes for embedded and path homology. Recall from Proposition~\ref{prop:path_constant_hyperblock} that the path homology of a hypergraph depends only on its maximal faces. 
The proof in \cite{GrigoryanJimenezMuranov2019} uses the observation that the path complex for a hypergraph is the same as the path complex for a sub-hypergraph consisting only of its maximal edges.
Therefore, just like the upper closure the path complex is constant across hyperblocks.
For embedded homology it's not so straightforward, as we see in the next example. Adding certain sub-edges, to stay in the same hyperblock, does not change the embedded homology chain complex. But there are other cases where the embedded homology chain complex varies within a hyperblock.

\begin{example}
    The embedded homology has a high bar for including hyperedges (or linear combinations of hyperedges) in $\infgroup_p$. They must be in the preimage under $\bdr_p$ of something in $\Dgroup_{p-1}$ and also be in $\Dgroup_p$. So finding an example where adding in some sub-edges doesn't change these $\Dgroup_p$ is not difficult. Consider $\H_1$ with edge set $\{abc, ab, a, b\}$ and $\H_2$ with edge set $\{abc, ab, bc, a, b\}$. In both cases $\infgroup_2=\langle 0 \rangle$, $\infgroup_1 = \langle ab \rangle$, and $\infgroup_0 = \langle a, b \rangle$. It's not until you add in the edge $ac$, which completes the $ab-ac+bc$ cycle, that you get $\infgroup_2 = \langle abc \rangle$, $\infgroup_1 = \langle ab, ab-ac+bc \rangle$, and $\infgroup_0=\langle a,b\rangle$.
\end{example}

It is also worth noting that a hypergraph cannot be reconstructed from its line graph, even one that is weighted by the number of vertices in each pairwise intersection. In fact, a hypergraph cannot be reconstructed from its weighted line graph together with the weighted line graph of its dual~\cite{everett2013dual,kirkland2018two}. However, Proposition~\ref{prop:unique_wtd_nerve} shows that when one adds in all of the multi-way intersections weighted by their size, there is enough information to recover the hypergraph.

Our weighted nerve is closely related to structures introduced in \cite{robinson2022cosheaf} and \cite{PhysRevE.106.034319}, though what we do with them is different. 
In \cite{robinson2022cosheaf}, Robinson introduces two weightings of the Dowker complex of a binary relation (analogous to the upper closure of the hypergraph rather than the nerve): the total weight and the differential weight. 
When a binary relation is interpreted as the incidence matrix of a hypergraph, the Dowker complex with total weight turns out to be the weighted nerve of the dual of the hypergraph. The differential weight is the same as the auxiliary weighting (the $f$ function) that we construct in the proof of Lemma~\ref{lem:unique_f}. Robinson proves that the relation is recoverable from both weighted Dowker complexes through algorithmic proofs, but he does not make the M\"{o}bius connection between them. 
Baccini et al.~\cite{PhysRevE.106.034319} also study two weightings of the upper closure, though the weights that play the role of Robinson's differential weights can be more general. This paper also comments that a reconstruction is possible. 
Robinson's work proceeds by defining a cosheaf and Baccini et al.~introduce a weighted Hodge Laplacian from the weighted simplicial complexes.
We leave it as an open question to explore the connection between the cosheaf, the weighted Hodge Laplacian, and the persistent homology filtration of the weighted nerve that we define next.

It follows directly from the definition of the weighted nerve that if $\sigma \subseteq \tau$,  $\sigma$ corresponds to fewer hyperedges and thus $w(\sigma) \geq w(\tau)$. This allows us to construct a filtration of the weighted nerve and define the weighted nerve persistent homology of a hypergraph.
\begin{definition}
    Let $\H$ be a hypergraph with no isolated vertices and $\Nrv_w(\H)$ be its weighted nerve. We create a filtration $X_n \subseteq X_{n-1} \subseteq \cdots \subseteq X_0$ where $X_k = \{\sigma \in \Nrv_w(\H) : w(\sigma) \geq a_k\}$ (for some threshold $a_k$ that is monotonically decreasing). The persistent homology of this filtration is the \emph{weighted nerve persistent homology} of hypergraph $\H$. %\assn{Lori}{Should we add the following sentence because barcode is not defined elsewhere?} 
    A persistence \emph{barcode} records the interval decomposition corresponding to the homology basis for each homological dimension $p$.
\end{definition}

\begin{example}[Figure \ref{fig:rest_bary_ex} running example]
To continue the example in Figure \ref{fig:wtd_nerve_ex}, we get the following filtration using threshold values $a_2=4, a_1=2, a_0=1$:
\[
 \{F\} \subseteq\{C_1, C_2, F, D, C_1C_2, C_1F, C_2F, C_1C_2F, DF\} \subseteq \Nrv_w(\H).
\]
Applying persistent homology to this filtration, we get a single bar $[4, -\infty)$ in the $\Hgroup_0$ barcode and no other higher dimensional structure.
\end{example}

%\assn{all}{Should this paragraph be kept? We wouldn't use functoriality in the same way as the others because a filtration on a hypergraph would give rise to a filtration of weighted nerves (whatever that would means...) and then you'd have to do persistence of persistence. Or maybe it'd be 2-parameter persistence where one parameter is the hypergraph filtration and the other is the $k$ in the filtration of the weighted nerve. We could describe this as potential future development?}
%\assn{Lori}{I think it makes sense to include it!}
%\cEP{This paragraph still needs a careful overhaul.}
In prior sections, we proved or cited conditions such that some of the simplicial and chain complex constructions are functorial.
In the case of the weighted nerve, we again reference Robinson's work where he proves that the construction of a specific cosheaf (and sheaf) from a relation, such that the (co)stalks determine the total weight function of the ``Dowker complex,'' is functorial from the category of relations to the category of (co)sheaves \cite[Theorems 4 and 5]{robinson2022cosheaf}.
While this seems to be promising for our weighted nerve (which is dual to the weighted upper closure), it is not clear that his result directly implies functoriality of the weighted nerve since the duality map is not a functor from $\MHyp$ to itself. 
So we cannot simply take the dual and then apply Robinson's functor.
However, since the hypergraph is fully recoverable from the weighted nerve there may be a proof, analogous to Robinson's, that goes directly to the weighted nerve.
Additionally, the fact that duality is a functor on $\Rel$, where Robinson's work lives, coupled with the fact that $\MHyp$ is a subcategory of $\Rel$ could be another direction.
We leave this as an open question.

\subsection{Homology theories for hypergraphs via categorification}
\label{sec:mag_chrom}

In recent decades, categorification has emerged as a powerful tool in mathematics, particularly in capturing subtle hidden structures overlooked by traditional enumerative methods. Formally, categorification involves elevating an 
$n$-category to an $(n+1)$-category, $n \geq 0$, akin to lifting the Euler characteristic of a topological space to its (singular) homology. Advantages of the singular homology include, but are not limited to, functoriality and distinguishing spaces with the same Euler characteristic. 
% This approach reveals deeper insights by representing classical objects through more complex, algebraically enriched counterparts, often uncovering profound and elegant mathematical structures.
The opposite process is often referred to as decategorification. For example, Betti numbers can be thought of as a decategorification of homology. 
% As an example, we can interpret Betti numbers or the Euler characteristic as decategorifications of these homology theories, where the intricacies of the underlying structures are reduced to the simpler concept of a vector space's dimension. 
% The focus of this chapter are two homology theories for hypergraphs magnitude \cite{} and chromatic \cite{} which 

Categorification, first introduced and popularized in knot theory \cite{khovanov1999categorification, bar2002khovanov}, has inspired a number of categorifications in graph theory \cite{chandler2023strength, eastwood2007euler, HGR2005, HGRArb2005, sazdanovic2018categorification}. 
% Chromatic hypergraph homology \cite{aslam2023categorifying} belongs to this group as it categorifies the chromatic polynomial. 
Categorifying a 1-variable polynomial with integer coefficients for graphs, such as the chromatic polynomial, amounts to constructing a homology theory such that the Betti number of the $i$-th homology group equals the $i$-th coefficient of the polynomial. 
% Categorifying a 1-variable polynomial with integer coefficients can be thought of as reversing the process of taking the Euler characteristic of a homology theory.
While the Euler characteristic can be thought of as an integer-valued ``compression'' of a finitely-nonzero sequence of finite-dimensional
vector spaces over a field  by computing the alternating sum of dimensions \cite{ghrist2018persistent}, a categorification consists of constructing a meaningful sequence of vector spaces whose dimensions are equal to the integer coefficients of the polynomial. Two main examples of homology theories for hypergraphs that fit into this framework are the recently introduced chromatic and magnitude homology theories \cite{aslam2023categorifying,bi2024magnitude}, both motivated by active research on analogous invariants for graphs.
In the remainder of this section we provide a high level description and some properties for both of these categorification-based homology theories and point the reader to relevant papers for the details. 

\subsubsection{Chromatic homology}
Similar to the choice of group or field to serve as coefficients in the chain groups for standard homology, in chromatic homology for graphs \cite{eastwood2007euler, baranovsky2012graph} and simplicial complexes \cite{CDS2019} one must select either an algebra \cite{HGR2005, HGRArb2005} or a manifold to serve as the labels (or colors) for groups of hyperedges.
This provides a family of chromatic homology theories that have different properties depending on the choice of algebra or manifold.
%Chromatic homology theories, in addition to a graph involve either a choice of algebra \cite{HGR2005, HGRArb2005} or a manifold as in the case of graphs \cite{eastwood2007euler, baranovsky2012graph} and simplicial complexes \cite{CDS2019}. This choice, akin to the choice of coefficients in standard homology,  provides a way to obtain a family of chromatic homology theories that might have different properties. 
The \emph{state sum chromatic homology} for hypergraphs was introduced in \cite{aslam2023categorifying} and represents one way to generalize chromatic homology from graphs and simplicial complexes to hypergraphs.
This is a bigraded homology where chain groups, $\Cgroup_{i,j}$, are generated by sub-hypergraphs with $i$ hyperedges and $j$ connected components, whose components are labeled (colored) by elements of the chosen algebra or manifold.
The boundary map from $\Cgroup_{i, j}$ to $\Cgroup_{i+1, j}$ is induced by tracking how labeled components are affected when adding individual hyperedges to the sub-hypergraph generators of $\Cgroup_{i,j}$ to obtain the sub-hypergraph generators of $\Cgroup_{i+1,j}$.
To fully define state sum chromatic homology for hypergraphs requires several additional concepts that can be found in \cite{aslam2023categorifying}, along with the hypergraph configuration spaces.
%In the \emph{state sum chromatic homology}, one considers connected components of sub-hypergraphs labeled by elements of the chosen algebra as generators of chromatic hypergraph chain groups. The boundary map is the algebra map induced by adding hyperedges to sub-hypergraphs.  

We briefly note some basic observations about chromatic homology in simple cases. If a hypergraph contains a hyperedge of size 1 then its chromatic homology is trivial. 
If $\H \in \MHyp$ has multi-edges then the chromatic homology of $\H$ is the same as the chromatic homology of $Col(\H)$, obtained by collapsing repeated edges into a single edge.
Additionally, chromatic homology theories yield strictly stronger invariants for hypergraphs than the chromatic polynomials they categorify, which is not always the case for graphs \cite{lowrance2017chromatic}. For example, one can construct a pair of hypergraphs with identical chromatic polynomials but distinct chromatic homology by adding a hyperedge containing all vertices to each hypergraph in a pair of cochromatic hypergraphs \cite{aslam2023categorifying}. 
% We also note the following properties:
% \begin{itemize}
%     \item  If a hypergraph contains a hyperedge of size $1$, then its homology is trivial.
%     \item Chromatic homology is preserved if all repeated hyperedges (edges with the same vertex set) are replaced by a single edge, and vice versa. 
%     \item Let $\theta_n$ denote the $n$-uniform hypergraph with a single hyperedge for $n > 1$ that contains all vertices. Then with algebra $\mathcal{A}_2=\mathbb{Z}/(x^2=0)$ we have that homology is supported in degree zero:     
%     $CH_0(\theta_n) = \mathbb{Z}^{n-1}\{1\} \oplus \bigoplus_{j = 2}^n \mathbb{Z}^{\binom{n}{j}}\{j\}$.
% For example, for 
%  $\theta_2=\{[2],\{\{ab\}\}$ we have $\betti{C}{0}(\theta_2)=2$ since $CH_{0,0}(\theta_2)=\mathbb{Z}=CH_{0,1}$, and for $\theta_3=\{[3],\{\{abc\}\}$ the chromatic homology is 
%  $CH_{0,0}(\theta_3)=\mathbb{Z}=CH_{0,3}$  and $CH_{0,2}(\theta_3)=\mathbb{Z}^{\oplus 3}$ so $\betti{C}{0}(\theta_3)=1+3+1=5.$ 
% \end

%%%%%%%%%%%%%%%%%%%%%%%%%%%%%
%magnitude

\subsubsection{Magnitude homology}
The Euler characteristic for finite categories \cite{Leinster2008EulerChar} leads to a power series invariant of graphs called magnitude \cite{leinster_2019}.  Hepworth and Willerton categorified magnitude of a graph, referred to as the magnitude homology \cite{hepworth2017categorifying}, that inspired a geometric definition \cite{asao2021geometric}, Eulerian \cite{giusti2024eulerian}, blurred \cite{otter2022magnitude}, and persistent magnitude homology \cite{govc2021persistent}, whose structure and properties have been extensively analyzed \cite{asao2024girth, gu2018graph, kaneta2021magnitude, sazdanovic2021torsion, martin2025torsion}. The original construction by Hepworth and Willerton and the Eulerian magnitude homology by Giusti and Menara are generalized from graphs to digraphs by Huntsman \cite{huntsman2022magnitude, huntsman2023magnitude}, and to hypergraphs by Bi, Li, and Wu
\cite{bi2024magnitude}. 

The magnitude homology of hypergraphs can be introduced by using similar language to the path homology discussed in Section \ref{sec:path-homology}, and it is known that there is a spectral sequence from magnitude to path homology in the case of graphs \cite{asao2023magnitude}.  To define magnitude homology for hypergraphs Bi, Li, and Wu first generalized the notions of paths and distances between vertices of graphs to edges of hypergraphs. This enables them to compute path length with values in $\mathbb{Z}[\frac{1}{2}]$ since proper edge intersection of sequential edges counts as a full hop whereas edge containment counts as a half hop. 
The fact that sequences of \emph{hyperedges} generate magnitude chain groups for hypergraphs resembles the path homology of the dual hypergraph.  
The magnitude boundary map is roughly the same as the path homology boundary map, obtained by removing a hyperedge from the sequence in all possible ways, with the additional condition that only those paths obtained by removing a hyperedge  from the sequence that have the same length as the original path are in the image. In this same work the authors prove that the construction is functorial from a category of hypergraphs where morphisms preserve edge containment relations (this is neither $\Hyp$ nor $\MHyp$ but closer to a poset category) to the category of bigraded abelian groups.
The authors also define the concept of \emph{simple} magnitude for hypergraphs as an analogue of the regular path homology and a generalization of the magnitude for graphs. 

% % These two categorification-based homology theories are still relatively early in their development.
% We encourage interested readers consult the references above for details on their construction, intuition on their properties, and opportunities for further research.

\subsection{Other topological notions of hypergraphs}
\label{sec:other}

In this paper, we have focused on simplicial, relative, and chain complex homology for hypergraphs. There are a variety of other methods in the literature to create topological structure from hypergraphs either from a topological or homological perspective. We briefly highlight a sampling of methods here for the interested reader to explore further. 

Deepthi and Ramkumart create open sets from hyperedges to form a topological space, but they do not consider homology \cite{deepthi2021hypergraph}. Diestel builds a homology theory for directed hypergraphs \cite{diestel2020homological}. In \cite{chung1992cohomological}, Chung and Graham consider cohomology rather than homology, with a particular focus on the space of all $\kappa$-uniform (i.e., every hyperedge has size $\kappa$) hypergraphs on a fixed vertex set rather than a single hypergraph. Emtander also restricts to $\kappa$-uniform hypergraphs as well as complete multi-partite hypergraphs, building simplicial complexes and computing Betti numbers from a Stanley-Reisner ring. The paper \cite{nguyen2020community} computes persistent homology on a hypergraph by instead constructing a metric from hypergraph data and \cite{liu2022neighborhood} builds a persistence theory parametrized by a weight function by combining embedded homology with a notion of a neighborhood hypergraph model, and then applies this to discern the structure of molecules.

\section{Building Intuition}
\label{sec:intuition}

%\begin{itemize}
%    \item Small illustrative examples to show the difference among all of them. Here are some ideas.
%    \begin{itemize}
%        \item (from old Overleaf) All the hypergraphs consistent with the 2-edge graph path. 
%        \item Hollow tetrahedron with only the two dimensional faces
%        \item Add some 1-dim edges
%        \item Add some 0-dim vertices
%        \item Add a large hyperedge and a new vertex (this is from Emilie's JMM talk)
%        \item Triangulated hexagon (with center vertex to make all triangles) with boundary missing creates a bubble in RelBS
%    \end{itemize}
%    \item Discussion of duality and variation across hyperblock. Show that for the most part the homology of a hypergraph and its dual are different. But also show that two hypergraphs with the same closure may have different homology (distinguish across hyperblock). 
%\end{itemize}

Throughout Section \ref{sec:homology}, we provided a running example, computing all surveyed homology theories for the same hypergraph shown in Figure \ref{fig:rest_bary_ex}(a). 
This served to ground the theories in something real; now in this section we provide even more examples, limited to reporting betti numbers, in an attempt to better understand the nuances of each homology theory and build intuition. We do not claim to provide a complete characterization of any of these theories, but instead try to provide some insight into questions like: is the homology the same for all hypergraphs in a hyperblock? If the homology is not constant on the hyperblock, how does adding or removing sub-edges affect the homology? How does topological structure change by adding super-edges? Is there a relationship between the homology of a hypergraph and that of its dual?
Future work could tackle these questions more completely and prove which hypergraph modifications result in what kind of homology changes for each theory.

In Table \ref{tab:intuition}, we show the Betti numbers for several examples across multiple hyperblocks. 
In the following subsections, we make some remarks about the questions in the prior paragraph using examples from this table. 
%We focus on fairly small hypergraphs so that computations can be completed. 
Row 2 corresponds to our running example in Figure \ref{fig:rest_bary_ex}(a) and row $2^*$ is its dual.
We use publicly available Python code in HyperNetX\footnote{\url{https://github.com/pnnl/hypernetx}} (HNX) \cite{Praggastis2024} to compute simplicial homology of the upper closure ($\betti{\Delta}{\bullet}$) and polar complex ($\betti{pol}{\bullet}$). %We construct the polar complex from a hypergraph and then use HNX to compute its homology. 
For ResBS and RelBS homology ($\betti{res}{\bullet}$ and $\betti{rel}{\bullet}$), we use an HNX module shared directly with us by Christopher Potvin that implements computational efficiencies proved in his thesis \cite{potvin2023hypergraphs}; this will be included in a future HNX release. 
We use preliminary code in a publicly available Julia package\footnote{\url{https://github.com/kylekoyanagi/PathHomology/}} written by Kyle Koyanagi to compute regular hypergraph path homology ($\betti{reg}{\bullet}$). 
We compute embedded homology ($\betti{emb}{\bullet}$) and weighted nerve persistent homology (shown in Table~\ref{tab:intuitionBarCode}) by hand. 
For examples and intuition on chromatic and magnitude homology see their respective papers.
%The paper \cite{bi2023magnitude} shows detailed examples of the bi-graded magnitude homology, including the example in Row 7 of Table \ref{tab:intuition}. 
%As for chromatic homology, in Section \ref{sec:mag_chrom}, we provided some intuition on how hypergraph structure affects the homology.\cEP{Make sure those things are in the new mag/chrom section.} Since computation of chromatic homology by hand is more involved, we leave our intuition to those observations.

\begin{table}
\resizebox{\columnwidth}{!}{
\begin{tabular}{c|c||cccc|cc}
Row \# & Hypergraph  & $\betti{\Delta}{\bullet}$ & $\betti{res}{\bullet}$ & $\betti{rel}{\bullet}$ & $\betti{pol}{\bullet}$ & $\betti{emb}{\bullet}$ & $\betti{reg}{\bullet}$\\
\hline\hline
1 & $abc, ab, bc$        & [1,0,0] & [1,0] & [0,1,0] & [1,0,0]  & [0,0,0] & [1,0] \\
2 & $abc, ab, a, b, c$   & [1,0,0] & [1,0,0] & [0,1,0] & [1,2,0] & [2,0,0] & [1,0] \\
\hline
$2^*$ & $ADF, BDF, CF$ & [1,0,0] & [3,0,0] & [0,1,2] & [1,0,0,0,0] & [0,0,0] & [1,0]\\
\hline
3 & $ab, bc$             & [1,0] & [2,0] & [0,2] & [1,0,0] & [0,0] & [1,0]\\
4 & $+a$          & [1,0] & [2,0] & [0,1] & [1,0,0] & [1,0] & [1,0]\\
5 & $+b$          & [1,0] & [1,0] & [0,1] & [1,0,0] & [1,0] & [1,0]\\
6 & $+a, c$       & [1,0] & [2,0] & [0,0] & [1,1,0] & [1,0] & [1,0]\\
7 & $+a, b$       & [1,0] & [1,0] & [0,0] & [1,0,0] & [1,0] & [1,0]\\
8 & $+a, b, c$    & [1,0] & [1,0] & [1,0] & [1,1,0] & [1,0] & [1,0]\\
\hline
9 & $abc, abd, acd, bcd$ & [1,0,1] & [4] & [0,0,4] & [1,0,1,0] & [0,0,0] & [1,0]\\
10 & $+ a$                & [1,0,1] & [2,0] & [0,0,2] & [1,0,1,0] & [1,0,0] & [1,0]\\
11 & $+ a, b$             & [1,0,1] & [1,1] & [0,1,1] & [1,0,2,0] & [2,0,0] & [1,0]\\
12 & $+ a, b, c$          & [1,0,1] & [1,3] & [0,3,1] & [1,0,4,0] & [3,0,0] & [1,0]\\
13 & $+ a, b, c, d$       & [1,0,1] & [1,5] & [0,5,1] & [1,0,7,0] &[4,0,0] & [1,0]\\
14 & $+ ab$               & [1,0,1] & [3,0] & [0,0,3] & [1,0,1,0] & [0,0,0] & [1,0]\\
15 & $+ ab, a, b$         & [1,0,1] & [1,0,0] & [0,0,1] & [1,0,1,0] & [1,0,0] & [1,0]\\
16 & $+ ab, bc, a, b, c$  & [1,0,1] & [1,1,0] & [0,1,1] & [1,0,2,0] & [1,0,0] & [1,0]\\
17 & $+ ab, bc, ac$       & [1,0,1] & [1,0]   & [0,0,1] & [1,0,1,0] & [0,0,0] & [1,0]\\
18 & $+ ab, bc, ac, a, b, c$ & [1,0,1] & [1,0,0] & [0,0,1] & [1,0,1,0] & [0,0,0] & [1,0]\\
\hline
19 & $abcde, abc, abd, acd, bcd$  & [1,0,0,0,0] & [1,0] & [0,0,0,5,0] & [1,0,0,1,0] & [0,0,1,0,0] & [1,0]\\
20 & $+ ab,bc,ac$     & [1,0,0,0,0] & [1,0,0] & [0,0,0,0,0] & [1,0,0,1,0] & [0,0,1,0,0] & [1,0]\\
21 & $+ abcd$          & [1,0,0,0,0] & [1,0,0] & [0,0,0,0,0] & [1,0,0,0,0] & [0,0,0,0,0] & [1,0]\\
\hline
22 & $abcde,bcdfg,abc,acd,abd$ & [1,0,0,0,0] & [2,0] & [0,0,0,3,1] & [1,0,0,1,0,0,0] & [0,0,0,0,0] & [1,0]\\
\end{tabular}
}
\caption{Table of Betti numbers for each homology theory across a variety of hypergraphs using $\K=\Z/2\Z$ coefficients. The horizontal lines break the table into distinct hyperblocks. The $+$ notation means we add the list of hyperedges in that row to the hypergraph in the first row of each hyperblock. For instance, in Row 14, we add hyperedge $\{a, b\}$ to the four 3-edges (2-faces) of the tetrahedron in Row 9. Row 2* corresponds to the dual of the hypergraph in Row 2. In each cell, the vector is $[\beta_0, \beta_1, \cdots, \beta_k]$ for some value $k \leq dim(K)$, where $K$ is the topological object we are taking homology of.}
\label{tab:intuition}
\end{table}

\paragraph{A note on computational complexity}
Although this survey is focused on the theoretical definitions and not the computational aspects of these homology theories, whether or not they are computable for real-world (large) hypergraphs may be a factor in which homology theory is chosen for a particular application. 
Therefore, we spend a little time here discussing complexity of these various homology theories.

The computation of simplicial homology, or chain complex homology with finite dimensional $\Cgroup_p$, amounts to reducing boundary matrices to their Smith normal form. Gaussian elimination can be performed if homology is computed over a field.
These algorithms are known to be polynomial in the number of rows and columns of the matrix, which is equivalent to the number of simplices in the simplicial complex or the sum of the dimensions of $\Cgroup_p$ (across all dimensions in both cases).
For example, persistent homology over a field is known to be $O(n^3)$ where $n$ is the number of simplicies \cite{edelsbrunner2000topological,zomorodian2005computing}.
Therefore, much of what we report here will be bounds on the number of simplices or dimensions of $\Cgroup_p$.
We will not discuss the time or space complexity of constructing the topological objects.
In all cases assume we have hypergraph $\H = (V, E)$ with $|V|=n$ and $|E|=m$ and we are computing homology up to dimension $D$. To compute homology up to dimension $D$ we need the simplicial complex up to dimension $D+1$ and the chain complex up to $\Cgroup_{D+1}$.

For the hypergraph upper closure the number of simplices up to dimension $D+1$ is bounded above by the sum of the sizes of the powerset of the edges. When edges intersect of course the principle of inclusion-exclusion will reduce this, but for a coarse upper bound we get the following sum:
\[|\Delta(\H)| \leq \sum_{d=1}^{D+2} \sum_{e \in E} {|e| \choose d}.\]
If the max edge size is $M$ we can get a simpler (but larger) upper bound:
\[|\Delta(\H)| \leq m\sum_{d=1}^{D+2} {M \choose d}.\]

The number of \emph{vertices} in the ResBS is $m$, the number of edges in $\H$. Computing the exact number of simplices in $\bary^\ECP_{res}$ would require knowing the lengths of all of the chains in $\ECP(\H)$. 
In the worst case scenario the hyperedges in $\H$ form a total order such that $e_1 \subseteq e_2 \subseteq \cdots \subseteq e_m$ and in this case there are $2^m$ total simplices in $\bary^\ECP_{res}$, or $\sum_{d=1}^{D+2} {m \choose d}$ up to dimension $D+1$.
Therefore, 
\[|\bary^\ECP_{res}(\H)| \leq \sum_{d=1}^{D+2} {m \choose d}.\]

To compute the RelBS homology via the original definition requires Gaussian elimination for the full barycentric subdivision and for the missing subcomplex. 
However, using a result in Potvin's thesis \cite[Theorem 5.2.3]{potvin2023hypergraphs} only requires the homology of the missing subcomplex and the upper closure, not the full barycentric subdivision.
A complexity analysis has not yet been performed for this algorithm and we leave that as an open question.

An upper bound for the number of simplices in the polar complex is straightforward since each maximal simplex is size $n$ and there are $m$ of them:
\[|\Gamma(\H)| \leq m\sum_{d=1}^{D+2} {n \choose d}. \]

Of all of the simplicial homology algorithms, we can see that ResBS requires the smallest topological object in terms of an upper bound on the number of simplices whereas the worst is likely the RelBS homology (even with Potvin's shortcut). 
However, our treatment here is admittedly simplistic and a more rigorous complexity analysis providing algorithms for each method would be of interest.

Moving to the chain complex based methods, embedded homology has been well-studied, including computational algorithms. In \cite{liu2024computing} the authors propose an algorithm for computation of (persistent) embedded homology over field coefficients with time complexity $O(m^4)$.
Complexity of path homology for graphs has also been rigorously analyzed \cite{chowdhury_persistent_2018,dey2022efficient}, but hypergraph path homology is more recent and less explored. 
Koyanagi shared with us that his algorithm provided in the software mentioned above has complexity $O(d^2 n^{d+2} m)$ for $d$-dimensional homology \cite{koyanagi2025personal}.

\subsection{Some initial intuition}
\label{sec:init_intuit}
We begin with the example in Row 19 of Table~\ref{tab:intuition} presented to us by Peter Bubenik. 
He posited that this example illustrates a hypergraph that should ``intuitively" have some homology in dimension 2. The hypergraph consists of the four 3-edges (2-faces) of a tetrahedron, together with a 5-edge containing all 4 vertices of the tetrahedron plus 1 additional vertex. One could consider this as a solid sphere (representing the 5-edge) with a tetrahedral void inside, which should have homology in dimension 2. Of course, the upper closure of this hypergraph is just the 4-dimensional simplex which has only $\betti{\Delta}{0}=1$ and no higher-dimensional homology. The ResBS of this example is also trivial as there is only one toplex. It is interesting that $\betti{rel}{3}=5$, so somehow quotienting by all of the missing sub-edges (of which there are many!) results in five 3-dimensional voids. But this is difficult to visualize. 
We also see in the table that $\betti{pol}{3}=1$\footnote{The generator is $\{abce, abde, acde, bcde, abc\bar{e}, abd\bar{e}, acd\bar{e}, bcd\bar{e}\}$ which is all 2-faces in $abcd$ union $e$ and all 2-faces in $abcd$ union $\bar{e}$}. 
Embedded homology is the only homology theory computed here that aligns with the intuition that $\betti{emb}{2}=1$.
We will come back to this example when we talk about adding sub-edges and super-edges. 

A suggestion from Bubenik was to create a filtration of simplicial complexes, $\{K_i(\H)\}$, where $K_i(\H) = \Delta(\{e \in \H : |e| \leq i\})$. In this example, $K_0 = \cdots = K_2 = \emptyset$, $K_3 = K_4 = \Delta(\{abc, abd, acd, bcd\})$, and $K_5 = \Delta(\H)$. The persistent homology of this filtration has a bar $[3,5)$ in dimension 2 so this filtration ``sees'' the dimension 2 homology. However, if edge $bcd$ is replaced by edge $bcdfg$ (adding new vertices $f$ and $g$), the tetrahedral void is both formed and killed at $K_5$ and so there is no bar in $\hom{}{2}$. This new edge replaces face $bcd$ with a larger edge and so whether this ``should'' have homology in dimension 2 is not clear. We show the Betti numbers for all other homology theories on Row 22 of the table and see that the feature that was in embedded homology in dimension 2 is no longer present. $\betti{res}{\bullet}$ also changes from row 19 to row 22 while $\betti{pol}{\bullet}$, $\betti{\Delta}{\bullet}$, and $\betti{reg}{\bullet}$ remain the same.

\subsection{Hyperblock variation}
In Section \ref{sec:closure-homology}, we observed that the closure homology is the same for all hypergraphs in a hyperblock since, by definition, all of these hypergraphs have the same upper closure.
Similarly, in Section \ref{sec:path-homology}, we noted that in \cite[Proposition 5.6]{GrigoryanJimenezMuranov2019} the authors prove hypergraph path homology depends only on the maximal hyperedges and is therefore also constant on hyperblocks.
However, it is clear from the table that $\hom{res}{\bullet}$, $\hom{rel}{\bullet}$, $\hom{emb}{\bullet}$, and $\hom{pol}{\bullet}$ are not constant on hyperblocks. In the next two subsections, we will make some observations about how adding sub- and super-edges can change the structure for these homology theories.

\subsection{Adding sub-edges}
When adding an edge, $e$, to a hypergraph, if there is already an edge $f$ such that $e \subseteq f$, we say that we are adding a \emph{sub-edge}. This does not change the hyperblock but may change the homology for some of these homology theories. Here we describe some intuition gleaned from Table \ref{tab:intuition} about how adding sub-edges affects the homology of the various topological objects.
\\
\paragraph{\textbf{Restricted barycentric homology}}
Recall from Proposition \ref{prop:res_max} that $\betti{res}{0}$ is maximized within a hyperblock by the simple hypergraph, the one with no sub-edges. If there are no sub-edges, then the edge containment poset is just the set of isolated vertices, one for each hyperedge, and so in addition to $\betti{res}{0}$ being maximized, $\betti{res}{k}=0$ for $k \geq 1$. As sub-edges are added, components can merge only if the added edge is a sub-edge of more than one edge. Structure can also be created in dimension 1 by adding sub-edges in a specific way. The canonical example is illustrated in moving from Row 9 to Row 11 in Table \ref{tab:intuition}. Adding edges $a$ and $b$ to the hypergraph that already contains edges $abc$ and $abd$ creates a cycle between those four hyperedges in $\ECP(\H)$, $a < abc > b < abd > a$, which translates to the 1-cycle $[a, abc]+[abc, b]+[b, abd]+[abd,a]$. In the general case, whenever there is an intersection between two hyperedges, $e \cap f \neq \emptyset$, you can create a 1-dimensional cycle in $\bary_{res}(\H)$ by adding two non-comparable sub-edges, $g$ and $h$, within $e \cap f$. Continuing this operation of adding two non-comparable edges into the intersection $g \cap h$ will lift this 1-cycle into a 2-dimensional cycle. Adding these two edges effectively creates a suspension of the 1-cycle, creating an octahedron. 
This process can be continued to form higher dimensional homological features.
Alternatively, we can destroy cycles by adding a sub-edge into the common intersection of all edges involved in the cycle, if an intersection exists. 
We thank Clara Buck for her insight into how adding sub-edges affects the ResBS homology.
\\
\paragraph{\textbf{Relative barycentric homology}}
In Potvin's thesis, he proved that if a hypergraph is simple then $\betti{rel}{k-1}(\H)$ is equal to the number of hyperedges of size $k$ \cite[Theorem 4.3.9]{potvin2023hypergraphs}. However, it's not clear what adding sub-edges can do to the structure in general. We see in Rows 9-18 that adding subedges to the tetrahedron can change the homology significantly in $\betti{rel}{\bullet}$. This is true also for $\betti{res}{\bullet}$ but we understand more about how that structure is formed and destroyed, as discussed above. It remains an open question to characterize what hypergraph structure cycles in $\hom{rel}{\bullet}$ correspond to, even in dimension 1.
\\
\paragraph{\textbf{Polar complex homology}} 
Little is known about the polar complex homology, but we observe in Table \ref{tab:intuition} that there is less variation in $\betti{pol}{\bullet}$ than in most of the other homology theories (except for $\betti{reg}{\bullet}$). We know that the total number of vertices in $\pol(\H)$ is $2|V|$, each maximal simplex is of size $|V|$, and there are $|E|$ such maximal simplices. If a sub-edge, $f \subset e$, is added to the hypergraph then $\Sigma(e) \cap \Sigma(f) = f \cup \overline{(V\setminus e)}$. But how this changes the polar complex homology likely depends heavily on the structure of $\pol(\H)$ and where the edge $f$ is added.
%\assn{Emilie}{Not sure what else to say. Polar is weird.}
\\
\paragraph{\textbf{Embedded homology}}
In embedded homology, adding sub-edges helps to create structure, but it requires very specific sets of sub-edges. Given an edge $e$, if all of its sub-edges of size $|e|-1$ are present, then $e$ is present in $\inf_{|e|-1}$. But this is not the only way that $e$ can appear in $\inf_{|e|-1}$. For a set of edges $\{e_1, \ldots, e_k\}$ with $|e_1|=\cdots=|e_k|$ if all edges in $\bdr(e_1+\cdots+e_k)$ are in the hypergraph, then $e_1+\cdots+e_k$ is in $\inf_{|e_i|-1}$. Of course, making $\inf_k$ larger does not necessarily create homological structure in dimension $k$, but having $\inf_k$ be nonzero is required for structure to be present. Therefore, understanding how sub-edges affect construction of the chain complex is a good first step. While embedded homology is well-studied and applied, there remain open questions that will help build intuition and characterize the relationship between structure in $\H$ and structure in $\hom{emb}{\bullet}(\H)$.
\\
\paragraph{\textbf{Weighted nerve persistent homology}}
As the weighted nerve persistent homology corresponds to an interval decomposition of features rather than a single homology computation, we display the barcode results in Table~\ref{tab:intuitionBarCode}. From small examples (e.g., hypergraphs in Rows 1-2, 3-8, or 9-11 and 14), it might seem that adding sub-edges does not change the weighted nerve persistent homology. 
However, we note that the barcode for the hypergraph in Row 19, 
$\PH_0: \{[5,-\infty)\}, \PH_2: \{[1,-\infty)\}$, 
is different from the weighted nerve persistent homology of the hypergraph $\{abcde\}$, which is in the same hyperblock. The barcode for $\{abcde\}$ is just $[5,-\infty)$ in dimension 0 as the weighted nerve is just a 0-simplex with weight 5.
In fact, if you add sub-edges $\{abc, abd, acd\}$, the barcode of the weighted nerve persistent homology is still just $[5,-\infty)$ for $\PH_0$. But closing the inner tetrahedron creates some additional structure, which aligns with the intuition discussed in Section \ref{sec:init_intuit}.
In row 22 of Table \ref{tab:intuitionBarCode}, we see that the weighted nerve still captures nontrivial structure in dimension 2 when the second large edge is added. The weighted nerve is the only homology theory that captures dimension 2 structure in the row 22 example (while polar and RelBS homologies capture structure in dimension 3).
We have not yet delved into studying the types of sub-edges that do or do not change the weighted nerve persistent homology but doing so would be an interesting open question.

\begin{table}
\resizebox{\columnwidth}{!}{
\begin{tabular}{c|c||ccc}
Row \# & Hypergraph  & $\PH_0$ & $\PH_1$ & $\PH_2$ \\
\hline\hline
1 & $abc, ab, bc$        &  $\{[3,-\infty)\}$ & $\emptyset$ & $\emptyset$\\
2 & $abc, ab, a, b, c$   & $\{[3,-\infty)\}$ & $\emptyset$ & $\emptyset$ \\
\hline
$2^*$ & $ADF, BDF, CF$ & $\{[3,-\infty),[3,2)\}$ & $\emptyset$ & $\emptyset$\\
\hline
3 & $ab, bc$             & $\{[2,-\infty), [2,1)\}$ & $\emptyset$ & $\emptyset$ \\
4 & $+a$          & $\{[2,-\infty), [2,1)\}$ & $\emptyset$ & $\emptyset$ \\
5 & $+b$          & $\{[2,-\infty), [2,1)\}$ & $\emptyset$ & $\emptyset$ \\
6 & $+a, c$       & $\{[2,-\infty), [2,1)\}$ & $\emptyset$ & $\emptyset$ \\
7 & $+a, b$       & $\{[2,-\infty), [2,1)\}$ & $\emptyset$ & $\emptyset$ \\
8 & $+a, b, c$    & $\{[2,-\infty), [2,1)\}$ & $\emptyset$ & $\emptyset$ \\
\hline
9 & $abc, abd, acd, bcd$ & $\{[3,-\infty), [3,2) \times 3)\}$ & $\{[2,1) \times 3\}$ & $\{[1,-\infty)\}$\\
10 & $+ a$                &$\{[3,-\infty), [3,2) \times 3)\}$ & $\{[2,1) \times 3\}$ & $\{[1,-\infty)\}$\\
11 & $+ a, b$             & $\{[3,-\infty), [3,2) \times 3)\}$ & $\{[2,1) \times 3\}$ & $\{[1,-\infty)\}$\\
%12 & $+ a, b, c$          & \\%$\{[3,-\infty), [3,2), [3,2), [3,2)\}$ & $\{[2,1), [2,1), [2,1)\}$ & $\{[1,-\infty)\}$\\
%13 & $+ a, b, c, d$       & \\%$\{[3,-\infty), [3,2), [3,2), [3,2)\}$ & $\{[2,1), [2,1), [2,1)\}$ & $\{[1,-\infty)\}$\\
14 & $+ ab$               & $\{[3,-\infty), [3,2) \times 3)\}$ & $\{[2,1) \times 3\}$ & $\{[1,-\infty)\}$\\
%15 & $+ ab, a, b$         & \\%$\{[3,-\infty), [3,2), [3,2), [3,2)\}$ & $\{[2,1), [2,1), [2,1)\}$ & $\{[1,-\infty)\}$ \\
%16 & $+ ab, bc, a, b, c$  & \\%$\{[3,-\infty), [3,2), [3,2), [3,2)\}$ & $\{[2,1), [2,1), [2,1)\}$ & $\{[1,-\infty)\}$\\
%17 & $+ ab, bc, ac$       & \\%$\{[3,-\infty), [3,2), [3,2), [3,2)\}$ & $\{[2,1), [2,1), [2,1)\}$ & $\{[1,-\infty)\}$\\
%18 & $+ ab, bc, ac, a, b, c$  & \\%$\{[3,-\infty), [3,2), [3,2), [3,2)\}$ & $\{[2,1), [2,1), [2,1)\}$ & $\{[1,-\infty)\}$\\
\hline
19 & $abcde,abc, abd, acd, bcd$  & $\{[5,-\infty)\}$ & $\emptyset$ & $\{[1,-\infty)\}$\\
%20 & $+ ab,bc,ac$     & \\
%21 & $+ abcd$          & \\
\hline
22 & $abcde,bcdfg,abc,acd,abd$ & $\{[5,-\infty),[5,3)\}$ & $\emptyset$ & $\{[1,-\infty)\}$\\
\end{tabular}
}
\caption{Table of barcodes for weighted nerve persistence across a variety of hypergraphs sampled from examples in Table~\ref{tab:intuition} (row numbering agrees).} %\cLZ{We didn't actually explicitly do rows 2* (I did this myself, but someone can check), 12, 13, 15, 16, 17, 18, 20, 21, or 22. Are we confident that 9-18 and 19-21 all have the same homology? I think maybe yes, but I'm not positive. I don't know what it is for 22. We also compute additional examples $\{abcde\}$ not included as a row of the table.}}
\label{tab:intuitionBarCode}
\end{table}

\subsection{Adding super-edges}
While adding sub-edges can never change the hyperblock of the hypergraph, there are two types of super-edges one could consider. 
In one case, you could add a super-edge that contains one or more toplexes; this would change the hyperblock of the hypergraph.
Alternatively, you could add an edge $g$ that is a super-edge of a non-toplex and a sub-edge of a toplex; but this is also adding a sub-edge and would keep the hypergraph in the same hyperblock. We focus here on the former case rather than the latter.

We note that adding super-edges that contain one or more toplexes is somehow the reverse of adding sub-edges; therefore, much of what was said in the prior section has implications in this section. For example, in the case of ResBS, if your hypergraph consists of two non-comparable hyperedges, $e$ and $f$, and you add two new hyperedges, $g$ and $h$, together with two new vertices, $x$ and $y$, such that $g=e\cup f \cup \{x\}$ and $h=e \cup f \cup \{y\}$, then you will create a 1-dimensional cycle in $\bary_{res}(\H)$. Of course, this is not the only way to add super-edges; in fact, it is quite specific and requires the addition of two super-edges (and two vertices). In general, adding a single edge has the effect of ``coning off'' any topological structure contained in the ResBS of the set of edges it contains. 

The example in the prior section for weighted nerve persistent homology is interesting in the case of adding super-edges as well. The barcode for the hollow tetrahedron hypergraph $\{abc, abd, acd, bcd\}$ (without singleton and pairwise edges, Row 9) is $\PH_0=\{[3,2)\times 3, [3,-\infty)\}$, $\PH_1=\{[2,1] \times 3\}$, and $\PH_2=\{[1,-\infty)\}$. When $\{abcde\}$ is added (Row 19), the simplex corresponding to this large edge connects to all of the sub-edges which pulls together the components that start at $k=3$. This super-edge also kills the dimension 1 features, but as noted above it does not kill the dimension 2 feature. 

For the RelBS, Potvin again proved some helpful theorems. He introduces the notion of a \emph{maximum edge hypergraph} which is one that contains an edge $e=V$. There may be many (or few) other hyperedges, but it at least has one containing all vertices. In this case, he proves that $\hom{rel}{n}(\H) \cong \hom{\Delta}{n-1}(\missing(\H))$ for $n \geq 2$, where we recall that $\missing(\H)$ is the missing subcomplex \cite[Theorem 4.2.3]{potvin2023hypergraphs}. This does not tell us how the structure changes with and without the maximum edge, but it does tell us how to compute the RelBS homology when a maximum edge is added.

Even less is known about how adding super-edges affects the path, polar complex, and embedded homologies. We encourage readers to consider studying these theories to help develop more understanding.

\subsection{Duality}
Duality is an operation that does not modify the hypergraph, but exchanges the roles of vertices and hyperedges. As we observed in Section \ref{sec:closure-homology}, the closure homology of a hypergraph is isomorphic to the closure homology of its dual by Dowker duality. One might ask the question: is it true in general that the $X$-homology of a hypergraph is isomorphic to the $X$-homology of its dual (where $X$ is any of the homology theories introduced in this survey, or others developed in the future)? In fact, this is not generally true, and our running example in Figure \ref{fig:rest_bary_ex} is a counterexample for $\hom{res}{}, \hom{rel}{}, \hom{pol}{}$, $\hom{emb}{}$, and for the persistent homology of the weighted nerve.

So, while it is generally not true that the $X$-homology of a hypergraph is isomorphic to the $X$-homology of its dual, one could instead ask the weaker question of whether or not there is any relationship between these two homologies. Perhaps there are some conditions on the hypergraph such that you can achieve an isomorphism. Or there could be a way to bound the difference between the $X$-homology of a hypergraph and the $X$-homology of its dual (perhaps with certain assumptions on hypergraph structure). This provides an interesting open question. However, as noted earlier, duality is not a functor from $\Hyp$ or $\MHyp$ to itself so one might expect that there would be no such relationship. Indeed it could be that for every $n > 0$ you can find an $\H$ such that $|\betti{\bullet}{k}(\H) - \betti{\bullet}{k}(\H^*)| > n$.

For $\PH(\Nrv_w(\H))$, we provide a pathological example to show that the number of bars in $\PH(\Nrv_w(\H^*))$ can be arbitrarily more than the number of bars in $\PH(\Nrv_w(\H))$. Consider our running example from Figure \ref{fig:rest_bary_ex}(a), but instead of the singleton edge with vertex $c$, we inflate that edge to contain $c_1, \ldots, c_n$, as shown in Figure \ref{fig:pathological_wtd_nrv} along with its dual and their weighted nerves. Since the two nerves are homotopy equivalent (without weighting), the number of bars that continue to $-\infty$ must be the same. However, the barcode for $\Nrv_w(\H)$ in dimension 0 consists of one bar $[n+2, -\infty)$ while the barcode for $\Nrv_w(\H^*)$ in dimension 0 has bars $\{[3,-\infty), [3,2), [2,1)\times n\}$. Neither have persistent homology in dimension $k > 0$. By duplicating a vertex arbitrarily many times (equivalently, duplicating an edge), we can make the two barcodes arbitrarily different in number of bars. This raises an open question of whether requiring the hypergraph to be both vertex and edge \emph{collapsed} (i.e., in $\Hyp$ and with no duplicate vertices) allows us to bound or describe a relationship between the persistent homology of the weighted nerve of the hypergraph and that of its dual.

%\begin{figure}
%   \includegraphics[width=0.75\textwidth]{Images/pathological_wtd_nrv.png}
%   \caption{Pathological example for duality and the persistent homology of the weighted nerve.}
%   \label{fig:pathological_wtd_nrv}
%\end{figure}

\begin{figure}[h!]
  \centering
\begin{subfigure}{.45\textwidth}
  \centering
  \resizebox{.9\textwidth}{!}{
  \begin{tikzpicture}[scale=0.8, ve/.style={fill=black,circle,scale=0.5},  he1/.style={draw = blue, line width=0.5mm, rounded corners,inner sep=3pt}]
    %
    %vertices
    \node[ve, color=ForestGreen,label={\textcolor{ForestGreen}{$c_1$}}] (vc1) at (0, 0) {};
    %    \node[ve] (vc1n) at (0, 0.2) {};
  \node[ve, color=ForestGreen,label={\textcolor{ForestGreen}{$c_2$}}] (vc2) at (1, 0) {};

 \node[ve, color=ForestGreen,label={\textcolor{ForestGreen}{$c_n$}}]  (vcn) at (5, 0) {};
%  \node[ve]  (vcnn) at (5, 0.2) {};
\node[ve, color=Brown, label={\textcolor{Brown}{$a$}}] (vA) at (0, -2) {};
\node[ve,color=Purple, label={\textcolor{Purple}{$b$}}] (vB) at (5, -2) {};    
    %hyperedges
\node [he1, color=orange, inner sep=10pt, fit=(vA)] {};
\node [he1, color=teal, inner sep=10pt, fit=(vB)] {};
 \node[color=orange] at (0.8,-2.2) {A};
 \node[color=teal] at (4.2,-2.2) {B};
 
\node [he1, inner sep=11,  fit=(vc1) (vcn)] {};
 \node[color=olive] at (2,0) {...};
 \node[color=blue] at (3,-0.3) {C};
 
\node [he1,draw = cyan,inner sep=17pt, fit=(vc1) (vcn) (vB) (vA)]{};
 \node[color=cyan] at (5.4, -0.9){D};
 \node [he1,draw = red, inner sep=14pt, fit=(vA) (vB)]{};
 \node[color=red] at (2, -1.5){{E}};
\end{tikzpicture}
    }
    \caption{$\H$}
  \label{fig:7TopLeft}
\end{subfigure}
  \begin{subfigure}{.4\textwidth}
  \centering
  \resizebox{0.9\textwidth}{!}{
  \begin{tikzpicture}[scale=0.6, ve/.style={fill=black,circle,scale=0.35},  he1/.style={draw = blue, line width=0.4mm, rounded corners,inner sep=2pt}]

 \node[ve, color=blue] (vC) at (0.8, 0) {};
 \node[color=blue] at (1, -0.35){\small{C}};
 \node[ve, color=orange] (vA) at (-1, -1.5) {};
  \node[color=orange] at (-0.7, -1.5){\small{A}};
 \node[ve, color=cyan] (vD) at (0.5, -2) {};
  \node[color=cyan] at (0.8, -1.9){\small{D}};
  \node[ve, color=red,label={}] (vE) at (2.2, -2) {};
   \node[color=red] at (1.8, -1.9){\small{E}};
\node[ve, color=teal] (vB) at (3.5, -1) {};
 \node[color=teal] at (3.5, -1.4){\small{B}};
 \node [he1, color=Brown, fit=(vA)(vE)] {};
    \node[color=Brown] at (-1, -1){\small{a}};
  \node [he1, draw=Purple,inner sep=5pt, fit=(vD)(vB)] {};
   \node[color=Purple] at (4.2,-1.2) {b};
    \node [he1, draw=ForestGreen,line width=0.6mm,inner sep=8pt, fit=(vD)(vC)] {};
\node[color=blue] at (2.8,0) {\tiny{\textcolor{ForestGreen}{$c_1,c_2,\ldots, c_n$}}};
\end{tikzpicture}
    }
    \caption{$\H^*$}
  \label{fig:7TopRight}
\end{subfigure}
\\
 \begin{subfigure}{.45\textwidth}
  \centering  \resizebox{0.7\textwidth}{!}{
  \begin{tikzpicture}[ve/.style={fill=black,circle,scale=0.5},   he1/.style={draw = blue, line width=0.5mm, rounded corners=5pt,inner sep=3pt}
];
      \draw[line width=0.3mm, color=black,fill=gray!15] (0, -1)--(1, -1.5)--(1, -0.5)--cycle;
      \node[color=black]  at (0.5,-0.6) {\tiny{1}};
      \node[color=black]  at (0.5,-1.4) {\tiny{1}};
      \draw[line width=0.3mm,color=black,fill=gray!15] (2, -1)--(1, -1.5)--(1, -0.5)--cycle;
        \node[color=black]  at (1.5,-0.6) {\tiny{1}};
       \node[color=black]  at (1.5,-1.4) {\tiny{1}};
    \node[ve, color=blue] (v3) at (1, 0) {};
    \node[ve, color=orange] (v1) at (0, -1) {};
   \node[ve, color=teal] (v2) at (2, -1) {};
   \node[color=orange]  at (-0.25,-1){\tiny{1A}};
  \node[color=teal]  at ([shift={(1:0.15)}]v2.10){\tiny{1B}};
   \node[color=blue]  at ([shift={(10:0.25)}]v3){\tiny{nC}};
    \node[ve, color=red] (v12) at (1, -1.5) {};
     \node[color=red, below] at ([shift={(40:0.19)}]v12.10){\tiny{2E}};
 \draw[line width=0.3mm, color=black] (v3)--(v12); 
\node[ve, color=cyan, label={}] (v123) at (1, -0.5) {}; 
\node[color=cyan]  at ([shift={(2:0.45)}]v123.10){\tiny{(n+2)D}};
\node[color=black]  at ([shift={(1:0.5)}]v1.10){\tiny{1}};
\node[color=black]  at ([shift={(1:1.3)}]v1.10){\tiny{1}};
\node[color=black]  at ([shift={(1:1)}]v1.10){\tiny{2}};
\node[color=black]  at (1.1,-0.2) {\tiny{n}};
\end{tikzpicture}}
 \caption{$\Nrv_w(\H)$}
  \label{fig:7DownLeft}
\end{subfigure}
 \begin{subfigure}{.45\textwidth}
  \centering  \resizebox{0.7\textwidth}{!}{
  \begin{tikzpicture}[ve/.style={fill=black,circle,scale=0.4},   he1/.style={draw = blue, line width=0.5mm, rounded corners=5pt,inner sep=3pt}
];
 \draw[line width=0.3mm, color=black,fill=gray!25] (0, -1.5)--(1.3, -1)--(-0.5, -0.2)--cycle;
  \draw[line width=0.3mm, color=black,fill=gray!15] (0, -1.5)--(1.5, -1.5)--(0.3, -0.2)--cycle;
  \draw[line width=0.3mm, color=black,fill=gray!15] (0, -1.5)--(1.5, -1.5)--(1.8, -0.2)--cycle;
  
\node[ve, color=Brown]  at (0,-1.5){};
\node[ve, color=Purple]  at (1.5, -1.5){};
 \node[color=Brown, inner sep=5pt]  at (-0.25, -1.5){\tiny{3a}};
 \node[color=Purple, inner sep=5pt]  at (1.7, -1.5){\tiny{3b}};
 \node[color=black]  at (0.8,-1.65) {\tiny{2}};
 
\node[ve, color=ForestGreen] (v1) at (-0.5, -0.2) {};
\node[black]  at (-0.5, 0){\tiny{\textcolor{ForestGreen}{$2c_1$}}};
\node[ve, color=ForestGreen] (v2) at (0.3, -0.2) {};
\node[black]  at (0.3, 0){\tiny{\textcolor{ForestGreen}{$2c_2$}}};
\node[ve, color=ForestGreen] (v3) at (1.8, -0.2) {};
\node[black]  at (1.8, 0){\tiny{\textcolor{ForestGreen}{$2c_n$}}};

\node[color=black]  at (-.15, -0.55){\tiny{1}};
\node[color=black]  at (0.4, -0.55){\tiny{1}};
\node[color=black]  at (1.55, -0.55){\tiny{1}};  

\node[color=black]  at (-0.4,-0.8) {\tiny{1}};
\node[color=black]  at (0.05,-0.8) {\tiny{1}};
\node[color=black]  at (1.8,-0.8) {\tiny{1}};

\node[color=black]  at (-0.1,-0.3) {\tiny{1}};
\node[color=black]  at (0.6,-0.3) {\tiny{1}};
\node[color=black]  at (1.5,-0.3) {\tiny{1}};
\end{tikzpicture}}
\caption{$\Nrv_w(\H^*)$}
  \label{fig:7DownRight}
\end{subfigure}

 \caption{Pathological example for duality and the persistent homology of the weighted nerve.}
    \label{fig:pathological_wtd_nrv}
\end{figure}
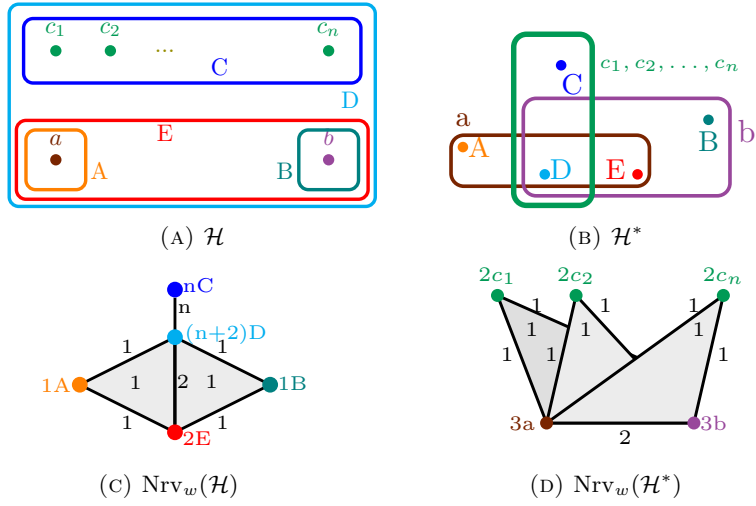

\section{Discussion}
\label{sec:discussion}

This survey provides descriptions and known properties of nine homology theories for hypergraphs introduced and studied in recent years. We also included an intuition building section containing observations that come  from computations on relatively small examples. 
The goal of this survey was to show that the research on homology theories for hypergraphs is active and to collect recent works in one place to observe similarities and differences among them.
In doing so, we have identified  some interesting open problems along the way; many are noted in the sections above.
Beyond those identified above we leave readers with a few closing thoughts of where work in homology theories for hypergraphs could proceed.

\subsection*{Persistence and stability}
In our survey, we include only one persistent homology theory for hypergraphs in Section \ref{sec:weighted}. The weights on the upper closure are induced by the structure of the hypergraph. However, because of the functoriality results, many of the other theories can also be considered in a persistence context through the introduction of any kind of function on hyperedges or vertices. Consider, for example, the ResBS homology. If we have a hypergraph $\H=(V,E)$ with a function on hyperedges $w : E \rightarrow \R$, we can induce a filtration $\H_\alpha = (V, E_\alpha)$ where $E_\alpha = \{e \in E : w(e) \geq \alpha\}$. It is clear that $E_\alpha \subseteq E_\beta$ whenever $\alpha \leq \beta$ and so $\H_\alpha \rightarrow \H_\beta$ is a morphism in $\Hyp$ (or $\MHyp$). By functoriality of $\hom{res}{\bullet}$, $\{\H_\alpha \hookrightarrow \H_\beta\}$ induces a persistence module. This opens up many new questions like stability to perturbations of the function and interpretation of persistent homology for all these theories. 

\subsection*{Characterizing structure through homology}
One reason that a practitioner may want to study homology of a hypergraph is to get an understanding of some aspect of hypergraph structure that traditional hypernetwork sciences measures cannot quantify, for example to identify complex substructures. In \cite{jenne2023stepping}, the authors propose that homology for hypergraphs, specifically for the ResBS, can generalize graph motifs to hypergraphs in some way. The straightforward generalization of motifs from graphs to hypergraphs results in a combinatorial explosion (there are already nearly 2,000 hypergraph motifs with only 4 hyperedges \cite{lee2020hypergraph}), but they argue that allowing small patterns to be unified if they have the same homological structure could be a way of combating that explosion. Understanding what kinds of small substructures are captured by each homology theory would be an interesting direction to pursue with potential application to hypergraph classification as in \cite{milo2002network} for graphs.

\subsection*{Practical applications}
Though it was noted that homology of hypergraphs could be useful for understanding structure of real world hypergraphs, there has been little work in that direction outside of the embedded homology work cited in Section \ref{sec:embedded} and \cite{aktas2023hypergraph}. A study in which each of these homology theories are computed and evaluated for hypergraphs coming from different application domains---e.g., biology, computer network data, collaborations---would be extremely valuable to the community. Achieving this requires developing additional computational tools. We believe that they should be released as open source, or incorporated into existing open source projects, in order for the community to take advantage. We point to projects like HyperNetX and Open Applied Topology\footnote{\url{https://openappliedtopology.github.io/}} as potential packages where these tools could be incorporated.

%\assn{later}{Things to include: if one has a function on a hypergraph then all these notions could turn into persistent homology and then stability wrt the function could be discussed; any open problems we're aware of in these different homology theories (we didn't cover any generic topological spaces and singular homology; approximate duality results, ...).}

\section*{Acknowledgements}

The authors thank Robby Green, Clara Buck, Peter Bubenik, and Christopher Potvin for their helpful insight into concepts and proofs within this paper. In particular, we thank Robby Green for a helpful discussion about the proofs of Propositions~\ref{prop:closure_functor} and \ref{prop:bary_res_functor} and Peter Bubenik for the example in Row 19 of Table \ref{tab:intuition}.
We thank Kyle Koyanagi for reaching out and pointing us to his path homology code.
We also thank the anonymous reviewers for their insightful comments and questions that significantly improved this paper.

We are grateful for the Women in Computational Topology (WinCompTop) workshop for initiating our research collaboration. In particular, participant travel support was made possible through the grant National Science Foundation (NSF) DMS-1619908. This collaborative group was also supported by the American Institute of Mathematics (AIM) Structured Quartet Research Ensembles (SQuaRE) program, and the Institute for Mathematics and its Applications (IMA). This work was partially based upon work supported by the NSF under Grant No. DMS-1929284 while the authors were in residence at the Institute for Computational and Experimental Research in Mathematics in Providence, RI, during the Persistent Topology of Hypergraphs Collaborate@ICERM.

RS was partially supported by the NSF grant DMS-1854705.  BW was supported in part by grants from the NSF (DMS-2301361) and Department of Energy (DoE) (DE-SC0021015). YW was supported in part by grants from the NSF (CCF-2310411 and CCF-2112665). LZ was supported in part by grants from the NSF (CDS\&E-MSS-1854703 and BCS-2318171).

\bibliographystyle{plain}
\bibliography{hyperhom}% common bib file

\end{document}